\newcommand{\hyperbox}{\mathcal{H}}
\newcommand{\F}{\mathcal{F}}
\newcommand{\C}{\mathcal{C}}
\newcommand{\hfbold}{\mathbf{HF}}
\newcommand{\hfboldinfty}{\mathbf{HF}^\infty}
\newcommand{\fpowers}{\mathbb{F}[[U,U^{-1}]}
\newcommand{\fexterior}{\Lambda^*_{\mathbb{F}}}
\newcommand{\cinfty}{C^\infty}
\newcommand{\hcinfty}{HC^\infty}
\newcommand{\proj}{\mathcal{I}}
\newcommand{\destab}{\mathcal{D}}
\newcommand{\hm}{\overline{HM}}
\newcommand{\boldalpha}{\mbox{\boldmath $\alpha$}}
\newcommand{\boldbeta}{\mbox{\boldmath $\beta$}}
\theoremstyle{plain} \newtheorem{theorem}{Theorem}[section]
\theoremstyle{plain} \newtheorem{proposition}[theorem]{Proposition}
\theoremstyle{plain} \newtheorem{lemma}[theorem]{Lemma}
\theoremstyle{plain} \newtheorem{corollary}[theorem]{Corollary}
\theoremstyle{plain} 
\theoremstyle{definition} \newtheorem{definition}[theorem]{Definition}
\theoremstyle{remark} \newtheorem{remark}[theorem]{Remark}
\theoremstyle{remark} \newtheorem{example}[theorem]{Example}
\title{Heegaard Floer Homology and Triple Cup Products}
\date{}
\author{Tye Lidman}
\begin{document}
\maketitle

\abstract{We give a complete calculation of $HF^\infty(Y,\mathfrak{s})$ with mod 2 coefficients for all three-manifolds $Y$ and torsion Spin$^c$ structures $\mathfrak{s}$.  The computation agrees with the conjectured calculation of Ozsv\'ath and Szab\'o in \cite{plumbed}.  This therefore establishes an isomorphism with Mark's cup homology, $\hcinfty(Y)$, mod 2 \cite{thommark}.

\section{Introduction}

Throughout the last decade, Heegaard Floer homology has been a ubiquitous tool in low-dimensional topology.  Variants of this theory exist for nearly any object one can find a use for and have been studied in a wide range of contexts, such as embeddings of symplectic surfaces in four-manifolds, lens space surgery obstructions, and the classification theory of tight contact structures.  In many cases, these invariants can be calculated completely combinatorially, making them accessible and desirable to utilize.

For closed three-manifolds, the Heegaard Floer chain complexes come in many flavors, $\widehat{CF}, CF^+, CF^-$, and $CF^\infty$.  These flavors are all in fact derived from $CF^\infty$ by some formal construction on the chain level.  Therefore, having an understanding of the homology, $HF^\infty$, provides foundational information for the other flavors; for example, the $\mathbb{Z}$-rank of $\widehat{HF}(Y)$ is always bounded below by the $\mathbb{Z}[U,U^{-1}]$-rank of $HF^\infty(Y)$.

In \cite{hfpa}, Ozsv\'ath and Szab\'o calculate $HF^\infty(Y,\mathfrak{s})$ for all $Y$ with $b_1(Y) \leq 2$ and all Spin$^c$ structures $\mathfrak{s}$.  It is also shown that there is a universal coefficients spectral sequence for torsion Spin$^c$ structures, with $E_3$ term given by $\Lambda^*(H^1(Y;\mathbb{Z})) \otimes \mathbb{Z}[U,U^{-1}]$, which converges to $HF^\infty$.  They conjecture in \cite{plumbed} that the $d_3$ differential is given by contraction via the integral triple cup product form, $\mu_Y$, and that all higher differentials vanish.  This agrees with our previous calculations of $HF^\infty$ extended to $b_1(Y) \leq 4$ for coefficients in $\mathbb{F} = \mathbb{Z}/2\mathbb{Z}$ \cite{hfcoho}.  These computations immediately extend to the case of $\hfboldinfty$, where $\hfboldinfty$ is defined to be $HF^\infty$ with mod 2 coefficients, completed with respect to the variable $U$.  Throughout this paper, all Floer homologies will be calculated with mod 2 coefficients, unless noted otherwise.

In \cite{thommark}, Mark constructs a complex over $\mathbb{Z}[U,U^{-1}]$, $\cinfty_*(Y)$, with chain group $\Lambda^*(H^1(Y;\mathbb{Z})) \otimes \mathbb{Z}[U,U^{-1}]$; in fact, $\cinfty_*(Y)$ is exactly the conjectured complex $(E_3,d_3)$ as mentioned above.  For compatibility, we will work with $\mathbb{F}$ coefficients for $\cinfty_*$ as well, where triple cup products are taken in integral cohomology and then reduced mod 2.

In this paper, we use the link surgery formula of Manolescu and Ozsv\'ath (Theorem 1.1 of \cite{hflz}) to calculate $HF^\infty(Y,\mathfrak{s})$ for all $Y$ and torsion $\mathfrak{s}$ and compare the result with $\hcinfty$.

\begin{theorem} \label{maintheorem}
Let $Y$ be a three-manifold equipped with a torsion Spin$^c$ structure $\mathfrak{s}$.  The relatively-graded $\mathbb{F}[U,U^{-1}]$-modules $HF^\infty_*(Y,\mathfrak{s})$ and $\hcinfty_*(Y)$ are isomorphic.  Therefore, $HF^\infty(Y,\mathfrak{s})$ agrees with Conjecture 4.10 of \cite{plumbed} mod 2.
\end{theorem}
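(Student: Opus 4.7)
The plan is to present $Y$ as integer surgery on a framed link $L \subset S^3$, feed this into the Manolescu--Ozsv\'ath link surgery formula to obtain an explicit chain-level model for $CF^\infty(Y, \mathfrak{s})$, and then match the resulting ``infinity'' hyperbox with Mark's complex $\cinfty_*(Y)$.

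First I would fix a surgery presentation adapted to the torsion Spin$^c$ structure $\mathfrak{s}$. The surgery formula then expresses $CF^-$ of the surgery as the total complex of a hyperbox $\hyperbox$ indexed by sublinks $L' \subseteq L$, whose vertices are large-surgery complexes and whose length-$k$ edge maps arise from polygon counts and the Manolescu--Ozsv\'ath destabilization maps. Inverting $U$ is the key simplification: at the infinity level each vertex complex becomes a free module over $\mathbb{F}[U,U^{-1}]$ with trivial internal differential, so the entire homological content of $\hfboldinfty(Y, \mathfrak{s})$ is encoded in the edges of $\hyperbox$. Next I would identify the underlying graded module of the resulting cube-filtered complex with $\Lambda^*(H^1(Y;\mathbb{Z})) \otimes \mathbb{F}[U, U^{-1}]$, matching the $E_3$ page of the Ozsv\'ath--Szab\'o universal coefficients spectral sequence and thus providing the correct module structure from both sides.

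The main obstacle is reading off the differentials on this model and showing that they agree with the triple cup product contraction differential of $\cinfty_*(Y)$. Length-one edges should vanish on homology (consistent with $d_2 = 0$ in the spectral sequence); the first nontrivial contribution comes from length-two edges, which must be identified with contraction by $\mu_Y$ mod 2; and length-$\geq 3$ edges must collapse. I would handle this by naturality: reduce to model computations on surgery presentations realizing prescribed mod 2 triple linking numbers, check against the already known cases from \cite{hfpa, hfcoho} with $b_1(Y) \leq 4$, and then bootstrap via induction on the number of link components, using that sublinks give intermediate three-manifolds whose cup product structure is controlled by the same linking data. Once the differentials are matched, the isomorphism $HF^\infty_*(Y, \mathfrak{s}) \cong \hcinfty_*(Y)$ follows immediately, and agreement with Conjecture 4.10 of \cite{plumbed} mod 2 is tautological since $\cinfty_*(Y) = (E_3, d_3)$ is by construction the conjectured complex.
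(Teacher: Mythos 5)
Your outline matches the paper's strategy for roughly the first half: use the link surgery formula (after reducing, as in \cite{hfcoho}, to $0$-surgery on a homologically split link so that the torsion Spin$^c$ structure gives a single $\mathfrak{A}^\infty$ complex at each vertex), filter by position in the hypercube, identify $E_1 \cong E_3 \cong \Lambda^*(H^1(Y;\mathbb{Z})) \otimes \fpowers$ using that inclusions and destabilizations are quasi-isomorphisms on $\mathfrak{A}^\infty$, and pin down $d_3$ by checking the $b_1 \leq 3$ base cases and pushing the identification through face inclusions. Incidentally your indexing slips: length-$k$ edge maps induce $d_k$ in this filtration, and contraction by the trilinear form $\mu_Y$ drops exterior degree by $3$, so it is $d_3$ coming from length-three diagonals, not $d_2$ from length-two edges.

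The genuine gap is in how you plan to kill the higher differentials. You propose to ``bootstrap via induction on the number of link components, using that sublinks give intermediate three-manifolds.'' That argument does work for $d_1$, $d_2$, and the identification of $d_3$, precisely because the inclusion of a face subcomplex $\C_j \hookrightarrow \C$ induces an injection on $E_1 = E_2 = E_3$, all of which have the maximal rank $2^{b_1}$. But the moment $d_3$ is nonzero, this injectivity breaks on $E_4$: the $E_4$ page of a sublink no longer injects into $E_4$ of the full link (the paper flags this explicitly, noting there are $b_1=3$ manifolds where $\hfboldinfty$ has rank less than $8$), and one cannot determine $d_{\geq 4}$ on a nontrivial $d_3$-cycle merely by restricting to sublinks. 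This is the crux of the theorem, and naturality plus component-count induction does not close it. The paper's actual mechanism is entirely different: it sets up a second induction on a complexity $c(L)$ counting nonvanishing triple Milnor invariants, decomposes a chosen component as $K = K' \# K''$ so that replacing $K$ by $K'$ or $K''$ lowers $c$, expresses $0$-surgery on $K' \# K''$ via a three-component Kirby diagram, and then does a delicate filtration-and-mapping-cone analysis of the enlarged surgery hypercube to prove the rank inequality $\dim \hfboldinfty \geq \dim \hcinfty$. Without something replacing this --- an argument that produces the lower bound on rank when $d_3$ is already killing classes --- the proof does not go through.
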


\begin{remark}
It is also known in monopole Floer homology that for torsion Spin$^c$ structures, $\hcinfty(Y) \cong \hm(Y,\mathfrak{s})$ over $\mathbb{Q}$ (see Section IX of \cite{monopolebook}); furthermore the announced Main Theorem of Kutluhan, Lee, and Taubes \cite{hfishm} shows $\hm(Y,\mathfrak{s},\mathbb{Z}) = HF^\infty(Y,\mathfrak{s},\mathbb{Z})$.  Thus, Theorem~\ref{maintheorem} is already known with $\mathbb{Q}$-coefficients.
To illustrate the importance of coefficients, we point out that the three-manifolds $M_{2n-1}$ and $M_{2n}$ defined in Example 3.3 of \cite{surgeryequivalence} have isomorphic $\hcinfty(\cdot,\mathbb{Q})$ for $n \geq 1$, but different $\hcinfty(\cdot,\mathbb{F})$ for their unique torsion Spin$^c$ structures \cite{hfcoho}.  In fact, this observation combined with the fact that $\hcinfty(M_{2n-1},\mathbb{F})$ and $\hcinfty(M_{2n-1},\mathbb{Q})$ have equal rank proves that there is always 2-torsion in $HF^\infty(M_{2n},\mathbb{Z})$ when $n \geq 1$.
\end{remark}

In Section 2 of \cite{hflz} it is shown that $\hfboldinfty$ vanishes for non-torsion Spin$^c$ structures.  Since the link surgery formula is proved for $\hfboldinfty$ instead of $HF^\infty$, the methods of this paper cannot be used to calculate $HF^\infty$ for non-torsion Spin$^c$ structures.  However, it is pointed out in the same section that for torsion Spin$^c$ structures, $HF^\infty$ is completely determined by $\hfboldinfty$; in other words, $HF^\infty_i \cong \mathbf{HF}^\infty_i$.  Therefore, for the purposes of this paper, we are content to work with $\hfboldinfty$ when studying torsion $\mathfrak{s}$.  From now on, the coefficients of $\cinfty$ will also be $\fpowers$.

Instead of working with the universal coefficients spectral sequence, a different approach is taken to calculate $\hfboldinfty$.  We will express $\hfboldinfty(Y,\mathfrak{s}_0)$ as the homology of a hypercube of chain complexes by \cite{hflz}; this will be the structure on which we construct a different spectral sequence, this time coming from a filtration more reminiscent of the quantum grading in Khovanov homology.

Given a framed link $(L,\Lambda)$ in $S^3$, Manolescu and Ozsv\'ath construct the link surgery formula: an infinite product of hypercubes of chain complexes over $\fpowers$ whose total complex has homology isomorphic to $\hfboldinfty(S^3_\Lambda(L))$ \cite{hflz}.  This complex is built up from a generalization of the mapping cone construction for integral surgeries of \cite{hfkz}, but applied for each sublink of $L$.

By the work of Section 2 in \cite{hfcoho}, we only need to calculate $\hfboldinfty$ in the case that $Y$ is 0-surgery on certain links with all pairwise linking zero; we call such a link \emph{homologically split}.  For 0-surgery on a homologically split link with $\ell$ components, the associated hypercubes of chain complexes for a fixed Spin$^c$ structure naturally take the shape of a single hypercube, $\{0,1\}^\ell$, in the sense that there is only a single Heegaard Floer complex associated to each vertex; therefore, the complex is finite dimensional over $\fpowers$.  We place a filtration on the complex based entirely on the location in the cube.  This will induce the spectral sequence we would like to study.

For the unique torsion Spin$^c$ structure on such a $Y$, denote the complex for 0-surgery on $L$ coming from the surgery formula by $\C^\infty(L,\Lambda_0,\mathbf{0})$ (see Section 7 of \cite{hflz}), where $\Lambda_0$ represents the 0-framing.  We establish the following, analogous to Conjecture 4.10 of \cite{plumbed}.

\begin{theorem} \label{spectralcalc} Consider a homologically split link $L \subset S^3$.  There is a filtration on $\C^\infty(L,\Lambda_0,\mathbf{0})$ such that the induced spectral sequence has the following properties.  The first two pages have vanishing differentials and $E_3 \cong \Lambda^*(H^1(Y;\mathbb{Z})) \otimes \mathbb{F}[[U,U^{-1}]$.  Furthermore, via this identification $d_3:\Lambda^i(H^1(Y;\mathbb{Z}) \otimes \mathbb{F}) \otimes U^j \rightarrow \Lambda^{i-3}(H^1(Y;\mathbb{Z}) \otimes \mathbb{F}) \otimes U^{j-1}$ is given by

\begin{equation} \label{differential}
\phi^{k_1} \wedge \ldots \wedge \phi^{k_i} \mapsto \iota_{\mu_Y} (\phi^{k_1} \wedge \ldots \wedge \phi^{k_i}),
\end{equation}
where $\mu_Y$ is the integral triple cup product form, $\mu_Y(\phi^{k_1} \wedge \phi^{k_2} \wedge \phi^{k_3}) = \langle \phi^{k_1} \smile \phi^{k_2} \smile \phi^{k_3},[Y] \rangle$, thought of as a 3-form on $H^1(Y;\mathbb{Z})$.  Finally, the higher differentials vanish.

\end{theorem}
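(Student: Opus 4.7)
The plan is to filter $\C^\infty(L,\Lambda_0,\mathbf{0})$ by hypercube degree, setting $F^p = \bigoplus_{|\varepsilon| \geq p} \C^\infty_\varepsilon$, and to analyze the associated spectral sequence page by page. This filtration is adapted to the cube structure: the $E_0$-differential is the sum of internal vertex differentials, and the $d_r$-differential on the $r$-th page comes from the length-$r$ components of the hypercube map, which are counts of holomorphic polygons of bounded complexity. The identification of each page therefore reduces to a local analysis on a face of the cube of bounded dimension, and convergence to $\hfboldinfty(Y,\mathfrak{s}_0)$ is guaranteed by \cite{hflz}.

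For $E_1$, the internal complex at vertex $\varepsilon$ is a completed $\mathfrak{A}^-$-complex summed over Spin$^c$ structures on the relevant sublink complement that are compatible with the torsion class $\mathfrak{s}_0$. Because $L$ is homologically split, I expect this sum to collapse to a single nontrivial Spin$^c$ summand and the homology at each vertex to be a free $\fpowers$-module of rank one. Identifying the generator at $\varepsilon$ with the wedge product of the cohomology classes $\phi^i \in H^1(Y;\mathbb{Z})$ (Poincar\'e dual to the $i$-th meridian) over those $i$ with $\varepsilon_i = 1$ gives $E_1 \cong \Lambda^*(H^1(Y;\mathbb{Z})) \otimes \fpowers$. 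The $d_1$ and $d_2$ differentials arise from length-1 and length-2 cube maps; the homologically split hypothesis should force these to vanish on the rank-one vertex homologies, because the associated triangle and square polygon counts live in Spin$^c$ sectors that do not contribute in the torsion part.

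The technical heart of the proof, and the main obstacle, is the identification of $d_3$ with contraction against the triple cup product $\mu_Y$. The $d_3$ component on a three-dimensional face $\{i,j,k\}$ of the cube depends only on the three-component sublink $L_i \cup L_j \cup L_k$, so the problem localizes to the case $\ell = 3$. There, combining with the Ozsv\'ath--Szab\'o calculation of $HF^\infty$ for $b_1 \leq 2$ in \cite{hfpa} and the author's extension to $b_1 \leq 4$ in \cite{hfcoho}, the coefficient of $d_3$ is pinned down as $\mu_Y(\phi^i \wedge \phi^j \wedge \phi^k)$ by matching with the already known answer. I expect this geometric identification of a higher hypercube map with a cohomological triple product to be the delicate step. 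Finally, the vanishing of $d_r$ for $r \geq 4$ should follow because the spectral sequence converges to $\hfboldinfty(Y,\mathfrak{s}_0)$, whose $\fpowers$-rank is controlled by the small-$b_1$ base cases: any nontrivial higher differential would drop the rank of $E_\infty$ below the known value, giving a contradiction.
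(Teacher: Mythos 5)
Your setup (the $\varepsilon$-filtration by hypercube weight, the identification of each vertex homology with a rank-one free $\fpowers$-module, and the exterior-algebra labeling by meridian duals) matches the paper's Lemma~\ref{d3calc}, and the localization of $d_3$ to three-dimensional faces combined with the $b_1 \leq 3$ calculations of \cite{hfcoho} is also the paper's route. But there are two real gaps.

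First, your reason for $d_1 = d_2 = 0$ is not an argument. You assert that ``the associated triangle and square polygon counts live in Spin$^c$ sectors that do not contribute in the torsion part,'' but this cannot be right as stated: the edge maps $\Phi^{\pm K_i}$ are \emph{quasi-isomorphisms} (Proposition~\ref{vertexiso}), hence manifestly nonzero on homology, and for $\Lambda_0 = 0$ all of them land in the same torsion $\mathbf{s}$. The vanishing of $d_1$ is a cancellation $\Phi^{+K_i}_* + \Phi^{-K_i}_* = 0$ on $E_1$, which is not obvious and which the paper establishes by induction on $\ell$: for $\ell \leq 2$ it is a rank count against Theorem~10.1 of \cite{hfpa}, and for the inductive step one compares the subcomplex $\C_j = \bigoplus_{\varepsilon_j=1}\C^\varepsilon$ (the complex for the sublink $L-K_j$, with its own $\varepsilon$-filtration) and the quotient $\C/\C_j$, transporting via the filtered quasi-isomorphism $\mathcal{S}^{+K_j}$ of Lemma~\ref{pageiso}. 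You need something of this nature, not a Spin$^c$-sector dichotomy.

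Second, and more seriously, your proof that $d_r = 0$ for $r \geq 4$ is circular. You say a nontrivial higher differential ``would drop the rank of $E_\infty$ below the known value.'' But for $b_1 \geq 5$ the rank of $\hfboldinfty(Y,\mathfrak{s}_0)$ is precisely what the theorem is trying to determine; only $b_1 \leq 4$ is independently known. What \emph{is} known at this stage is the inequality of Corollary~\ref{surgeryequivalenceimplies}, $\dim_{\fpowers}\hfboldinfty \leq \dim_{\fpowers}\hcinfty$, which is exactly the wrong direction to rule out higher differentials: an extra $d_r$ would make $E_\infty$ smaller and remain consistent with that bound. Establishing the opposite inequality (\ref{rankinequality}) is the actual content of the second half of the paper. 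The paper's mechanism is to induct on a complexity $c(L)$ counting nonvanishing triple $\bar\mu$-invariants, replace a component $K$ by a connected sum $K' \# K''$ splitting the complexity (Proposition~\ref{splitknots}), realize $0$-surgery on $K' \# K''$ by a three-component Kirby diagram, truncate the resulting surgery complex to a small mapping cone (Proposition~\ref{finalshape}), and then show via Lemma~\ref{gluinglemma} and the computation of the $\Gamma^{\pm K_i}$ maps in a basic system that the relevant rank matches that of $(d^K_3)_*$. None of this is replaceable by a convergence-plus-rank argument without already knowing the answer.
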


The reader familiar with cup homology will immediately recognize the complex $(E_3,d_3)$.

\begin{definition}(Definition 8 of \cite{thommark})
The relatively-graded chain complex $C^\infty_*(S^3_{\mathbf{0}}(L))$ is exactly $(E_3,d_3)$ under the identifications of Theorem~\ref{spectralcalc}.
\end{definition}

The first three differentials in Theorem~\ref{spectralcalc} will be calculated by applying our understanding of $b_1(Y) \leq 3$ from Theorem 10.1 of \cite{hfpa} and Theorem 1.3 of \cite{hfcoho}.  It will be easy to show that this gives upper bounds on the rank of $HF^\infty$ for arbitrary $Y$.  The rest of the paper is then devoted to proving inductively that the higher differentials in the above spectral sequence vanish. The idea is to choose a component $K$ in $L$ and replace it by $K_1 \# K_2$, where 0-surgery on each $(L - K) \cup K_i$ yields a simpler cup product structure.  We will study the link surgery formula for $L$ as a combination of the complexes with each $K_i$ in place of $K$. This will enable us to show that the higher differentials vanish based on knowledge of their vanishing for the simpler links.  From this, we can easily deduce Theorem~\ref{maintheorem} for all three-manifolds.


\begin{remark}
It seems likely that if the link surgery formula could be proven over $\mathbb{Z}$, then the arguments of this paper could be used to prove Theorem~\ref{maintheorem} for $\mathbb{Z}$-coefficients as well. 
\end{remark}

\section*{Acknowledgments} I would like to thank Ciprian Manolescu for his guidance on the link surgery formula and general advice on this problem.

\section{Review of the Link Surgery Formula} \label{surgeryreview}
We assume familiarity with Heegaard Floer homology of three-manifolds and links, as in \cite{hfinvariance}, \cite{hfl}.  We now give a brief overview of the link surgery formula of Manolescu and Ozsv\'ath \cite{hflz}.  

Their machine takes as input a framed link $(L,\Lambda)$ in $S^3$ and outputs a special type of chain complex with homology isomorphic to $\hfboldinfty(Y_\Lambda(L))$.  Recall that $\hfboldinfty$ comes from the complex $\mathbf{CF}^\infty = CF^\infty \otimes \fpowers$.  While we only work with $\hfboldinfty$ in this paper, their surgery formula is done for all flavors of Heegaard Floer homology.

In order to explain the link surgery formula, there is a very large amount of notation and formalism required simply to state the theorem.  Therefore, we will first give a complete description in the case that $L$ is a knot to give a more concrete set-up.  Then we will give the general framework, but with slightly less details.  Everything here will be presented for $\hfboldinfty$, but the same framework applies to the other flavors as well.  Finally, for notation, we use $x \vee y$ to denote $\max\{x,y\}$.

\subsection{Surgery Formula for Knots}
We begin with an oriented knot, $K \subset S^3$.  We will restate (without proof) a well-known formula for $\hfboldinfty(S^3_n(K))$ (compare with Theorem 1.1 of \cite{hfkz}).
Begin with a doubly-pointed Heegaard diagram for $K$, $\hyperbox^K = (\Sigma,\boldalpha,\boldbeta,z,w)$, and a Heegaard diagram for $S^3$, $\hyperbox^\emptyset = (\Sigma,\boldalpha',\boldbeta',w')$, so they have the same underlying surface.  Let's suppose for simplicity that after removing the basepoint $z$, now a Heegaard diagram for $S^3$, that we can relate $\hyperbox^K$ to $\hyperbox^\emptyset$ by a sequence of handleslides and isotopies (this means the isotopies of curves do not cross the basepoint and there are no (de)stabilizations).  Construct a finite sequence of Heegaard diagrams, $\hyperbox^{K,+K}$, which begins at $\hyperbox^K$ with $z$ removed and follows the sequence of isotopies and handleslides, terminating at $\hyperbox^\emptyset$.  We define $\hyperbox^{K,-K}$ analogously for the removal of $w$.

First, define $\mathbb{H}(K) = \mathbb{Z}$ and $\overline{\mathbb{H}}(K) = \mathbb{H}(K) \cup \{-\infty,+\infty\}$; also, let  $\mathbb{H}(\emptyset) = 0$ and $\overline{\mathbb{H}}(\emptyset) = 0 \cup \{-\infty,+\infty\}$.  Note that there are two sublinks of $K$, namely $K$ and $\emptyset$.  If we use $K$ or $+K$, we will mean that it has the induced orientation; $-K$ will refer to the reversed orientation.  Fix $s \in \overline{\mathbb{H}}(K)$ and an oriented sublink, $\vec{M} \subset K$.  Define

\[
p^{\vec{M}}(s) = \left\{
\begin{array} {rl}
+\infty &\text{ if } \vec{M} = +K, \\
-\infty & \text{ if } \vec{M} = -K, \\
s & \text{ if } M = \emptyset.
\end{array}
\right .
\]
Similarly, define $\psi^{\vec{M}}(s) = +\infty$ if $M = K$ and set $\psi^{\emptyset}(s) = s$.

We want to construct two complexes for each $s \in \mathbb{H}(K)$, namely one for $\hyperbox^\emptyset$ and one for $\hyperbox^K$.  The first complex is given by $\mathfrak{A}^\infty(\hyperbox^\emptyset,p^K(s)) = \mathfrak{A}^\infty(\hyperbox^\emptyset,+\infty) = \mathbf{CF}^\infty(\hyperbox^\emptyset)$.
The complex for $\hyperbox^K$ will be more complicated and will actually depend on $s$.
Recall that there is an absolute Alexander grading on $\mathbb{T}_\alpha \cap \mathbb{T}_\beta$ coming from $\hyperbox^K$ which satisfies a homological symmetry about 0 and $A(x) - A(y) = n_z(\phi) - n_w(\phi)$ for any $\phi \in \pi_2(x,y)$.  With this, we define the complex $\mathfrak{A}^\infty(\hyperbox^K,s)$ to have the same chain groups as $\mathbf{CF}^\infty(\hyperbox^K)$, the free module over $\fpowers$ generated by $\mathbb{T}_\alpha \cap \mathbb{T}_\beta$.  It has the differential
\[
\partial(x) = \destab^\emptyset(x) =\sum_{y \in \mathbb{T}_\alpha \cap \mathbb{T}_\beta} \sum_{\phi \in \pi_2(x,y), \mu(\phi)=1} \#(\mathcal{M}(\phi)/\mathbb{R}) \cdot U^{E_s(\phi)} y,
\]
where $E_s(\phi) = (A(x)-s) \vee 0 - (A(y)-s) \vee 0 + n_w(\phi)$.  Notice that as $s$ becomes very positive (respectively negative), $\partial$ is only counting $w$ (respectively $z$).

We would like a way to relate these complexes.  Define the \emph{inclusions}, $\proj^{\pm K}_s : \mathfrak{A}^\infty(\hyperbox^K,s) \rightarrow \mathfrak{A}^\infty(\hyperbox^K,p^{\pm K}(s))$ by
\[
\proj^{\pm K}_{s}(x) = U^{(\pm(A(x) - s)) \vee 0 } x.
\]
This essentially corresponds to removing $z$ or $w$ from the Heegaard diagram, since $s$ is sent to $\pm\infty$.  We set $\proj^{\emptyset}_s$ to simply be the identity.  Recall that we had sequences of Heegaard diagrams, $\hyperbox^{K,\pm K}$, relating $\hyperbox^K$ with $z$ or $w$ removed to $\hyperbox^\emptyset$.  Each isotopy or handleslide induces a chain map between Floer complexes by counting triangles \cite{hfinvariance}.  Composing these induced maps along the sequence results in the \emph{destabilization} maps $\destab^{\pm K}_{p^{\pm K}(s)}: \mathfrak{A}^\infty(\hyperbox^K,p^{\pm K}(s)) \rightarrow \mathfrak{A}^\infty(\hyperbox^\emptyset,\psi^{\pm K}(p^{\pm K}(s)))$.  The composition $\destab^{\vec{M}}_{p^{\vec{M}}(s)} \circ \proj^{\vec{M}}_s$ is denoted $\Phi^{\vec{M}}_s$, and thus $\Phi^{\emptyset}_s(x) = \partial(x)$.

Consider the following complex composed of all the smaller complexes we have built up:
\[
\C^\infty(\hyperbox,n) = \prod_{s \in \mathbb{H}(K)} (\mathfrak{A}^\infty(\hyperbox^K,s) \oplus \mathfrak{A}^\infty(\hyperbox^{\emptyset},\psi^K(s)))
\]
with differential given by
\[
\destab^\infty (s,x) = (s + n, \Phi^{-K}_{\psi^M(s)}(x)) + (s,\Phi^{+K}_{\psi^M(s)}(x)) + (s,\Phi^\emptyset_{\psi^M(s)}(x)),
\]
for $x \in \mathfrak{A}^\infty(\hyperbox^{K-M},\psi^M(s))$.  The $s$ in the first component is simply serving as an index.  Here we are using the convention that $\Phi^{\pm K}_s(x) = 0$ if $x \in \mathfrak{A}^\infty(\hyperbox^{\emptyset},\psi^K(s))$ (in other words, if $x$ is not in the domain).
We now have a slightly altered version of the integer surgery formula for knots (see Theorem 7.5 of \cite{hflz})

\begin{theorem}(Ozsv\'ath-Szab\'o)
The homology of the complex $\C^\infty(\hyperbox,n)$ is isomorphic to $\hfboldinfty(S^3_n(K))$.
\end{theorem}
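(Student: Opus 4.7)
The plan is to derive this theorem directly from the general link surgery formula (Theorem 7.5 of \cite{hflz}), adapted to the $\hfboldinfty$ flavor, by showing that for a one-component link the Manolescu-Ozsv\'ath complex specializes exactly to $\C^\infty(\hyperbox,n)$. The proof then amounts to verifying a sequence of identifications rather than a new construction, and the pieces of $\C^\infty(\hyperbox,n)$ have been introduced precisely to match the general framework.

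First, I would verify that the complexes $\mathfrak{A}^\infty(\hyperbox^K,s)$ defined here, using the exponent $E_s(\phi) = (A(x) - s) \vee 0 - (A(y)-s) \vee 0 + n_w(\phi)$, coincide with the $s$-th $\mathfrak{A}$-complex in the Manolescu-Ozsv\'ath framework. The degenerations $s \to +\infty$ and $s \to -\infty$ recover $\mathbf{CF}^\infty$ computed with $n_w$ or $n_z$ respectively, consistent with the interpretation that sending $s$ to $\pm\infty$ corresponds to forgetting one of the two basepoints. The inclusions $\proj^{\pm K}_s$ with their $U^{(\pm(A(x)-s)) \vee 0}$ weight are the canonical chain maps between these complexes, and the destabilizations $\destab^{\pm K}$ are defined by composing the triangle-counting maps along the prescribed sequence of handleslides and isotopies. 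By the standard invariance theorems for Heegaard Floer homology, these compositions are chain homotopy equivalences between complexes computed from the diagrams $\hyperbox^K$ (with one basepoint removed) and $\hyperbox^\emptyset$.

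Second, I would assemble these pieces into the mapping cone appropriate to a one-component link. The only sublinks are $\emptyset$ and $K$, so the underlying hypercube is a $1$-cube with vertices $\mathfrak{A}^\infty(\hyperbox^K,s)$ and $\mathfrak{A}^\infty(\hyperbox^\emptyset,\psi^K(s))$, and edge maps $\Phi^{+K}$ and $\Phi^{-K}$. The $s$-shift in the $-K$ component of $\destab^\infty$ encodes the framing $n$: this is precisely the mechanism by which the framing $\Lambda$ enters in Theorem 7.5 of \cite{hflz}, and for a single knot with framing $n$ it reduces to $s \mapsto s+n$. Taking the product over $s \in \mathbb{H}(K) = \mathbb{Z}$ then yields the total complex, whose homology is $\hfboldinfty(S^3_n(K))$ by the general theorem; the product rather than a direct sum is essential for the $\hfboldinfty$ flavor because it encodes the $U$-completion.

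The main obstacle, and the part requiring the most care, is reconciling the destabilization maps used here with those in \cite{hflz}: the construction above relies on the simplifying assumption that $\hyperbox^K$ and $\hyperbox^\emptyset$ differ only by isotopies and handleslides (no stabilizations), whereas the general framework allows arbitrary Heegaard moves. One must therefore check that such a compatible sequence of Heegaard moves always exists, so the simplification is without loss of generality, and that the resulting destabilization maps agree up to chain homotopy with those produced by the general construction. Once this is settled, the theorem follows formally from Theorem 7.5 of \cite{hflz}.
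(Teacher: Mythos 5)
The paper explicitly states that this theorem is ``restated (without proof)'' and describes the presented complex as a ``slightly altered version of the integer surgery formula for knots (see Theorem~7.5 of \cite{hflz}),'' so there is no proof in the paper to compare against. Your proposal --- specializing the general link surgery formula of \cite{hflz} to a one-component link, identifying the $\mathfrak{A}^\infty$ complexes, inclusions, and destabilizations with the knot-case objects defined in the paper, and observing that the $s$-shift by $n$ in the $-K$ component realizes the framing --- is exactly the route the paper's citation implies, and your flagged concern about the no-(de)stabilization assumption on the Heegaard moves is the one genuine point that a careful writeup would need to discharge (it is handled in \cite{hflz} by working with complete systems of hyperboxes, which absorb the necessary sequences of Heegaard moves). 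The approach is correct and consistent with what the paper asserts.
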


\subsection{Spin$^c$ Structures}
We now generalize the construction above to arbitrary framed links.  For simplicity, we will assume that Heegaard diagrams for links have exactly one $z$ basepoint for each component, but may (and will) have additional $w$ basepoints in the diagram not on any component of the link.  Also, we will ignore all details about admissibility of the Heegaard diagrams as well (see Section 4 of \cite{hflz}).

The starting point will be an oriented link $\vec{L}$ in $S^3$ with components $K_1,\ldots,K_n$ and a framing $\Lambda$ telling us how to perform surgery on $L$.  The framing $\Lambda$ will be given as the linking matrix of the resulting 3-manifold after surgery; diagonal entries are the surgery coefficients and the off-diagonal entries are the pairwise linking numbers of the components.  Note that we may think of the row-vectors $\Lambda_i$ as elements in $H_1(S^3-L)$.  When we are considering oriented sublinks, $\vec{M}$ will refer to an arbitrary orientation, while $M$ with no vector decoration will indicate that $M$ has the orientation induced from $L$.

As in other Heegaard Floer homologies, we want to see where the Spin$^c$ structures appear in our theory.  It will be necessary to also relate the relative Spin$^c$ structures defined on $S^3 - L$ to those on $S^3 - M$ for sublinks $M \subset L$.

Define the affine space $\mathbb{H}(L)= \bigoplus^n_{i=1} \mathbb{H}(L)_i$, where
\[
\mathbb{H}(L)_i = \frac{lk(K_i,L-K_i)}{2} + \mathbb{Z}.
\]

It is not hard to see that as lattices Spin$^c(S^3_{\Lambda}(L)) \cong \mathbb{H}(L)/\Lambda$ (where $/\Lambda$ means quotienting out by the action of each row-vector of $\Lambda$, $\Lambda_i$, on the lattice); it turns out that such an identification can be made explicitly.  Therefore, we will often refer to Spin$^c$ structures as equivalence classes $[\mathbf{s}]$.  We extend these lattices to $\overline{\mathbb{H}}(L)_i = \mathbb{H}(L)_i \cup \{+\infty,-\infty\}$ and $\overline{\mathbb{H}}(L) = \oplus^n_{i=1} \overline{\mathbb{H}}(L)_i$.

Let $I_+(\vec{L},\vec{M})$ be the set of indices of components of $M$ which are consistently oriented with $L$.  The remaining components of $M$ form $I_-(\vec{L},\vec{M})$.  We define the maps $p^{\vec{M}}_i:\mathbb{H}(L)_i \rightarrow \mathbb{H}(L)_i$ by
\[
p^{\vec{M}}_i(s_i) = \left\{
\begin{array}{rl}
+\infty & \text{ if } i \in I_+(\vec{L},\vec{M}), \\
-\infty & \text{ if } i \in I_-(\vec{L},\vec{M}), \\
s_i & \text{ otherwise.}
\end{array}
\right.
\]

We can then apply restrictions $p^{\vec{M}}(\mathbf{s}) = (p^{\vec{M}}_1(s_1),\ldots,p^{\vec{M}}_n(s_n))$.  This will allow us to remove the components of $M$, but still keep track of Spin$^c$ structures consistently.

By viewing $\mathbb{H}(L)$ as an affine space over $H_1(S^3 - L)$ we can define the map $\psi^{\vec{M}}: \mathbb{H}(L) \rightarrow \mathbb{H}(L-M)$ by $\psi^{\vec{M}}(\mathbf{s}) = \mathbf{s} - [\vec{M}]/2$.  In other words, we ignore the components of $\mathbf{s}$ coming from $\vec{M}$, but we must change the remaining components based on their linking with the components of $M$.  We extend this to go from $\overline{\mathbb{H}}(L)$ to $\overline{\mathbb{H}}(L-M)$ in the obvious way.

With this we can define a new Heegaard Floer complex for each choice of $\mathbf{s}$.  Begin with a Heegaard diagram for $L$, $\hyperbox^L$.  Recall that there is an Alexander grading for each component of $L$ on $\mathbb{T}_\alpha \cap \mathbb{T}_\beta$, again  given by making absolute the relative grading $A_i(x) - A_i(y) = n_{z_i}(\phi) - n_{w_i}(\phi)$ (we require the Alexander grading of link Floer homology to be symmetric about 0).

For each $\mathbf{s_0} \in \overline{\mathbb{H}}(L)$ and each $M \subset L$, we will define the complex $\mathfrak{A}^\infty(\hyperbox^{L-M},\psi^M(\mathbf{s_0}))$.  For notation, set $\mathbf{s} = \psi^M(\mathbf{s_0})$.  The chain groups will all be the same, freely generated over $\mathbb{F}[[U_1,\ldots,U_n,U_1^{-1},\ldots,U_n^{-1}]$ by $\mathbb{T}_\alpha \cap \mathbb{T}_\beta$; here $n$ is the number of $w$ basepoints.  The differential will be defined by $D^0 = \partial : \mathfrak{A}^\infty(\hyperbox^{L-M},\psi^M(\mathbf{s_0})) \rightarrow \mathfrak{A}^\infty(\hyperbox^{L-M},\psi^M(\mathbf{s_0}))$, which is given by
\begin{equation*}
\partial(x) = \sum_{y \in \mathbb{T}_\alpha \cap \mathbb{T}_\beta} \sum_{\phi \in \pi_2(x,y), \mu(\phi)=1} \#(\mathcal{M}(\phi)/\mathbb{R}) \cdot U_1^{E^1_{s_1}(\phi)} \ldots U_n^{E^n_{s_n}(\phi)} y,
\end{equation*}
where
\begin{equation*}
E^i_{s_i}(\phi)=(A_i(x)-s_i) \vee 0 - (A_i(y)-s) \vee 0 + n_{w_i}(\phi) .
\end{equation*}
If $s_i$ is very positive (or negative), then these counts are again just $n_{w_i}(\phi)$ (or $n_{z_i}(\phi)$); also, we must use the convention that $\infty - \infty = 0$ so this is consistent when $s_i= -\infty$.  Therefore, setting some $s_i$ to $+\infty$ is the same thing as forgetting the $i$th component of the link and having an additional basepoint $w_i$.

\subsection{Complete Systems of Hyperboxes}
\begin{definition} An $n$\emph{-dimensional hyperbox of size }$\mathbf{d}=(d_1,\ldots,d_n) \in \mathbb{N}^n$ is the following subset of $\mathbb{N}^n$

\[
\mathbb{E}(\mathbf{d}) = \{\varepsilon = (\varepsilon_1,\ldots,\varepsilon_n)| 0 \leq \varepsilon_i \leq d_i \}
\]
If $\mathbf{d} = (1,\ldots,1)$, then $\mathbb{E}(\mathbf{d})$ is a \emph{hypercube}.  The \emph{length} of $\varepsilon$, $\|\varepsilon\|$, is given by $\sum_i \varepsilon_i$.  The elements of $\mathbb{E}(\mathbf{d})$ are called \emph{vertices}.
\end{definition}

Also, there is a natural partial order on $\mathbb{E}(d)$ given by $\varepsilon \leq \varepsilon'$ if and only if $\varepsilon_i \leq \varepsilon_i'$ for all $i$.  Two vertices in the hyperbox are \emph{neighbors} if they differ by an element of $\{0,1\}^n$.  The important example to keep in mind is given by the $n$-dimensional hypercube determined by the set of sublinks of the $n$-component link $L$.  We identify the sublinks of $L$ with the vertices of $\{0,1\}^n$ by setting $\varepsilon(M)_i$ to be 1 if $K_i \subset M$ and 0 otherwise.

\begin{definition}
An $n$-\emph{dimensional hyperbox of chain complexes of size }$\mathbf{d}$ is a collection of chain complexes $(\C_*^\varepsilon,D^0_\varepsilon)$ for $\varepsilon \in \mathbb{E}(\mathbf{d})$ equipped with additional operators $D^{\varepsilon'}_{\varepsilon}:\C_*^\varepsilon \rightarrow \C_{*+\|\varepsilon'\|-1}^{\varepsilon+\varepsilon'}$, for $\varepsilon' \neq 0$ in $\{0,1\}^n$; the operators are 0 if $\varepsilon+\varepsilon'$ is no longer in the hyperbox.  These operators are required to satisfy the following relation for all $\varepsilon' \in \{0,1\}^n$:

\[
\sum_{\gamma \leq \varepsilon'} D_{\varepsilon+\gamma}^{\varepsilon'-\gamma} \circ D_\varepsilon^\gamma = 0
\]
\end{definition}

The way to think of this is that the $D^{\varepsilon'}$ are chain maps when $\|\varepsilon'\| = 1$ and chain homotopies for $\|\varepsilon'\| = 2$.  The higher maps are chain homotopies of chain homotopies, etc.

This can be made into a total complex as
\[
(\C_* = \bigoplus_\varepsilon C^\varepsilon_{*+\|\varepsilon\|}, D = \sum_{\varepsilon,\varepsilon'} D_{\varepsilon'}^\varepsilon)
\]

We will omit the subscript notation from the $D$ from now on, where it will just be assumed that the map is 0 if the relevant domains don't match up.  Furthermore, $\partial$ will denote $D^0$ at any vertex of the hyperbox.

Let $(\Sigma,\boldalpha,\mathbf{z},\mathbf{w})$ be a Heegaard diagram for a handlebody, with basepoints $\mathbf{z} = \{z_1,\ldots, z_n\}$ and $\mathbf{w} = \{w_1,\ldots,w_n\}$ on $\Sigma - \boldalpha$.

We will assume that all bipartition functions send everything to $\boldbeta$, so we will not worry about defining $\alpha$-hyperboxes or keeping track of bipartition functions.  We will ultimately work with a basic system, so this assumption will not be a problem (see Section 6.7 of \cite{hflz}).

\begin{definition}
An \emph{empty $\beta$-hyperbox of size} $\mathbf{d}$, $\hyperbox$, is a collection of isotopic sets of $\boldbeta$-curves on $\Sigma-\mathbf{z}-\mathbf{w}$, $\{\boldbeta^\varepsilon\}_{\varepsilon \in \mathbb{E}(\mathbf{d})}$.  A \emph{filling} of $\hyperbox$ is a choice of elements $\Theta_{\varepsilon,\varepsilon'} \in \mathfrak{A}^\infty(\mathbb{T}_{\beta_\varepsilon},\mathbb{T}_{\beta_{\varepsilon'}},\mathbf{0})$ for any neighbors $\varepsilon < \varepsilon'$.  These are required to satisfy equation (50) in \cite{hflz}, namely summing over the polygon maps associated to each possible sequence $\Theta_{\varepsilon_1,\varepsilon_2}, \Theta_{\varepsilon_2,\varepsilon_3}, \ldots \Theta_{\varepsilon_{l - 1},\varepsilon_l}$ in the Heegaard multiple $(\Sigma,\boldalpha,\boldbeta_{\varepsilon_1},\ldots,\boldbeta_{\varepsilon_l})$, is identically 0.  If $\|\varepsilon - \varepsilon'\| = 1$, $\Theta_{\varepsilon,\varepsilon'}$ must also correspond to a cycle generating the top-dimensional homology group of $\widehat{\mathfrak{A}}(\mathbb{T}_{\beta_\varepsilon},\mathbb{T}_{\beta_{\varepsilon'}},\mathbf{0})$ (where $\widehat{\mathfrak{A}}$ is given by setting one $U_i$ to 0).    
\end{definition}

Therefore, for us a \emph{hyperbox of Heegaard diagrams for $L$} is simply a set of $\boldalpha$-curves and an empty $\boldbeta$-hyperbox equipped with a choice of filling such that each $(\Sigma,\boldalpha,\boldbeta_\varepsilon,\mathbf{z},\mathbf{w})$ is a Heegaard diagram for $L$.

\begin{remark} Given a fixed $\mathbf{s} \in \mathbb{H}(L)$, we can create a hyperbox of chain complexes from a hyperbox of Heegaard diagrams as follows: for each $\varepsilon \in \mathbb{E}(\mathbf{d})$ we set $(\C^{\varepsilon(M)}_{\mathbf{s}},D^0)$ to be $\mathfrak{A}^\infty(\mathbb{T}_\alpha,\mathbb{T}_{\beta_{\varepsilon(M)}},\psi^{M}(\mathbf{s}))$.  If $\|\varepsilon' - \varepsilon \| = 1$, then the chain map $D^{\varepsilon' - \varepsilon}_{\varepsilon}$ consists of counting triangles in the Heegaard triple $(\boldalpha,\boldbeta_\varepsilon,\boldbeta_{\varepsilon'})$ with fixed generator $\Theta_{\varepsilon,\varepsilon'}$.  The higher homotopies are defined similarly; we sum up the corresponding holomorphic polygon counts over a specified sequence of the $\Theta$ elements in the Heegaard multiple $(\Sigma,\boldalpha,\boldbeta_{\varepsilon},\ldots,\boldbeta_{\varepsilon'})$.
\end{remark}

It is a lemma of Manolescu and Ozsv\'ath that any empty $\boldbeta$-hyperbox admits a filling and thus every empty $\boldbeta$-hyperbox can be made into a hyperbox of Heegaard diagrams.

Given an $m$-component sublink $\vec{M} \subset L' \subset L$ and a hyperbox of Heegaard diagrams $\hyperbox$ for $L'$, we construct a new hyperbox $r_{\vec{M}}(\hyperbox)$.  This is defined as follows.  Remove the $z_i$ on components of $I_+(\vec{L},\vec{M})$ from each Heegaard diagram in $\hyperbox$; remove the $w_i$ that correspond to components of $I_-(\vec{L},\vec{M})$ and relabel them as $z_i$.  Note that this is now a hyperbox for $L'-M$.

\begin{definition} A \emph{hyperbox for the pair} $(\vec{L'},\vec{M})$, $\hyperbox^{\vec{L'},\vec{M}}$ is an $m$-dimensional hyperbox of Heegaard diagrams for $\vec{L'}-M$.
\end{definition}

Let's study some special cases.  If $M = \emptyset$, then a hyperbox for the pair $(\vec{L'},\emptyset)$ is a single Heegaard diagram, which we denote by $\hyperbox^{\vec{L'}}$.
If $M$ is a single component $K$, then we have $\hyperbox^{\vec{L'},\pm K}$ is a one-dimensional hyperbox, or in other words, a finite sequence of Heegaard diagrams.  For the integer surgeries formula, this related $\hyperbox^K$ with $z$ or $w$ removed to $\hyperbox^\emptyset$; this is exactly the idea that we would like to keep in mind.  This box is going to tell us how to define the maps analogous to $\destab^{\pm K}$.

Given a sublink, $M' \subset M$, there is a hyperbox for $(\vec{L}-M',\vec{M}-M')$ inside of the size $\mathbf{d}$ $\hyperbox^{\vec{L},\vec{M}}$.  This hyperbox, $\hyperbox^{\vec{L},\vec{M}}(M',M)$, is given by the sub-hyperbox with specified corners $\mathbf{d} \cdot \varepsilon(M')$ and $\mathbf{d} \cdot \varepsilon(M)$ (here we are doing componentwise multiplication).

For knots, we simply pointed out that for large $|s|$, $\mathfrak{A}^\infty(\hyperbox^K,s)$ behaves as though there is either no $z$ or no $w$ basepoint and can be compared to $\mathfrak{A}^\infty(\hyperbox^\emptyset,\psi^K(s))$.  We now state the analogous requirement for comparing hyperboxes with certain basepoints removed.
Say that two hyperboxes are \emph{compatible} if $\hyperbox^{\vec{L},\vec{M}}(\emptyset,M') \cong r_{\vec{M}-M'}(\hyperbox^{\vec{L},\vec{M'}})$ for $M'$ a sublink consistently oriented with $\vec{M} \subset L$.  Similarly, $\hyperbox^{\vec{L},\vec{M}}$ and $\hyperbox^{\vec{L}-M',\vec{M}-M'}$ are compatible if $\hyperbox^{\vec{L},\vec{M}}(M',M) \cong \hyperbox^{\vec{L}-M',\vec{M}-M'}$.  Here, the relation `$\cong$' means that the hyperboxes of Heegaard diagrams are related by a single isotopy.
In other words, there is a single isotopy of $\Sigma$ not passing any curves over basepoints, independent of $\varepsilon$, which takes the Heegaard diagrams at vertex $\varepsilon$ on one hyperbox to the Heegaard diagram at vertex $\varepsilon$ on the other.

\begin{definition}
A \emph{complete system of hyperboxes for $L$ } is a collection of hyperboxes for each pair $(\vec{L'},\vec{M})$, $\hyperbox^{\vec{L'},\vec{M}}$, such that for any sublink of $M'$ with orientation induced by $\vec{M}$, $\hyperbox^{\vec{L'},\vec{M}}$ is compatible with $\hyperbox^{\vec{L'} - M',\vec{M}-M'}$ and $\hyperbox^{\vec{L'},\vec{M'}}$.
\end{definition}
Manolescu and Ozsv\'ath construct complete systems of hyperboxes for any oriented link in $S^3$.

\begin{remark} There is an additional technical condition that must be satisfied to be a complete system in the sense of Manolescu and Ozsv\'ath (Definitions 6.25 and 6.26 in \cite{hflz}): it essentially says that the paths traced out by the basepoints on the Heegaard surfaces while passing between the different isotopies of diagrams in the hyperboxes must be nullhomotopic.  This will not be a problem with the special types of complete systems we will work with, so we do not mention this anymore.
\end{remark}

\subsection{The Surgery Complex}
Given a complete system of hyperboxes of Heegaard diagrams for an $n$-component link $L$, $\hyperbox$, we would like to turn
\[
\C^\infty(\hyperbox,\Lambda) = \prod_{\mathbf{s} \in \mathbb{H}(L)} \sum_{M \subset L} \mathfrak{A}^\infty(\hyperbox^{L-M},\psi^M(\mathbf{s}))
\]
into an $n$-dimensional hypercube of chain complexes analogous to the case for knots.  We will set the chain complex at the vertex $\varepsilon(M)$ to be
\[
\C^{\varepsilon(M)} = \prod_{s \in \mathbb{H}(L)} \mathfrak{A}^\infty(\hyperbox^{L-M},\psi^M(\mathbf{s}))
\]
with the differential given by the product of the component-wise differentials.  While these chain complexes do not depend on $\Lambda$, the $D^\varepsilon$ that we will ultimately construct will depend heavily on this choice.

We now want to define the analogue for the maps $\Phi^{\pm K}$ relating the $\mathfrak{A}^\infty$ complexes.  We will construct a map from $\mathfrak{A}^\infty(\hyperbox^{M},\mathbf{s})$ to $\mathfrak{A}^\infty(\hyperbox^{M'},\psi^{M-M'}(\mathbf{s}))$ for each $\vec{M'} \subset M$ and $\mathbf{s} \in \mathbb{H}(M)$.  The first step is to remove the $z$ or $w$ basepoints in $\hyperbox^{M}$ that do not correspond to components of $M'$ to get a Heegaard diagram for $M'$; this corresponds to $\proj$ in the integer surgery formula for knots.

We can define the general inclusions $
\proj^{\vec{M}}_{\mathbf{s}} :\mathfrak{A}^\infty(\hyperbox^{L'},\mathbf{s}) \rightarrow \mathfrak{A}^\infty(\hyperbox^{L'},p^{\vec{M}}(\mathbf{s}))$ by
\[
\proj^{\vec{M}}_{\mathbf{s}}(x) = \prod_{i \in I_+(\vec{L},\vec{M})} U_i^{(A_i(x) - s_i) \vee 0} \prod_{j \in I_-(\vec{L},\vec{M})} U_j^{(s_j - A_j(x)) \vee 0} x.
\]
Note that this is only defined if $s_i$ is not $\pm \infty$ when $i \in I_{\mp}(\vec{L},\vec{M})$; this issue will not arise when we define the total complex.

We would now like to define the destabilizations, $\destab^{\vec{M}}_{p^{\vec{M}}(\mathbf{s})}:\mathfrak{A}^\infty(\hyperbox^{L'},p^{\vec{M}}(\mathbf{s})) \rightarrow \mathfrak{A}^\infty(\hyperbox^{L'-M},\psi^{\vec{M}}(p^{\vec{M}}(\mathbf{s})))$ analogous to the case of $\destab^{\pm K}$.  We first identify $r_{\vec{M}}(\hyperbox^{L'})$ with its corresponding vertex in $\hyperbox^{L',\vec{M}}$, $\hyperbox^{L',\vec{M}}_{(0,\ldots,0)}$, by compatibility.  This induces a map on $\mathfrak{A}^\infty$ by counting triangles, but this must count basepoints with $E_{\mathbf{s}}(\phi)$ instead of $n_w(\phi)$.

For simplicity, we first assume that $\hyperbox^{L',\vec{M}}$ is in fact a hypercube.  The idea is that each way of traversing the edges of the hypercube gives a sequence of isotopies and handleslides from $\hyperbox^{L',\vec{M}}_{(0,\ldots,0)}$ to $\hyperbox^{L',\vec{M}}_{(1,\ldots,1)}$; $\destab^{\vec{M}}$ will measure the failure for the induced triangle maps to commute.  Recall that the filling gives a map from $\mathfrak{A}^\infty(\mathbb{T}_{\alpha},\mathbb{T}_{\beta_{(0,\ldots,0)}},\mathbf{0})$ to $\mathfrak{A}^\infty(\mathbb{T}_{\alpha},\mathbb{T}_{\beta_{(1,\ldots,1)}},\mathbf{0})$ by counts of holomorphic polygons.  We can twist this map by counting basepoints according to the $E^i_s(\phi)$ as opposed to the usual $n_w(\phi)$.  This in fact gives the desired map $\mathfrak{A}^\infty(\hyperbox^{L',\vec{M}}_{(0,\ldots,0)},p^{\vec{M}}({\mathbf{s}})) \rightarrow \mathfrak{A}^\infty(\hyperbox^{L',\vec{M}}_{(1,\ldots,1)},p^{\vec{M}}(\mathbf{s}))$.

If $\hyperbox^{L',\vec{M}}$ is not a hypercube, but instead has size $\mathbf{d}$, then applying the above maps, $\destab^{\vec{M}}$, will go to $\hyperbox^{L',\vec{M}}_{(1,\ldots,1)}$ instead of $\hyperbox^{L'-M}$.  Therefore, we must do what is called \emph{compression} to arrive at $\hyperbox^{L',\vec{M}}_{\mathbf{d}\cdot (1,\ldots,1)} = \hyperbox^{L'-M}$.  If $\vec{M} = \pm K_i$ for a knot $K_i$, then we would like to go from $\hyperbox^{L',\pm K_i}_{0}$ to $\hyperbox^{L',\pm K_i}_{\mathbf{d}}$ (these will be the two vertices in the hypercube).  There exist triangle-counting maps from $\mathfrak{A}^\infty(\hyperbox^{L',\pm K_i}_{j},p^{\pm K}(\mathbf{s}))$ to $\mathfrak{A}^\infty(\hyperbox^{L',\pm K_i}_{j+1},p^{\pm K_i}(\mathbf{s}))$.  In this case we would simply take the map $\tilde{\destab}^{\pm K_i}$ to be the composition of the $d_i$ triangle-counting maps.

In fact, compression will produce a hypercube with vertices given by the corresponding complexes at the far corners of the hyperbox, namely $\tilde{\C}^\varepsilon$ for $\varepsilon \in \{0,1\}^n$ will be given by $\C^{\mathbf{d}\cdot\varepsilon}$.  Also, if $\| \varepsilon - \varepsilon'\| = 1$ and $\varepsilon \geq \varepsilon'$ in the compressed hypercube, then along an edge, $\tilde{\destab}^{\varepsilon - \varepsilon'}$ will simply be given by the composition of the edge maps from the original hyperbox.  However, in general this does not work (a composition of chain homotopies is not a chain homotopy for the compositions).  For illustration, we define the appropriately compressed map for a size $(2,1)$ hyperbox and refer the interested reader to Section 3 in \cite{hflz} for the general case.

\begin{example} Consider a hyperbox of chain complexes, $\C$, of size $(2,1)$.  We can turn this into a hypercube of chain complexes, $\tilde{\C}$ as follows.  Take $\tilde{\C}^{\varepsilon_1,\varepsilon_2} = \C^{2\varepsilon_1,\varepsilon_2}$ and keep $\tilde{D}^0 = D^0$.  In other words, the complexes at the vertices are given by the corners of the hyperbox.  The map $\tilde{D}^{1,0}$ is given by $D^{1,0} \circ D^{1,0}$, while $\tilde{D}^{0,1} = D^{0,1}$.  So far we have not done anything different from above, but $\tilde{D}^{1,1}$ will have to be more complicated.  A standard exercise in homological algebra shows that the correct choice for $\tilde{D}^{1,1}$ is $D^{1,0} \circ D^{1,1}$ + $D^{1,1} \circ D^{1,0}$.  In the Heegaard Floer setting, $D^{1,0}$ and $D^{0,1}$ are triangle-counting maps, while $D^{1,1}$ counts holomorphic rectangles.
\end{example}

Once the correct map from $\mathfrak{A}^\infty(\hyperbox^{L',\vec{M}}_{\mathbf{0}},p^{\vec{M}}({\mathbf{s}}))$ to $\mathfrak{A}^\infty(\hyperbox^{L',\vec{M}}_{\mathbf{d}\cdot \varepsilon(M)},p^{\vec{M}}(\mathbf{s}))$ is defined, we simply apply our identification of this final Heegaard diagram with $\hyperbox^{L'-M}$ to get one last triangle counting map, again by compatibility.  This final composition is the destabilization $\destab^{\vec{M}}_{p^{\vec{M}}(\mathbf{s})}$.

For an arbitrary sublink, $\vec{M}$, define the map, $\Phi^{\vec{M}}_{\mathbf{s}} = \destab^{\vec{M}}_{p^{\vec{M}}(\mathbf{s})} \circ \proj^{\vec{M}}_{\mathbf{s}}$.
The differential $\destab^\infty$ on $\C^\infty(\hyperbox,\Lambda)$ is given by
\[
(\mathbf{s},x) \mapsto \sum_{\vec{N} \subset L-M} (\mathbf{s} + \Lambda_{\vec{L},\vec{N}},\Phi^{\vec{N}}_{\psi^{M}(\mathbf{s})}(x)).
\]
Here, $x \in \mathfrak{A}^\infty(\hyperbox^{L-M},\psi^M(s))$ and $\Lambda_{\vec{L},\vec{N}} = \sum_{i \in I_-(\vec{L},\vec{N})} \Lambda_i$. Note that the sum is over all possible \emph{oriented} sublinks of $L-M$.

Manolescu and Ozsv\'ath prove that this is indeed the total complex of a hypercube of chain complexes.
It is important to note that by construction, if $\mathbf{s} \in [\mathbf{s_0}]$, then $\mathbf{s} + \Lambda_{\vec{L},\vec{N}} \in [\mathbf{s_0}]$ for any $\vec{N}$.  Therefore, $\C^\infty(\hyperbox,\Lambda)$ splits as a sum of complexes corresponding to the Spin$^c$ structures, $\C^\infty(\hyperbox,\Lambda,[\mathbf{s}])$.

We are finally ready to state the link surgery formula.

\begin{theorem} (Manolescu-Ozsv\'ath, Theorem 1.1 of \cite{hflz}) Consider a complete system of hyperboxes,$\hyperbox$, for $\vec{L} \subset S^3$ and a framing $\Lambda$.  Given a Spin$^c$ structure $\mathfrak{s}$ on $S^3_{\Lambda}(L)$ corresponding to $[\mathbf{s}] \in \mathbb{H}(L)/\Lambda$, there is a relatively-$\mathbb{Z}/N\mathbb{Z}$-graded $\mathbb{F}[[U,U^{-1}]$-vector space isomorphism
\[
\hfboldinfty_*(S^3_{\Lambda}(L),\mathfrak{s}) \cong H_*(\C^\infty(\hyperbox,\Lambda,[\mathbf{s}]),\destab^\infty),
\]
where $N$ is the usual divisibility of the Spin$^c$ structure (see, for example, \cite{hfinvariance}).
\end{theorem}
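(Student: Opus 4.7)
The plan is to derive the link surgery formula by analyzing a Heegaard diagram for $S^3_\Lambda(L)$ via a large handleslide argument, generalizing the proof of the knot surgery formula stated here as Theorem 7.5 of \cite{hflz}. The key idea is to simultaneously organize the Floer complex of the surgery diagram according to both the Spin$^c$ decomposition on $S^3 \setminus L$ and the sublink structure of $L$.

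First I would fix a Heegaard diagram $(\Sigma, \boldalpha, \boldbeta, \mathbf{z}, \mathbf{w})$ for $S^3$ adapted to $L$, in which each component $K_i$ carries a meridional $\beta$-curve $\beta_i^\mu$ separating paired basepoints $z_i, w_i$. Replacing each $\beta_i^\mu$ by a framing curve $\beta_i^\Lambda$ produces a Heegaard diagram for $S^3_\Lambda(L)$. The next step is to handleslide each $\beta_i^\Lambda$ many times across the corresponding meridian to produce an equivalent ``stretched'' diagram whose Floer complex, after grouping generators by relative Spin$^c$ class, decomposes as a product of the form $\prod_{\mathbf{s} \in \mathbb{H}(L)} \sum_{M \subset L} \mathfrak{A}^\infty(\hyperbox^{L-M}, \psi^M(\mathbf{s}))$; the summand indexed by $M$ corresponds to generators lying in a specified region of $\Sigma$ determined by how many times each framing curve has been slid past its meridian.

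The connecting maps in this decomposition must then be identified with the destabilizations $\destab^{\vec{M}}$ precomposed with the inclusions $\proj^{\vec{M}}$. The idea is that handleslides between different configurations of framing and meridional curves translate into triangle-counting maps on $\mathfrak{A}^\infty$; the hypercube structure then arises because for a sublink $\vec{M}$ these handleslides can be performed in many different orders, with $\destab^{\vec{M}}$ capturing, via holomorphic polygon counts, the failure of these orderings to commute. The hyperbox $\hyperbox^{\vec{L}, \vec{M}}$ corresponds to a canonical system of intermediate Heegaard diagrams, and compression matches the composition of triangle maps along paths through these diagrams. The framing shift $\mathbf{s} \mapsto \mathbf{s} + \Lambda_{\vec{L}, \vec{N}}$ tracks how relative Spin$^c$ labels transform when one reverses the orientation of a sublink during the handleslide, which is forced by the topology of the surgery cobordism.

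The hard part will be verifying the hypercube relations $\sum_{\gamma \leq \varepsilon'} D^{\varepsilon' - \gamma} \circ D^\gamma = 0$ on the nose, after compression of all the $\hyperbox^{\vec{L},\vec{M}}$ of size greater than $(1,\ldots,1)$; this ultimately reduces to a careful analysis of the boundary of moduli spaces of holomorphic polygons in Heegaard multi-diagrams with many $\Theta$-insertions, matching each end with a factorization through the hypercube. A second obstacle is controlling the truncations implicit in passing to $\hfboldinfty$, so that the infinite product $\prod_{\mathbf{s} \in \mathbb{H}(L)}$ yields a well-defined $\fpowers$-module map; independence of the choice of complete system of hyperboxes should follow from a continuation-type argument producing chain homotopy equivalences between surgery complexes of hyperbox-equivalent systems. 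Once these are in place, the relative $\mathbb{Z}/N\mathbb{Z}$-grading is tracked directly through the Maslov grading shifts on each handleslide, and the $\fpowers$-module structure is manifest from the polygon maps.
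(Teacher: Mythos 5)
This theorem is cited in the paper without proof: it is Theorem 1.1 of Manolescu--Ozsv\'ath's link surgery formula paper \cite{hflz}, quoted here as a black-box input that the rest of the paper builds on. There is therefore no internal proof for your proposal to be compared against; the paper's ``proof'' is the citation.

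That said, your sketch does track the broad strategy that Manolescu--Ozsv\'ath actually follow: start from a surgery Heegaard diagram, perform large handleslides of framing curves past meridians, organize generators by relative Spin$^c$ class and by which sublink has been destabilized, and identify the connecting maps with polygon-counting maps arranged into a compressed hyperbox. However, the passage from ``a specified region of $\Sigma$'' to the product decomposition is where all of the real content lives. The genuine argument runs through a large-surgeries theorem for links, an elaborate sequence of horizontal and vertical truncations (so that each step compares two finite approximations), and a careful matching of the Spin$^c$ bookkeeping via the maps $p^{\vec{M}}$ and $\psi^{\vec{M}}$; your outline names these ingredients but does not carry any of them out. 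Likewise, verifying the hypercube relations after compression is not just ``a careful analysis of the boundary of moduli spaces'': Manolescu--Ozsv\'ath construct a dedicated algebraic formalism (hyperboxes, compression, and the combinatorics used to define it) precisely because the naive composition of chain homotopies fails to satisfy the required relations, and a separate argument is needed for independence of the complete system of hyperboxes. In short, your proposal is a reasonable road map of \cite{hflz}, not a proof, and within this paper the correct step is to cite that theorem rather than attempt to reprove it.
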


\begin{remark}
While the $\mathfrak{A}^\infty$ complexes are defined over a ring with many formal variables $U_i$, the theorem implies that they become equal in homology.
\end{remark}

\begin{remark}
In the case of a torsion Spin$^c$ structure, this is a relative $\mathbb{Z}$-grading.  We will use the fact that the differential lowers this relative grading by 1; namely we know the exact grading change of any map $\Phi^{\vec{M}}$.  This grading will be the key ingredient for establishing the identification with the complex for cup homology, which is why this argument only works for torsion Spin$^c$ structures.
\end{remark}

\section{Why is the surgery formula special for $\hfboldinfty$?} \label{hfisspecial}

Let's study some special properties of the link surgery formula which are unique to the infinity flavor.  The reason why these properties will not hold for the other flavors is that the inclusion maps $\proj$ will simply not be quasi-isomorphisms, as multiplication by $U$ is not an isomorphism for $\mathbf{CF}^+$ or $\mathbf{CF}^-$.  However, the inclusions are quasi-isomorphisms for $\hfboldinfty$ and thus have the simplest behavior on this flavor; since the inclusions encode the information coming from the link we are performing surgery on (they describe the induced filtrations on $\mathbf{CF}(S^3)$ similar to $CFL$), $\mathbf{CF}^\infty$ will not retain much information about the choice of individual components.  We now make this notion more precise.

Fix a complete system $\hyperbox$ for the framed link $(L,\Lambda)$.  The following is essentially a combination of Proposition 4.2 in \cite{hfcoho} and Lemma 7.9 in \cite{hflz}.

\begin{proposition} \label{vertexiso}
Consider $\mathbf{s}$ in an equivalence class corresponding to a torsion Spin$^c$ structure.  For any component $K_j$, the maps $\Phi^{\pm K_j}_{\mathbf{s}}:\mathfrak{A}^\infty(\hyperbox^M,\mathbf{s}) \rightarrow \mathfrak{A}^\infty(\hyperbox^{M-K_j},\psi^{\pm K_j}(\mathbf{s}))$ are quasi-isomorphisms that lower the relative grading by 1.
\end{proposition}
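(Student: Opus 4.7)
The plan is to factor $\Phi^{\pm K_j}_{\mathbf{s}}$ as $\destab^{\pm K_j}_{p^{\pm K_j}(\mathbf{s})} \circ \proj^{\pm K_j}_{\mathbf{s}}$, show that each factor is a quasi-isomorphism, and then read off the grading shift from the fact that $\Phi^{\pm K_j}$ appears as an edge map in a hypercube whose total differential has degree $-1$.

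I would first show that $\proj^{\pm K_j}_{\mathbf{s}}$ is a chain \emph{isomorphism}, not merely a quasi-isomorphism. Because $U_j$ is invertible in $\fpowers$, the prescription $\proj^{+K_j}_{\mathbf{s}}(x) = U_j^{(A_j(x) - s_j) \vee 0} x$ and its analogue for $-K_j$ define invertible $\fpowers$-module maps. A direct calculation using the definition
\[
E^j_{s_j}(\phi) = (A_j(x) - s_j) \vee 0 - (A_j(y) - s_j) \vee 0 + n_{w_j}(\phi)
\]
yields the telescoping identity $E^j_{s_j}(\phi) + (A_j(y) - s_j) \vee 0 = (A_j(x) - s_j) \vee 0 + n_{w_j}(\phi)$, showing that $\proj^{\pm K_j}_{\mathbf{s}}$ intertwines the two differentials (the $U_j$-exponent produced in the target is exactly $n_{w_j}(\phi)$, as it should be when $s_j = +\infty$).

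The substantive step is the destabilization. Because we evaluate at $p^{\pm K_j}(\mathbf{s})$ with $s_j = \pm\infty$, the weighting $E^j_{s_j}(\phi)$ collapses to $n_{w_j}(\phi)$ or $n_{z_j}(\phi)$, so the $j$-th basepoint behaves as a free basepoint. The map $\destab^{\pm K_j}_{p^{\pm K_j}(\mathbf{s})}$ is by construction a (compressed) composition of polygon-counting maps along the sequence of Heegaard moves encoded by the hyperbox $\hyperbox^{L', \pm K_j}$, followed by the identification of its terminal diagram with $\hyperbox^{M - K_j}$ provided by compatibility. Away from the $j$-th basepoint one is simply carrying out the standard handleslide and isotopy invariance construction for link Floer chain complexes, twisted only by the $U_i$-exponents for $i \neq j$. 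The invariance of this twisted construction---which is the main obstacle, as it requires careful bookkeeping of the basepoint weights through both the polygon counts and the compression---is Lemma 7.9 of \cite{hflz}, which I would invoke directly.

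Finally, the grading shift is formal. For a torsion Spin$^c$ structure the relative Maslov gradings on the individual $\mathfrak{A}^\infty$ summands assemble into a relative $\mathbb{Z}$-grading on the total surgery complex $\C^\infty(\hyperbox, \Lambda, [\mathbf{s}])$ with respect to which $\destab^\infty$ has degree $-1$. Since $\Phi^{\pm K_j}$ is a single-edge component of $\destab^\infty$, it must drop this relative grading by exactly $1$.
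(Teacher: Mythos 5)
Your proposal is correct and takes essentially the same route as the paper, which simply cites Proposition~4.2 of \cite{hfcoho} and Lemma~7.9 of \cite{hflz} rather than spelling out the argument. The factorization $\Phi^{\pm K_j}_{\mathbf{s}} = \destab^{\pm K_j}_{p^{\pm K_j}(\mathbf{s})} \circ \proj^{\pm K_j}_{\mathbf{s}}$, the observation that $\proj^{\pm K_j}_{\mathbf{s}}$ is in fact a chain \emph{isomorphism} because $U_j$ is already invertible in $\fpowers$-type rings, the appeal to Lemma~7.9 of \cite{hflz} for the invariance of the (twisted) destabilization, and the reading of the degree shift off the relative $\mathbb{Z}$-grading on the total surgery complex are all consistent with how the cited references supply the result.
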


 For notation, $\C$ will represent the hypercube of chain complexes $\C^\infty(\hyperbox,\Lambda)$, or possibly the subcomplex corresponding to a torsion Spin$^c$ structure when this will not cause confusion.

\begin{remark}  \label{vertexhomology} Consider the complex $\C^{\varepsilon(M)}_{\mathbf{s}} = \mathfrak{A}^\infty(\hyperbox^{L-M},\mathbf{s})$.  This is quasi-isomorphic to $\mathfrak{A}^\infty(\hyperbox^\emptyset,\mathbf{+\infty})$  by applying $\Phi^{\pm K_j}$ for all components $K_j \subset L-M$.  We can conclude that each $\C^\varepsilon_{\mathbf{s}}$ has homology given by $\fpowers$, so it is generated by a single element.
\end{remark}

However, we can do better than this.  Consider a face, $F$, of any dimension in $\{0,1\}^n$.  Let $L_F$ be the sublink consisting of components such that $\Phi^{+K_i}$ does not vanish in $\C_F$ (in other words, both  $\varepsilon_i = 0$ and $\varepsilon_i = 1$ appear in the face).  Define $\mathbb{H}(L):(L,\Lambda|_{L_F})$ to be the quotient of the lattice $\mathbb{H}(L)$ by the sublattice generated by $\Lambda_i$ where $K_i$ is a component of $L_F$.  Furthermore, let $\mathbb{H}(L,\Lambda|_{L_F})$ denote the set of $\mathbf{s}$ in $\mathbb{H}(L)$ with $[\mathbf{s}] \in \mathbb{H}(L):H(L,\Lambda|_{L_F})$.  Let's construct the following module
\[
\C_F = \prod_{\mathbf{s} \in \mathbb{H}(L,\Lambda|_{L_F})} \sum_{\varepsilon(M) \in F} \mathfrak{A}^\infty(\hyperbox^{L-M},\psi^M(\mathbf{s}))
\]
This is naturally a chain complex, even if it is not a sub- or quotient-complex.  This is because any such face-module is the result of a sequence of subcomplexes of quotient-complexes of subcomplexes etc.  Choose a component $K_j$ that is not in $L_F$ such that $\varepsilon_j = 0$ in $F$.  We can construct a new subface complex of the same dimension, $F_j$, given by changing $\varepsilon_j$ to 1 for each $\varepsilon$ and giving it the inherited chain complex structure (this is now a subcomplex).

\begin{lemma} \label{pageiso}
With the notation as above, $C_F$ and $C_{F_j}$ are quasi-isomorphic.
\end{lemma}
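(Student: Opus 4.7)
The plan is to prove $H_*(\C_F) \cong H_*(\C_{F_j})$ by showing that the two bounded spectral sequences arising from the filtration by $\|\varepsilon_F\|$ are isomorphic from the $E_1$ page onward; the vertex-by-vertex identification comes from the quasi-isomorphisms of Proposition \ref{vertexiso}. Both $\C_F$ and $\C_{F_j}$ are total complexes of hypercubes of chain complexes indexed by $\{0,1\}^{L_F}$ (since $L_{F_j} = L_F$), with the same $\mathbf{s}$-index set $\mathbb{H}(L,\Lambda|_{L_F})$. Filter each by $p(\varepsilon) = \|\varepsilon_F\|$; because $p$ ranges over $\{0,1,\ldots,|L_F|\}$, the filtrations are bounded and the induced spectral sequences converge strongly to $H_*(\C_F)$ and $H_*(\C_{F_j})$ respectively.

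On $E_0$, the only surviving piece of the differential is the internal Floer differential $D^0 = \partial$ at each vertex, so by Remark \ref{vertexhomology} each $E_1$ entry has the form $\prod_\mathbf{s} \fpowers$, one copy per vertex of the $L_F$-hypercube; Proposition \ref{vertexiso} applied vertex-wise to the chain maps $\Phi^{+K_j}\colon \C_F^\varepsilon \to \C_{F_j}^{\varepsilon + e_j}$ then yields an isomorphism of the two $E_1$ pages. The higher differentials $d_r$ are induced by the $\Phi^{\vec{N}}$ with $\vec{N} \subset L_F$ and $\|\vec{N}\| = r$, summed over orientations with their $\mathbf{s}$-shifts, and their formal description is identical on the two sides. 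To promote the $E_1$-identification to every $E_r$, apply the hypercube-of-chain-complexes relations $\sum_{\gamma \leq \varepsilon'} D^\gamma D^{\varepsilon' - \gamma} = 0$ within the enlarged face $F \cup F_j$, specialized to $\varepsilon'$ with $\varepsilon'_j = 1$; this rewrites $\Phi^{+K_j}\Phi^{\vec{N}} + \Phi^{\vec{N}}\Phi^{+K_j}$ as $\partial \Phi^{+K_j \cup \vec{N}} + \Phi^{+K_j \cup \vec{N}} \partial$ plus higher $\Phi^{+K_j \cup \vec{N}'}$ terms, all of which strictly raise $\|\varepsilon_F\|$ and therefore vanish on each $E_r$.

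With matching $E_1$ pages and matching $d_r$ for every $r \geq 1$, strong convergence produces the desired isomorphism $H_*(\C_F) \cong H_*(\C_{F_j})$ as relatively-graded $\fpowers$-modules. The principal technical obstacle is the naturality step: one must carefully extract, from the full system of hypercube relations in $F \cup F_j$, the precise chain homotopies that witness the commutation of $\Phi^{+K_j}$ with the $\Phi^{\vec{N}}$-structure on $L_F$, and verify that each such homotopy is strictly filtration-raising in $\|\varepsilon_F\|$ so that it is invisible on every $E_r$. Once this compatibility is in hand, the conclusion follows from standard spectral-sequence comparison for bounded, strongly convergent filtrations.
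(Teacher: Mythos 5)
Your proposal circles the right ideas — filter by $\|\varepsilon_F\|$, use Proposition~\ref{vertexiso} vertex-wise, and observe that the higher $\Phi^{+K_j\cup\vec{N}}$ terms strictly raise $\|\varepsilon_F\|$ — but it never actually closes, and the step you flag as a ``technical obstacle'' is exactly the place where the argument stops being a proof. Comparing two spectral sequences requires a \emph{morphism of filtered complexes} inducing the page-by-page isomorphisms; having $E_1$ pages that look alike and differentials with ``identical formal description'' is not a substitute. The vertex-level maps $\Phi^{+K_j}\colon \C_F^\varepsilon \to \C_{F_j}^{\varepsilon + e_j}$ are not a chain map on the total complexes, and your plan to ``extract the precise chain homotopies'' from the hypercube relations is deferred rather than carried out. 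You are essentially describing, without naming it, the map you need.

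The paper's proof makes the obstacle evaporate by writing down the map explicitly: $\mathcal{S}^{+K_j} = \sum_{M \subset L_F} \Phi^{+K_j \cup +M}$, a single filtered chain map $\C_F \to \C_{F_j}$ whose chain-map property is \emph{exactly} the hypercube axiom $\sum_{\gamma\leq\varepsilon'} D^{\varepsilon'-\gamma}\circ D^\gamma = 0$ that you invoke, not something you need to ``carefully extract.'' One then filters its mapping cone by $\F_j(x) = -\sum_{i\neq j}\varepsilon_i$; the only filtration-preserving pieces of the differential are $\partial$ and $\Phi^{+K_j}$, so the associated graded is a direct product of the two-step complexes $\bigl[\mathfrak{A}^\infty(\hyperbox^{L-M},\psi^M(\mathbf{s})) \xrightarrow{\Phi^{+K_j}} \mathfrak{A}^\infty(\hyperbox^{L-M-K_j},\psi^{M\cup K_j}(\mathbf{s}))\bigr]$, each acyclic by Proposition~\ref{vertexiso}. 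A bounded filtration with acyclic associated graded forces the total mapping cone to be acyclic, and $\mathcal{S}^{+K_j}$ is a quasi-isomorphism. To repair your write-up, replace the two-spectral-sequences comparison with this one-complex argument: name $\mathcal{S}^{+K_j}$, note it is a chain map for free, and filter its cone.
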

\begin{proof}
We now study the induced map $\mathcal{S}^{+K_j} = \sum_{M \subset L_F} \Phi^{+K_j \cup +M}$ from $\C_F$ to the $\C_{F_j}$, where $K_j$ is not a component of $L_F$ (we must take the product over all $\mathbf{s}$).  This is a chain map by construction.  Consider the filtration on the mapping cone of $\mathcal{S}^{+K_j}$ given by $\F_j(x) = -\sum_{i \neq j} \varepsilon_i$.  The only components that preserve the filtration level will be $\partial$ and $\Phi^{K_j}$.  Since the maps $\Phi^{K_j}$ are quasi-isomorphisms between each of the corresponding $\mathfrak{A}^\infty$ complexes, the associated graded is acyclic.  Therefore, the entire mapping cone is acyclic.  Thus, the two face complexes are quasi-isomorphic.
\end{proof}

\begin{remark} This does not imply that all associated face complexes of the same dimension in $\C^\infty(\hyperbox,\Lambda)$ are quasi-isomorphic.
\end{remark}

This does, however, tell us how to relate face complexes to the complexes corresponding to surgery on certain sublinks.

\begin{lemma} \label{removecomponent} Consider the filtration $\mathcal{F}(x) = -\|\varepsilon\|$ defined on $C_F$.  The face complex $C_F$ is filtered quasi-isomorphic (up to an absolute shift in filtration) to the complex $\C^\infty(\hyperbox',\Lambda|_{L_F})$ corresponding to $S^3_{\Lambda|_{L_F}}(L_F)$, where $\hyperbox'$ is a complete system of hyperboxes of Heegaard diagrams for $L_F$ coming from restricting $\hyperbox$.
\end{lemma}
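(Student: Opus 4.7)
The strategy is to iteratively apply Lemma \ref{pageiso} to ``turn on'' every component of $L$ not in $L_F$, and then identify the resulting face complex with $\C^\infty(\hyperbox',\Lambda|_{L_F})$ by hand. Partition $L \setminus L_F = J \sqcup J'$, where $J' = \{K_j : \varepsilon_j = 0 \text{ throughout } F\}$ and $J$ consists of the components with $\varepsilon_j = 1$ throughout $F$. For each $K_j \in J'$, Lemma \ref{pageiso} provides a quasi-isomorphism $\mathcal{S}^{+K_j}:C_F \to C_{F_j}$, where $F_j$ is $F$ with $\varepsilon_j$ flipped from $0$ to $1$. Since $L_{F_j} = L_F$ (a component outside $L_F$ remains outside $L_F$ after flipping), I can iterate this over all $K_j \in J'$ to obtain a quasi-isomorphism $C_F \simeq C_{F''}$, where $F''$ has $\varepsilon_j = 1$ for every $K_j \in L \setminus L_F$.

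To enhance this to a filtered statement, I would introduce the auxiliary filtration $\F_{L_F}(x) = -\sum_{K_i \in L_F}\varepsilon_i$. Since $\mathcal{S}^{+K_j} = \sum_{M \subset L_F}\Phi^{+K_j \cup +M}$ and $K_j \notin L_F$, the summand with $M = \emptyset$ preserves $\F_{L_F}$ while every other summand strictly decreases it. On the associated graded with respect to $\F_{L_F}$, the induced map at each vertex is thus $\Phi^{+K_j}$, which is a quasi-isomorphism by Proposition \ref{vertexiso}. A standard spectral sequence argument then shows $\mathcal{S}^{+K_j}$ itself is a filtered quasi-isomorphism with respect to $\F_{L_F}$, and so is the iterated composition. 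The stated filtration $\F(x) = -\|\varepsilon\|$ differs from $\F_{L_F}$ by a constant ($-|J|$ on $C_F$ and $-|J|-|J'|$ on $C_{F''}$), which yields a filtered quasi-isomorphism $C_F \simeq C_{F''}$ with respect to $\F$ up to an absolute shift by $|J'|$.

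Finally, I would identify $C_{F''}$ with $\C^\infty(\hyperbox',\Lambda|_{L_F})$ directly. Every vertex of $F''$ corresponds to a sublink $M = (L \setminus L_F) \cup M'$ with $M' \subset L_F$, so $L - M = L_F - M'$; compatibility of the complete system then identifies $\hyperbox^{L-M}$ with $(\hyperbox')^{L_F - M'}$. The Spin$^c$ coordinates of $\psi^M(\mathbf{s})$ are indexed only by $L_F - M'$, so each vertex complex depends solely on the $L_F$-part of $\mathbf{s}$. The differentials in $C_{F''}$ sum over oriented sublinks $\vec{N}$ with $|N| \subset L - M = L_F - M'$, matching those in $\C^\infty(\hyperbox',\Lambda|_{L_F})$, and the $\Lambda$-translations project correctly onto the $L_F$-block. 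The main obstacle is this last step: carefully matching Spin$^c$ structures and verifying that $\Lambda_{\vec{L},\vec{N}}$ agrees with $(\Lambda|_{L_F})_{\vec{L_F},\vec{N}}$ under the projection $\mathbb{H}(L) \to \mathbb{H}(L_F)$, using that the components of $\mathbf{s}$ outside $L_F$ have been made irrelevant by the preceding destabilizations $\Phi^{+K_j}$.
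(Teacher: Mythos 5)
Your proof is correct and takes essentially the same route as the paper: iterate the maps $\mathcal{S}^{+K_j}$ from Lemma~\ref{pageiso} to turn on all components $K_j \in L\setminus L_F$ with $\varepsilon_j=0$, then identify the resulting subcomplex with $\C^\infty(\hyperbox',\Lambda|_{L_F})$. The paper's proof is terser---it asserts the filtered quasi-isomorphism property without introducing the auxiliary filtration $\F_{L_F}$ and defers the final identification to Section 11 of \cite{hflz}---but your elaboration of both steps is a faithful and correct unwinding of the same argument.
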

\begin{proof}
For each $K_j$ in $L-L_F$ with $\varepsilon_j = 0$, simply apply $\mathcal{S}^{+K_j}$ to obtain the desired filtered quasi-isomorphisms to the complex with $\varepsilon_j=1$.  After exhausting all such $K_j$, the resulting subcomplex corresponds to $S^3_{\Lambda|_{L_F}}(L_F)$ after an appropriate restriction of $\hyperbox$ (see Section 11 of \cite{hflz}).
\end{proof}

\begin{remark} This lemma will be used repeatedly throughout this paper with the key observation that these quasi-isomorphisms of the faces preserve the filtration $\F$ (up to some absolute shift which we will ignore).
\end{remark}

\section{The First Two Differentials}
In this section, we set the framework to prove Theorem~\ref{spectralcalc} by inducing the correct filtration and dispensing with the first two differentials in the spectral sequence.

Consider an arbitrary $\ell$-dimensional hypercube of chain complexes, $\C$.  We study the filtration $\F_{\C}$ on the total complex given by $\F_{\C}(x) = \ell -\|\varepsilon\|$ for any $x$ in $\C^\varepsilon$.  This induces a spectral sequence with $E_1$ term the homology of the total complex with respect to only $D^0=\partial$, which converges to the homology of the total complex.  

Now, let $\hyperbox$ be a complete system of Heegaard diagrams for $L= K_1 \cup K_2 \cup \ldots \cup K_\ell$, a homologically split link of $\ell$ components.  We will denote by $Y$ the manifold obtained by 0-surgery on each component of $L$.  The framing is $\Lambda_0$, which is simply the 0-matrix.  We can then induce the filtration $\F_{\C}$ on $\C^\infty(\hyperbox,\Lambda_0,\mathbf{0})$, the subcomplex corresponding to the unique torsion Spin$^c$ structure; we will refer to this as the $\varepsilon$-filtration.
Note that this Spin$^c$ structure is represented by a single element in $\mathbb{H}(L)$.

\begin{remark}
Because of this, the complex $\C^\infty(\hyperbox,\Lambda_0,\mathbf{0})$ has only one $\mathfrak{A}^\infty$ complex at each vertex of $\{0,1\}^\ell$; in fact we can see the $E_1$ term must have rank $2^\ell$, by Remark~\ref{vertexhomology}.  Furthermore, we will suppress the orientations of $Y$ and $L$, as this can be seen to not affect any of the calculations.
\end{remark}

We will use heavily the notion of the \emph{depth} of a filtration; this is the maximal difference in the filtration levels between any two elements.  It is clear that all differentials, $d_k$, in the induced spectral sequence from a filtration vanish for $k$ greater than the depth.  For the $\varepsilon$-filtration coming from an $\ell$-component link, the depth is simply $\ell$.

\begin{proposition} \label{firsttwovanish} The first two differentials, $d_1$ and $d_2$, in the spectral sequence vanish.
\end{proposition}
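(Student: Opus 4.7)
The plan is to prove both vanishings face-by-face in the hypercube, reducing each computation to the known $\hfboldinfty$ for surgery manifolds with small $b_1$.

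First, I would verify the following localization principle: for any $k$-face $F$ of $\{0,1\}^\ell$, the restriction of the $\varepsilon$-filtration makes $\C_F$ into a filtered complex in its own right, and the component of the global $d_k$ at the $E_1$ stage going between opposite corners of $F$ coincides with the $d_k$ of the spectral sequence of $\C_F$. This holds because all chain-level lifts through $\partial$ and the intermediate maps $D^\gamma$ with $\|\gamma\|<k$ needed to compute $d_k$ stay inside $\C_F$. Consequently, to establish $d_1=d_2=0$ globally, it suffices to prove that the face spectral sequence of every $k$-face with $k\leq 2$ collapses at $E_1$.

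Next, I would apply Lemma \ref{removecomponent}: for any $k$-face $F$, the face complex $\C_F$ is filtered quasi-isomorphic (up to an absolute shift) to $\C^\infty(\hyperbox',\Lambda_0|_{L_F})$, whose homology is $\hfboldinfty(S^3_0(L_F))$ by the link surgery theorem, where $L_F$ is the $k$-component sublink along which $F$ varies. Because $L$ is homologically split, $\Lambda_0|_{L_F}$ remains the zero matrix and $b_1(S^3_0(L_F))=k$. For $k=1$, Theorem 10.1 of \cite{hfpa} gives $\hfboldinfty$ of $\fpowers$-rank $2$. For $k=2$, the triple cup product automatically vanishes since $\Lambda^3 H^1=0$, and the same theorem gives $\fpowers$-rank $4$. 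By Remark \ref{vertexhomology}, the $E_1$ page of the $\varepsilon$-filtration spectral sequence on $\C_F$ has $\fpowers$-rank $2^k$, matching the total homology rank; hence $E_1=E_\infty$ and every differential on the face spectral sequence vanishes.

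Applying this to each edge gives $d_1=0$ on every edge component, so $d_1=0$ globally and $E_2=E_1$. Applying it to each $2$-face then forces $d_2=0$ on every such component, so $d_2=0$ globally. The only conceptually delicate step is the localization at the beginning; the remainder is a rank count using the known calculations for $b_1\leq 2$.
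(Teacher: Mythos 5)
Your proposal is correct and ultimately rests on the same ingredients as the paper's proof (Theorem 10.1 of \cite{hfpa} for the rank count, Lemma~\ref{removecomponent} for identifying face complexes with sublink surgery complexes), but the organizing structure is genuinely different. The paper proceeds by induction on the number of components $\ell$, each step comparing $\C$ with the codimension-one \emph{subcomplex} $\C_j=\bigoplus_{\varepsilon_j=1}\C^\varepsilon$ and the quotient $\C/\C_j$; since $\C_j$ really is a filtered subcomplex, the inclusion is a filtered chain map and the vanishing of $d_1$ (resp.\ $d_2$) on $\C_j$ transfers to $\C$ by the standard morphism-of-spectral-sequences argument, followed by the filtered quasi-isomorphism $\mathcal{S}^{+K_j}$ for the quotient. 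You instead collapse the induction and localize directly to $k$-faces with $k\le 2$. That is a cleaner route, but it trades the standard morphism-of-spectral-sequences fact for a ``localization principle'' that needs separate justification, because a $k$-face $\C_F$ is only a subquotient, not a subcomplex.

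The localization is in fact true, but not quite for the reason you state. When computing $d_2$ on a class $[x]$ supported at a vertex $\varepsilon$, the chain-level lift $y$ satisfying $\partial y = \sum_j D^{\tau_j}x$ must have a component $y_j$ at \emph{every} $\varepsilon+\tau_j$, including those $j$ outside the two face directions $\{i_1,i_2\}$; so the lift does \emph{not} stay inside $\C_F$. What saves the argument is that the component of $d_2[x]$ landing at the opposite corner $\varepsilon+\tau_{i_1}+\tau_{i_2}$ is
\[
\bigl[D^{\tau_{i_2}}(y_{i_1}) + D^{\tau_{i_1}}(y_{i_2}) + D^{\tau_{i_1}+\tau_{i_2}}(x)\bigr],
\]
and this expression only involves $y_{i_1}$, $y_{i_2}$, and the diagonal map for the face --- all supported on $\C_F$. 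The extraneous $y_j$ never enter this component, and the defining condition on $y_{i_1},y_{i_2}$ is identical whether one works in $\C$ or in $\C_F$. So the face differential and the face-corner component of the global differential agree, though one must say this carefully rather than claim the whole lift stays in $\C_F$. You should also record that the $d_2$ step uses $d_1\equiv 0$ globally (which you establish first via edges), so that $E_2=E_1$ with no subquotient ambiguity, and the lifts $y_j$ exist in the first place. With those points made explicit, your argument is complete and arguably more direct than the paper's induction.
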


A key ingredient in the proof will be the following theorem due to Ozsv\'ath and Szab\'o.  Note that this can also be quickly proven using the results from Section~\ref{hfisspecial}.

\begin{theorem} [Theorem 10.1 of \cite{hfpa}] Suppose $b_1(Y) \leq 2$ and $\mathfrak{s}$ is torsion.  Then, $HF^\infty(Y,\mathfrak{s})$ is a free $\mathbb{Z}[U,U^{-1}]$-module of rank $2^{b_1(Y)}$.
\end{theorem}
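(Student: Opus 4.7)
The plan is to invoke the universal coefficients spectral sequence for torsion Spin$^c$ structures constructed in \cite{hfpa}, whose $E_3$ page is $\Lambda^*(H^1(Y;\mathbb{Z})) \otimes \mathbb{Z}[U,U^{-1}]$ and which converges to $HF^\infty(Y,\mathfrak{s})$. As a $\mathbb{Z}[U,U^{-1}]$-module this page has rank exactly $2^{b_1(Y)}$, so the argument divides into two steps: establish degeneration at $E_3$, and then resolve the extension problem in order to upgrade degeneration into freeness of $HF^\infty$ itself.

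For degeneration, the key observation is that each differential $d_r$ with $r \geq 3$ decreases the exterior algebra degree on $\Lambda^*(H^1(Y;\mathbb{Z}))$ by at least three. This is transparent for $r = 3$, where $d_3$ is a contraction with a $3$-form on $H^1(Y;\mathbb{Z})$, and the higher differentials correspond to multilinear operations of still higher order. When $b_1(Y) \leq 2$ one has $\Lambda^i(H^1(Y;\mathbb{Z})) = 0$ for every $i \geq 3$, so every such $d_r$ has either zero source or zero target. Hence the spectral sequence collapses and $E_\infty \cong E_3$.

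To convert this into a statement about $HF^\infty$ itself, I would first pass to field coefficients. The same spectral sequence argument over the PIDs $\mathbb{Q}[U,U^{-1}]$ and $\mathbb{F}_p[U,U^{-1}]$ produces an associated graded that is free of rank $2^{b_1(Y)}$; since over a PID every short exact sequence of free graded modules of finite rank splits, $HF^\infty(Y,\mathfrak{s})$ with these coefficients is free of the correct rank. A universal coefficient theorem relating $HF^\infty$ at $\mathbb{Z}$- and $\mathbb{F}_p$-coefficients then forces the integral $HF^\infty(Y,\mathfrak{s})$ to be $p$-torsion-free for every prime $p$; combined with the rational rank computation, this forces freeness of rank $2^{b_1(Y)}$ over $\mathbb{Z}[U,U^{-1}]$.

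The hardest step is the extension problem: translating the associated graded computation into an integral, unfiltered statement about $HF^\infty$. This requires a sufficiently clean universal coefficient theorem for $HF^\infty$ in this graded, $U$-inverted setting, together with a careful rank comparison across characteristics to rule out torsion. The spectral-sequence collapse itself, by contrast, is routine once one knows the $E_3$ page and the target-degree behavior of the higher differentials.
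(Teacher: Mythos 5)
This statement is not proved in the paper; it is cited as Theorem 10.1 of \cite{hfpa}, where the original argument goes through surgery exact triangles and explicit calculations, and the paper remarks in passing that it could alternatively be derived from the link surgery formula machinery of Section~\ref{hfisspecial}. Your route, via the universal coefficients spectral sequence of \cite{hfpa}, is therefore a third and genuinely different approach, and the degeneration step is correct: on the page $\Lambda^*(H^1(Y;\mathbb{Z})) \otimes \mathbb{Z}[U,U^{-1}]$ each $d_r$ with $r \geq 3$ drops the exterior degree by $r$, and since $\Lambda^{\geq 3}(H^1(Y;\mathbb{Z})) = 0$ when $b_1(Y) \leq 2$, these differentials vanish for purely degree-theoretic reasons. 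This does not require any identification of $d_3$ with a contraction against the triple cup product (which over $\mathbb{Z}$ is only conjectural), so the argument is not circular on that score. One point worth verifying is a different possible circularity: in \cite{hfpa} the universal coefficients spectral sequence is built on the computation of $HF^\infty$ with totally twisted coefficients, which is itself proved by induction on $b_1$; your argument is sound only because that input needs just the trivial $b_1=0$ case of the present statement, not the $b_1=1,2$ cases. It would be worth saying so explicitly.

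The extension step, however, is much more roundabout than necessary. You pass to $\mathbb{Q}$- and $\mathbb{F}_p$-coefficients, invoke a universal coefficient theorem for $HF^\infty$, and compare ranks across characteristics to rule out torsion; you even flag that a ``sufficiently clean'' UCT in the $U$-inverted setting is needed, which is a genuine complication since $\mathbb{Z}[U,U^{-1}]$ is not a PID. None of this is required. The filtration on $HF^\infty(Y,\mathfrak{s})$ inducing the spectral sequence is bounded of length at most $b_1+1$, and you have shown its associated graded $E_\infty \cong E_3$ is a free $\mathbb{Z}[U,U^{-1}]$-module of finite rank. A bounded filtration with free graded pieces has free total module over any ring: a short exact sequence whose quotient is free (hence projective) splits, and induction up the filtration gives $HF^\infty(Y,\mathfrak{s}) \cong E_\infty$ as $\mathbb{Z}[U,U^{-1}]$-modules. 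This finishes the proof directly over $\mathbb{Z}$, with no change of coefficients and no appeal to a universal coefficient theorem.
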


\begin{proof}[Proof of Proposition~\ref{firsttwovanish}]
We prove this by induction on $\ell$.  $\C$ will now refer to $\C^\infty(\mathcal{H},\Lambda_0,\mathbf{0})$.  Since in this subcomplex there can only be one possible value of $\psi^{M}(\mathbf{s})$ (modulo $\infty$'s) for each sublink, namely $\mathbf{0} \in \mathbb{H}(L-M)$, we will omit this from the notation.

Let us now show that $d_1 \equiv 0$.  As a warm-up, if $\ell=0$, then $\hfboldinfty(Y) \cong \fpowers$.  Since $E_1 \cong E_\infty \cong H_*(\C^0,\partial) \cong \fpowers$, we must have that $d_1=0$. Now, for $\ell = 1$, we see that $\hfboldinfty(Y) \cong \fpowers \oplus \fpowers$ by Theorem 4.2.  Again,
\[E_1 \cong H_*(C^0,\partial) \oplus H_*(C^1,\partial) \cong \fpowers \oplus \fpowers,
\]
so $d_1=0$.
Suppose that $d_1$ vanishes for any link with $\ell$ components.  Let $L$ have $(\ell+1)$-components and have $d^i_1$ represent the component of the differential $d_1$ which maps from $\C^\varepsilon$ to $\C^{\varepsilon+\tau_i}$, where $\tau_i = (0,\ldots,0,1,0,\ldots,0)$ with the 1 in component $i$.  Now, let's consider the subcomplex $\C_j = \bigoplus_{\varepsilon_j=1} \C^\varepsilon$, which corresponds to 0-surgery on the $\ell$-component sublink $L'=L - K_j$, for some $j \neq i$.  Note that the inclusion of $\C_j$ with its own $\varepsilon$-filtration as an $\ell$-component link coming via the identifications of Section~\ref{hfisspecial} into $\C$ is a morphism of filtered complexes.  Therefore, this induces homomorphisms between $E^{\C_j}_n$ and $E^{\C}_n$ for all $n$ (see, for example, \cite{usersguide}).  It is easy to see that this is an injection on the $E_1$ terms.  Thus, we must have that the image of the kernel of the corresponding map $d^i_1$ for the 0-surgery complex for $L'$ is exactly the kernel of $d^i_1|_{\C_j}$ .  By assumption, $d_1$ is identically 0 for the complex associated to an $\ell$-component sublink, so $d^i_1|_{\C_j}=0$.

We now want to see that $d^i_1$ is 0 on the quotient complex $\bigoplus_{\varepsilon_j=0} \C^\varepsilon = \C/\C_j$.  Since $d^i_1$ has no nonzero component from $\C/\C_j$ to $\C_j$, this will show that $d^i_1$ is identically 0 everywhere.  We can in fact identify $\C/\C_j$ with $\C_j$, simply by applying the filtered quasi-isomorphism
\[
\mathcal{S}^{+K_j} = \sum_{M \subset L - K_j} \Phi^{+K_j \cup +M}
\]
from Lemma~\ref{pageiso}.  Therefore, $d^i_1$ is 0 on $\C/\C_j$.  Repeating this argument for various $i$ and $j$, we obtain $d_1 \equiv 0$.

In fact, we can repeat this argument to prove that $d_2$ is identically 0 as well.  For $\ell = 0$ and $1$, this is trivial simply by the depth of the filtration.  Thus, we begin our analysis with $\ell = 2$.  As before, from \cite{hfpa} we have that $\hfboldinfty(Y)$ has rank $4$ over $\fpowers$.  However, we know that the total rank of the $E_1$ page must in fact be $2^\ell = 4$.  Therefore, $d_2$ and the higher differentials vanish.

Now, for the induction step, we want to notice that $E_2 \cong E_1$ by the previous argument that $d_1 = 0$.  Therefore, we have the same injectivity properties on the $E_2$ pages coming from the inclusion of the faces $\C_j$.  We get that $d_2$ is 0 on $\C_j$ again by including the corresponding complex for sublink $L-K_j$, which has vanishing $d_2$ by induction.  For $\C/\C_j$, we have a similar statement to the $d_1$ case, which is that $d^{i_1,i_2}_2 = 0$ for $i_1,i_2 \neq j$, where $d^{i_1,i_2}_2$ is the component of $d_2$ which maps from $\C^\varepsilon$ to $\C^{\varepsilon + \tau_{i_1} + \tau_{i_2}}$.  This follows by again identifying $\C/\C_j$ with $\C_j$ via a filtered quasi-isomorphism.  After doing this for different values of $j$, we see that $d^{i_1,i_2}_2 = 0$ for all pairs $(i_1,i_2)$.  This shows $d_2$ vanishes.
\end{proof}

\section{The Third Differential}
We may now identify the $E_3$ page with $E_1$ in a natural way.  However, this still does not yet look like an exterior algebra.  This next lemma creates the image we desire.  To motivate this, recall that the $E_1$ term has rank $2^\ell$, which is exactly the total rank of the exterior algebra for an $\ell$-dimensional space.  For notational purposes, let $\fexterior(Y) = \Lambda^*(H^1(Y;\mathbb{Z})) \otimes \mathbb{F}$.

In the following lemma, we must take the word `natural' with a grain of salt.  As the link surgery formula only establishes a relative grading on $\C^\infty(\hyperbox,\Lambda_0,\mathbf{0})$, we simply fix a choice of absolute grading on this complex.  For the remainder of the paper, we will keep everything fixed with respect to this choice of absolute grading, in the sense that inclusions of complexes corresponding to sublinks should respect this absolute grading.  In this sense the identifications we will make will be `natural' with respect to these inclusions.

\begin{lemma} \label{d3calc}
Let $L$ be a homologically split link and consider the $\varepsilon$-filtration on $\C^\infty(\hyperbox,\Lambda_0,\mathbf{0})$.  There is a natural association of the $E_3$ page of the induced spectral sequence with $\fexterior(Y) \otimes \fpowers$, such that $d_3: \Lambda^i_{\mathbb{F}}(Y) \otimes U^j \rightarrow \Lambda^{i-3}_{\mathbb{F}}(Y) \otimes U^{j-1}$.
\end{lemma}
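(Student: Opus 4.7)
The approach is to combine Proposition~\ref{firsttwovanish} with the per-vertex analysis of Remark~\ref{vertexhomology} to pin down $E_3$ as a free $\fpowers$-module of rank $2^\ell$, one copy of $\fpowers$ at each vertex of the hypercube $\{0,1\}^\ell$. We then match vertices bijectively with basis elements of $\fexterior(Y)$ and use grading bookkeeping to read off the bigrading shift of $d_3$.

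First I would perform the identification at each vertex. By Proposition~\ref{firsttwovanish}, $E_3 \cong E_1 = \bigoplus_{\varepsilon \in \{0,1\}^\ell} H_*(\C^\varepsilon,\partial)$, and by Remark~\ref{vertexhomology}, each summand is generated over $\fpowers$ by a single class $g_\varepsilon$. On the topological side, $Y = S^3_0(L)$ is a homologically split zero-surgery, so $H^1(Y;\mathbb{F}) \cong \mathbb{F}^\ell$ with distinguished basis $\phi^1,\ldots,\phi^\ell$ given by Poincar\'e duals of meridians of the components of $L$. Setting
\[
g_\varepsilon \;\leftrightarrow\; \phi^{j_1} \wedge \cdots \wedge \phi^{j_{\ell-\|\varepsilon\|}}, \qquad \{j_1 < \cdots\} = \{j : \varepsilon_j = 0\},
\]
gives a bijection between the $\binom{\ell}{\ell-\|\varepsilon\|}$ vertices at filtration level $\ell-\|\varepsilon\|$ and a basis of $\Lambda^{\ell-\|\varepsilon\|}_{\mathbb{F}}(Y)$; extending $\fpowers$-linearly yields the required identification of $E_3$ with $\fexterior(Y) \otimes \fpowers$.

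Next I would verify the bigrading of $d_3$. That $d_3$ lowers the exterior degree by exactly $3$ is immediate: $d_3$ is induced by the components $D^{\varepsilon'}$ of the total differential with $\|\varepsilon'\| = 3$, and these raise $\|\varepsilon\|$ by $3$, hence decrease $i = \ell - \|\varepsilon\|$ by $3$. For the $U$-shift, I would first observe that each edge map $\Phi^{\pm K_j}$ is a quasi-isomorphism (Proposition~\ref{vertexiso}) whose intrinsic degree on the underlying chain complex is $\|\tau_j\|-1 = 0$; consequently the Maslov degree of a generator is preserved along any edge, and by connectedness of the 1-skeleton of the hypercube one may arrange all $g_\varepsilon$ to lie at a common Maslov degree $m_0$ in $\C^\varepsilon$. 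Since $D^{\varepsilon'}$ with $\|\varepsilon'\|=3$ has intrinsic degree $2$, on homology it sends $U^j g_\varepsilon$ (of Maslov degree $m_0-2j$) into Maslov degree $m_0-2j+2 = m_0 - 2(j-1)$ at $\varepsilon+\varepsilon'$, which is exactly the degree of $U^{j-1} g_{\varepsilon+\varepsilon'}$. This forces the stated $U$-shift.

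The anticipated main obstacle is the naturality of the identification in the sense foreshadowed in the remark preceding the lemma: one must ensure that the vertex-to-wedge assignment is compatible with the filtered inclusions of Lemma~\ref{removecomponent}, so that subsequent inductive arguments on sublinks proceed coherently. Once the absolute grading on $\C^\infty(\hyperbox,\Lambda_0,\mathbf{0})$ is chosen, the grading-preservation of edge maps established above passes to every sub-hyperbox corresponding to a sublink, so the constant $m_0$ and the wedge assignment restrict correctly. The explicit identification of $d_3$ with contraction $\iota_{\mu_Y}$ is \emph{not} part of this lemma and will be handled separately in the paper by reducing to $b_1(Y) \leq 3$ on the $3$-dimensional faces of the hypercube.
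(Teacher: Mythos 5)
Your proof is correct and follows the same strategy as the paper's: identify $E_3 \cong E_1$ via Proposition~\ref{firsttwovanish}, match the $2^\ell$ vertices of the hypercube with the basis wedges of $\fexterior(Y)$, and use the degree shift of the hyperbox maps to read off the $U$-power in $d_3$. The paper phrases the grading count directly in the total-complex grading (each destabilization $\Phi^{+K_i}$ drops it by $1$, so an ``element of $H^1$'' carries grading $1$, $U$ carries grading $-2$, and $j=-1$ is forced) rather than via intrinsic Maslov degree, and it is slightly more explicit that the generators $g_\varepsilon$ are well-defined because the $\Phi^{+K_i}_*$ commute on $\partial$-homology, but the argument is essentially identical.
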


\begin{proof}
Because $d_1 = d_2 = 0$, we need only make the identification with the exterior algebra for the $E_1$ term.  Recall from Remark~\ref{vertexhomology} that each vertex of the hypercube has $\partial$-homology isomorphic to $\hfboldinfty(S^3)$ (there is only one $\mathbf{s}$ associated to each $\C^\varepsilon$ by our choice of link and Spin$^c$ structure).  Therefore, the term $E_1^p$, filtration level $p$, of the corresponding spectral sequence will simply be $\binom{\ell}{p}$ copies of $\hfboldinfty(S^3)$.

We can identify the $E_1$ page with $\fexterior \otimes \fpowers$ as follows.  Choose a basis $\{x^i\}$ for $H^1$ such that $x^i$ corresponds to the Hom-dual of the meridian of $K_i$ (this can be done since $H_1$ is torsion free).  Suppose that $\|\varepsilon\| = \ell-k$.  First, identify 1 in $\Lambda^*_{\mathbb{F}}$ with the generator of $H_*(\C^{(1,\ldots,1)},\partial) \cong \hfboldinfty(S^3)$ with fixed absolute grading 0.  We can then choose a generator of $H_*(\C^{\varepsilon(M)},\partial)$ to be the image of 1 after inverting the corresponding sequence of $k$ destabilizations coming from each $K_i \subset M$, $\Phi^{+ K_i}_*$.  We then associate to this element $x^{i_1} \wedge \ldots \wedge x^{i_k}$ in $\Lambda_{\mathbb{F}}^k$, where $\varepsilon_{i_m}=0$ for $1 \leq m \leq k$.

By the definition of a hyperbox of chain complexes, the induced maps on $\partial$-homology, $\Phi^{+ K_i}_*$, commute; so, we see the order does not matter in this construction. Each exterior algebra element lives in the filtration level corresponding to the number of times we have applied a $(\Phi^{+ K}_*)^{-1}$.  However, since the destabilization maps lower the relative grading on $\C$ by 1 (these are components of the differential), we can see that each `element of $H^1(Y)$' has grading 1 and wedge product is additive on grading; furthermore, $U$ still has grading -2.  This therefore establishes the relatively-graded isomorphism of $E_3$ with $\fexterior \otimes \fpowers$.

Recall that $d_3$ lowers filtration level by 3, but it only lowers grading by 1.  In other words, $d_3$ will take $\alpha \otimes 1$, for $\alpha \in \Lambda_{\mathbb{F}}^i$, to $\beta \in \Lambda_{\mathbb{F}}^{i-3} \otimes U^j$ for some $j$.  By out identifications, the grading will be lowered by $3 + 2j$; therefore, $j$ must be $-1$.  Extending this gives $d_3: \Lambda^i_{\mathbb{F}}(Y) \otimes U^j \rightarrow \Lambda^{i-3}_{\mathbb{F}}(Y) \otimes U^{j-1}$, as desired.
\end{proof}

With these identifications, we are now ready to prove the first half of the main theorem.

\begin{proof}[Proof of First Three Differentials in Theorem~\ref{spectralcalc}]
We let $Y$ be presented by 0-surgery on a homologically split link $L$ with $\ell$ components.  Since there is a unique torsion Spin$^c$ structure on $Y$, $\mathfrak{s}_0$, and $\hfboldinfty(Y,\mathfrak{s}) = 0$ for non-torsion $\mathfrak{s}$, $\hfboldinfty(Y)=\hfboldinfty(Y,\mathfrak{s}_0)$.

Since there are three-manifolds with $b_1(Y) = 3$ where $\hfboldinfty$ does not have rank 8, we cannot repeat our arguments to show that all higher differentials vanish.  We will, however, be able to use the same argument to calculate $d_3$; see how it acts on faces and add up the components.  Note that we have identified the $E_3$ pages with $\fexterior \otimes \fpowers$ by associating $H_*(\C^\varepsilon,\partial)$ with span$_{\fpowers}\{x^{i_1} \wedge \ldots \wedge x^{i_k}\}$ where $\varepsilon_{i_m}=0$ for $1 \leq m \leq k = \|\varepsilon\|$.

We now can easily see how the subcomplexes and quotient complexes given by faces of the hypercube fit into this picture via inclusions/projections of $H^1$.  Let's prove that $d_3$ is given by (\ref{differential}).

Again, use $d^{i_1,i_2,i_3}_3$ to represent the components that map from $\C^\varepsilon$ to $\C^{\varepsilon + \tau_{i_1} + \tau_{i_2} + \tau_{i_3}}$ and let $x^{i_1}$ be the Hom-duals of the meridians of $K_i$ in $H^1$ (these naturally exist in $H^1$ of 0-surgery on any homologically split link containing $K_i$).  Consider the three-form, $\mu_{i_1,i_2,i_3}$, on $H^1(Y)$ given by $\langle x^{i_1} \smile x^{i_2} \smile x^{i_3},[S^3_{\mathbf{0}}(K_{i_1} \cup K_{i_2} \cup K_{i_3})]\rangle$  on $x^{i_1} \wedge x^{i_2} \wedge x^{i_3}$ and 0 otherwise.  The induction arguments with filtered morphisms of the previous section also show that $d^{i_1,i_2,i_3}_3$ will be given by interior multiplying $\mu_{i_1,i_2,i_3}$ if it does so for sublinks with $\ell-1$ components; this again follows by the injectivity on the $E_1 = E_3$ terms.

We claim that
\[
\mu_Y = \sum_{i_1 < i_2 <i_3} \mu_{i_1,i_2,i_3}.
\]
This is because the value of $\mu_Y$ on $x^{i_1} \wedge x^{i_2} \wedge x^{i_3}$ is given by the Milnor invariants of $L$, $\bar{\mu}_L(i_1,i_2,i_3)$ (see \cite{turaevcup}), and thus $\mu_{i_1,i_2,i_3}$ takes the value
$\bar{\mu}_{K_{i_1} \cup K_{i_2} \cup K_{i_3}}(i_1,i_2,i_3)$ on $x^{i_1} \wedge x^{i_2} \wedge x^{i_3}$.  Since $K_{i_1} \cup K_{i_2} \cup K_{i_3}$ is a sublink of $L$, their Milnor invariants agree (see, for example, \cite{murasugi}).

This follows by first establishing it on $\C_j$ by naturality of the inclusions of filtered complexes and then identifying $\C/\C_j$ with $\C_j$.

Thus, it suffices to establish that for the base case, $\ell = 3$, $d^{i_1,i_2,i_3}_3 = d_3$ is multiplication by some power of $U$ and $\langle x^1 \smile x^2 \smile x^3 ,[Y]\rangle$.  We can use the calculations of \cite{hfcoho} to do this without much effort.  The $E_3$ page has total rank 8.  However, in that paper, $\hfboldinfty$ is computed to have dimension $8 - 2 \cdot \langle x^1 \smile x^2 \smile x^3 ,[Y]\rangle$ .  Since $d_3$ can only be nonzero on $E_3^3$ by the depth of the filtration, we must study $E_3^3 = \Lambda_{\mathbb{F}}^3 \otimes \fpowers$ and $E_3^0 = \Lambda_{\mathbb{F}}^0 \otimes \fpowers$.  Each has rank 1, generated by $x^1 \wedge x^2 \wedge x^3$ and $1$ respectively in $\fexterior$.  Therefore, we may conclude that $d_3$ sends $x^1 \wedge x^2 \wedge x^3 \otimes U^j$ to $\langle x^1 \smile x^2 \smile x^3, [Y]\rangle \cdot 1 \otimes U^{j-1}$.

This shows that $d_3$ is given by contraction by the integral triple cup product form for $b_1 = 3$, completing the proof.
\end{proof}

Throughout the past two sections, we have only been proving facts about three-manifolds that can be expressed simply as 0-surgery on a homologically split link.  We now see that this was sufficient generality.

\begin{corollary} \label{surgeryequivalenceimplies} Let $\mathfrak{s}$ be a torsion Spin$^c$ structure on a closed, connected, oriented three-manifold $Y$.  Then, $\operatorname{dim}_{\fpowers}\hfboldinfty(Y,\mathfrak{s}) \leq \operatorname{dim}_{\fpowers}\hcinfty(Y)$.  Therefore, the rank of $\hfbold^\infty(Y,\mathfrak{s})$ is at most that of the rank conjectured by Ozsv\'ath and Szab\'o.
\end{corollary}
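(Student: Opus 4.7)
The plan is to reduce the general case to the situation of Theorem~\ref{spectralcalc} and then read off the inequality directly from the first three pages of the spectral sequence. By the surgery equivalence results of Section 2 of \cite{hfcoho}, any pair $(Y,\mathfrak{s})$ with $\mathfrak{s}$ torsion can be replaced by a pair $(Y', \mathfrak{s}_0)$ with $Y' = S^3_{\mathbf{0}}(L)$ for some homologically split link $L \subset S^3$, in a manner that preserves $\hfboldinfty$ on the Floer side and preserves $H^1$ together with the integral triple cup product form on the cohomological side (hence preserves $\hcinfty$). It therefore suffices to prove the inequality when $Y$ is presented as $0$-surgery on a homologically split link.

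In that setting, consider the $\varepsilon$-filtration on $\C^\infty(\hyperbox,\Lambda_0,\mathbf{0})$. The link surgery formula of Manolescu--Ozsv\'ath guarantees that the induced spectral sequence converges to $\hfboldinfty(Y',\mathfrak{s}_0)$. By Proposition~\ref{firsttwovanish}, $d_1 = d_2 = 0$, so $E_3 \cong \fexterior(Y') \otimes \fpowers$ via the identifications of Lemma~\ref{d3calc}. The $d_3$ calculation just completed identifies $(E_3, d_3)$ with the cup homology complex $(\fexterior(Y') \otimes \fpowers, \iota_{\mu_{Y'}})$, so that $E_4$ has dimension precisely $\dim_{\fpowers} \hcinfty(Y')$.

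Since each subsequent differential in a spectral sequence can only decrease the total dimension of a page, we obtain
\[
\dim_{\fpowers} \hfboldinfty(Y',\mathfrak{s}_0) \;=\; \dim_{\fpowers} E_\infty \;\leq\; \dim_{\fpowers} E_4 \;=\; \dim_{\fpowers} \hcinfty(Y').
\]
Combining this with the surgery equivalence reduction yields the stated inequality. The final assertion of the corollary is then automatic, since Conjecture 4.10 of \cite{plumbed} predicts that $HF^\infty(Y,\mathfrak{s})$ has exactly the rank of $\hcinfty(Y)$.

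The argument contains essentially no obstacle beyond bookkeeping: the nontrivial input is the spectral sequence machinery already assembled in Sections 3--5, together with the invocation of the surgery equivalence from \cite{hfcoho}. What remains open at this stage of the paper is the reverse inequality, which requires showing that $d_k$ vanishes for all $k \geq 4$, and that task is deferred to the later inductive argument sketched in the introduction.
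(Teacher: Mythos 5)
Your proof is correct and follows essentially the same route as the paper: reduce to $0$-surgery on a homologically split link via the surgery equivalence of \cite{hfcoho} and \cite{surgeryequivalence}, which preserves both $\hfboldinfty$ and $\hcinfty$, and then observe that since $(E_3,d_3)$ is the cup homology complex, $\dim E_\infty \leq \dim E_4 = \dim \hcinfty$. This is precisely the argument in the paper's proof of Corollary~\ref{surgeryequivalenceimplies}.
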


\begin{proof}
Since the homology predicted by Ozsv\'ath and Szab\'o is isomorphic to $\hcinfty(Y)$, Theorem~\ref{spectralcalc} implies that this inequality of ranks is the case for any three-manifold given by 0-surgery on a homologically split link.  This is because we are comparing two spectral sequences agreeing up to $(E_3,d_3)$, where the higher differentials of one all vanish.  Now let $Y$ be arbitrary.  Choose a homologically split link, $L$, with the property that 0-surgery has integral triple cup product form isomorphic to that of $Y$.  By the work of Cochran, Gerges, and Orr \cite{surgeryequivalence}, such a link exists and $\hcinfty(Y) \cong \hcinfty(S^3_{\mathbf{0}}(L))$.  Furthermore, we know from \cite{hfcoho} that $\hfboldinfty(Y,\mathfrak{s}) \cong \hfboldinfty(S^3_{\mathbf{0}}(L))$.  This proves the result.
\end{proof}




The rest of this paper is now devoted to showing the higher differentials in our spectral sequence vanish, or equivalently, proving the opposite inequality:
\begin{equation} \label{rankinequality}
\operatorname{dim}_{\fpowers} \hfboldinfty \geq \operatorname{dim}_{\fpowers} \hcinfty \tag{+}
\end{equation}

\section{Composing Knots and Complexities of Links}

In order to prove (\ref{rankinequality}), it is necessary to again proceed inductively.  However, we must induct on something more complicated than simply $b_1$.  We will assume that the higher differentials vanish when $b_1 \leq \ell-1$ (this is automatic for $b_1 \leq 3$), but we will need a way to seemingly induct on the set of homologically split links with $\ell$ components.  Recall that two three-manifolds are surgery equivalent if there is a finite sequence of $\pm 1$-surgeries on nullhomologous knots taking one manifold to the other (see \cite{surgeryequivalence}).  As mentioned before, we only need to prove (\ref{rankinequality}) for a single representative of each surgery equivalence class of three-manifold with $H_1(Y) \cong \mathbb{Z}^\ell$ (Section 2 of \cite{hfcoho}); we will take this liberty and change our links around to make them more suitable to the link surgery formula.

All components will have framing 0, so we will not distinguish between the link and the resulting manifold obtained by 0-surgery, or between $b_1$ and the number of components.  Thus, we will make statements like surgery equivalent links to mean that the manifolds obtained by 0-surgery on each link are surgery equivalent.  Also, $\mathfrak{s}_0$ will always refer to the unique torsion Spin$^c$ structure on the underlying manifold.

We let $L_1 \coprod L_2$ indicate that the two links are separated by an embedded 2-sphere (and both links will always be nonempty when using this notation).  Begin with an $\ell$-component homologically split link $L$.  Order the components $K_1,\ldots, K_\ell$; we refer to the Milnor linking invariants as $\bar{\mu}_L(i,j,k)$ or $\bar{\mu}_L(K,K',K'')$ if the indices are unclear.  We always assume that the three indices are all distinct.  We will also usually assume $i<j<k$, but may switch this at the reader's inconvenience to keep notation simple.  Note that changing the order of the indices does not change $\bar{\mu}$ mod 2.

\begin{example} \label{c(L)=1} Suppose that $\bar{\mu}_L(1,2,3) = n$ and all $\bar{\mu}$ vanish for all other triples of indices.  Let $L' = K_1 \cup K_2 \cup K_3$.  Then we know that $L$ is surgery equivalent to a link of the form $(L-L') \coprod L'$ \cite{surgeryequivalence}.  Given (\ref{rankinequality}) for all links with at most $\ell-1$ components, the connect-sum formula will guarantee that $\hfboldinfty(S^3_{\mathbf{0}}(L)) \cong \hfboldinfty(S^3_{\mathbf{0}}(L')) \otimes \hfboldinfty(S^3_{\mathbf{0}}(L-L'))$; since this formula also holds for $\hcinfty$ with $\fpowers$ coefficients (see the proof of Theorem 2 in \cite{thommark}), this proves Theorem~\ref{maintheorem} for $L$ as well.
\end{example}

With this example in mind, we define a complexity of $L$ with the hope that a reduction in complexity makes the link closer to being split.  This is defined as
\begin{equation}
c(L) = \# \{(i,j,k): \bar{\mu}_L(i,j,k) \neq 0\text{, } 1\leq i<j<k \leq \ell\}
\end{equation}

Let's study the links with the simplest $c$-complexity first.

\begin{remark} \label{easycomplexity}  If $c(L) = 0$ or $c(L)=1$, then we know how to complete the proof from Example~\ref{c(L)=1}.  If $c(L) \geq 2$, then there are two options.  Either $L$ is surgery equivalent to some $L_1 \coprod L_2$ or there exists some component $K_i$ which has $\bar{\mu}_L(i,j,k)$ nonzero for at least two different pairs $(j,k)$ (reordering of $(i,j,k)$ possibly necessary).  If it splits as $L_1 \coprod L_2$, then again we are done by the connect-sum formulae.
\end{remark}

For a fixed $b_1$ we will induct on the $c$-complexity.  To keep sight of the final goal, the plan for the rest of the paper will be to prove the following theorem similar to the method of composing knots in \cite{hfcoho}.

\begin{theorem} \label{connectsumimplies} Suppose $K = K' \# K''$ is a component of $L$.  If (\ref{rankinequality}) holds for $(L-K) \cup K'$ and $(L-K) \cup K''$, then it will hold for $L$.
\end{theorem}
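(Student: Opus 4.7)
The plan is to exhibit the surgery complex $\C^\infty(L, \Lambda_0, \mathbf{0})$ as a mapping-cone-type combination built from the complexes for $L' = (L-K) \cup K'$ and $L'' = (L-K) \cup K''$, then invoke the induction hypothesis together with Corollary~\ref{surgeryequivalenceimplies}. The first useful observation is that Milnor's triple linking invariants depend only on the pairwise linking numbers of components, so replacing $K$ by $K'$ or $K''$ leaves the integral triple cup product form of the $0$-surgery manifold unchanged. Consequently $\dim_{\fpowers} \hcinfty$ agrees for all three links, and to prove (\ref{rankinequality}) for $L$ it suffices to produce a lower bound $\dim_{\fpowers} \hfboldinfty(Y, \mathfrak{s}_0) \geq \dim_{\fpowers} \hfboldinfty(Y', \mathfrak{s}_0)$ (or equivalently, to show the higher differentials in the $\varepsilon$-filtration spectral sequence for $L$ vanish).

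First I would construct a complete system of hyperboxes $\hyperbox$ for $L$ compatible with chosen complete systems $\hyperbox'$ and $\hyperbox''$ for $L'$ and $L''$. Concretely, I would take doubly-pointed Heegaard diagrams for $K'$ and $K''$ and connect-sum them near their basepoints to obtain a diagram for $K$, then graft in the rest of $L - K$ outside the connect-sum region. The intended payoff is twofold: the $\mathfrak{A}^\infty$ complexes at vertices of the hypercube where $K$ has not yet been destabilized should decompose via the standard K\"unneth-type formula for knot Floer under connect sum, suitably adapted to the formal variables $U_i$ and the Alexander weights $E^i_{s_i}(\phi)$ appearing in the surgery formula; and the destabilization maps $\Phi^{\pm K}$ in the $L$-complex should factor through analogous destabilization maps associated to $K'$ and $K''$ from the $L'$- and $L''$-complexes.

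Second, using this decomposition I would compare the three $\varepsilon$-filtered complexes. By Proposition~\ref{firsttwovanish} and Lemma~\ref{d3calc}, the $E_3$ pages of the three induced spectral sequences are canonically isomorphic to $\fexterior(Y) \otimes \fpowers$ carrying the same $d_3$ (since the triple cup product forms agree). By hypothesis (\ref{rankinequality}) holds for $L'$ and $L''$, which combined with Corollary~\ref{surgeryequivalenceimplies} forces the spectral sequences for $L'$ and $L''$ to collapse at $E_4$. The chain-level relationship from step one would then propagate this collapse to the spectral sequence for $L$: either directly, by identifying higher differentials on $\C^\infty(L,\Lambda_0,\mathbf{0})$ with compositions of higher differentials on the $L'$- and $L''$-complexes via the K\"unneth-type decomposition, or indirectly, by producing a filtered quasi-isomorphism between appropriate associated graded pieces in the spirit of Lemma~\ref{pageiso} and Lemma~\ref{removecomponent}.

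The main obstacle I anticipate is step one: arranging $\hyperbox$ so that it simultaneously satisfies all Manolescu-Ozsv\'ath compatibility conditions across the various pairs $(\vec{L'},\vec{M})$ of sublinks and exhibits the desired K\"unneth-type factorization on the nose (not merely up to homotopy). The nested compatibility requirements entangle hyperboxes for different sublinks, and verifying that the connect-sum operation on diagrams can be executed consistently across all of them will involve careful diagrammatic bookkeeping. Controlling the extra $w$-basepoints produced by the stabilization inherent in the connect-sum construction, together with the resulting holomorphic polygon counts in the enlarged Heegaard multi-diagrams used to define the higher $D^\varepsilon$, is the piece I expect to demand the most work.
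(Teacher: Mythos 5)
Your proposal goes wrong immediately on a fundamental point: you claim that Milnor's triple linking invariants depend only on pairwise linking numbers, so that replacing $K$ by $K'$ or $K''$ leaves the integral triple cup product form unchanged. This is false --- for homologically split links all pairwise linking numbers vanish, yet $\bar\mu(i,j,k)$ can be any integer (the Borromean rings, for instance, have $\bar\mu=1$). Indeed, the entire construction in Proposition~\ref{splitknots} is designed so that $(L-K)\cup K'$ and $(L-K)\cup K''$ have \emph{strictly smaller} complexity than $L$, which precisely means their triple cup product forms differ. Consequently your assertion that $\dim_{\fpowers}\hcinfty$ agrees for all three links is also false, and the plan of ``transporting'' collapse across canonically isomorphic $(E_3,d_3)$ complexes cannot get off the ground: $d_3$ is different for the three links, and this difference is not a nuisance but the crux of the problem.

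Beyond this, the second half of the proposal --- producing a K\"unneth-type factorization of the surgery complex by connect-summing Heegaard diagrams and then propagating collapse ``either directly $\ldots$ or indirectly'' --- is a hope rather than an argument, and you flag the hard step as an obstacle without resolving it. The paper takes a genuinely different route: it replaces the component $K=K'\#K''$ by a three-component configuration (Kirby-calculus picture in Figure~\ref{kirbydiagram}), uses a sequence of filtrations to collapse the now $(\ell+2)$-dimensional surgery hypercube to the small diagram of Proposition~\ref{finalshape}, and applies the linear-algebra Lemma~\ref{gluinglemma} to reduce everything to computing the rank of one composite map $\Psi^{K_1,K_2}$. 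Working in a basic system then makes $\Gamma^{+K_i}_*$ explicit (contraction by the dual of $x^1$) and $\Gamma^{-K_i}_*=\operatorname{Id}+(d^{K_i}_3)_*$, so $\Psi^{K_1,K_2}=(d^K_3)_*+(d^{K_1}_3)_*\circ(d^{K_2}_3)_*$, and Proposition~\ref{finalproposition} shows the cross-term cannot shrink the kernel of $(d^K_3)_*$. That is where the rank inequality actually comes from; nothing in your proposal substitutes for this calculation.
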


We are therefore led to the following proposition.

\begin{proposition} \label{splitknots}
Suppose $c(L) \geq 2$ and that $L$ is not surgery equivalent to any $L_1 \coprod L_2$.  Let $K_r$ be a component with at least two different pairs $(s,t)$ such that $\bar{\mu}_L(r,s,t)$ are nonzero.  Then, there is an ordered $\ell$-component link $\tilde{L}$ with the following two properties.  First, $\bar{\mu}_L(i,j,k) = \bar{\mu}_{\tilde{L}}(i,j,k)$ for all $i,j,k$.  Second, there is a knot $K \subset \tilde{L}$ which we can express as $K' \# K''$, where $c((\tilde{L}-K) \cup K')<c(L)$ and $c((\tilde{L}-K) \cup K'')<c(L)$.
\end{proposition}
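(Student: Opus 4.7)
The plan is to exploit the Cochran--Gerges--Orr classification of homologically split links up to surgery equivalence in order to replace $L$ by a link $\tilde{L}$ with identical triple Milnor invariants but a more rigid combinatorial structure. Specifically, we realize $\tilde{L}$ as an unlink together with a disjoint collection of Borromean claspers, each localized in a small $3$-ball in which three of the components $K_i$, $K_j$, $K_k$ are threaded in a Borromean pattern, and each contributing $\pm 1$ to the single invariant $\bar{\mu}_{\tilde{L}}(i,j,k)$. Assigning $|\bar{\mu}_L(i,j,k)|$ claspers of appropriate sign to each nonzero triple produces $\tilde{L}$ with $\bar{\mu}_{\tilde{L}} = \bar{\mu}_L$.

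Next, we isotope $K_r$ so that the claspers touching it are encountered one at a time along $K_r$, separated by long trivial sub-arcs. Since $K_r$ appears in at least two nonzero triples $(r,s_1,t_1)$ and $(r,s_2,t_2)$, there are at least two claspers on $K_r$; partition them into two nonempty groups $A$ and $B$ so that $A$ contains all $(r,s_1,t_1)$-claspers and $B$ contains all $(r,s_2,t_2)$-claspers (remaining claspers on $K_r$ may be distributed to either side). We then choose a $2$-sphere $S$ meeting $K_r$ transversely at two points in a trivial sub-arc that separates $A$ from $B$. The resulting connect-sum decomposition $K_r = K' \# K''$, with $K'$ equal to the portion of $K_r$ on the $A$-side of $S$ closed by a small arc near $S$ (and $K''$ defined symmetrically), gives the required $K = K_r$ and its summands.

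To verify that the complexity strictly decreases on both sides, observe that triples $(i,j,k)$ not involving $r$ are unaffected. For a triple $(r,s,t)$ whose claspers all lie on the $A$-side of $S$, the new component $K'$ still threads through them, so $\bar{\mu}_{(\tilde{L}-K)\cup K'}(r,s,t) = \bar{\mu}_{\tilde{L}}(r,s,t)$; for a triple whose claspers lie on the $B$-side, $K'$ no longer interacts with them and the corresponding invariant vanishes. The closing arc can be routed so that $K'$ (and symmetrically $K''$) has vanishing linking number with every other component and introduces no new nonzero Milnor invariants, preserving homological-splitness. Since the $(r,s_2,t_2)$-claspers lie in $B$, at least one previously nonzero triple becomes zero, so $c((\tilde{L}-K)\cup K') < c(L)$; the argument for $(\tilde{L}-K)\cup K''$ is symmetric.

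The main technical obstacle is the Milnor-invariant bookkeeping in the presence of shared components: the sphere $S$ must intersect every component $K_s$ that threads through claspers on both sides, and one must check that the closing arcs for $K'$ and $K''$ do not create spurious linkings with such $K_s$ or alter any $\bar{\mu}$ beyond what is claimed. This is handled by adjusting the closing arcs within their respective half-balls using the freedom to route them around the other components, and the resulting triple-Milnor computation follows from a Seifert-surface intersection count (or equivalently from a Massey product calculation) localized to the Borromean $3$-balls, where the contribution of each clasper depends only on whether the modified component threads through it.
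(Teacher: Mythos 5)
Your approach is genuinely different from the paper's, and it is close in spirit but leaves the crucial step as an assertion rather than a proof. The paper avoids the ``Milnor-invariant bookkeeping'' problem you flag rather than solving it head-on: it builds a separate link $L'$ (with $\bar{\mu}_{L'}(r,j_2,k_2)$ set to $0$ so that $c(L') = c(L)-1$) and a geometrically split $\ell$-component link $L^*$ consisting of an $(\ell-3)$-component unlink plus a three-component sublink carrying exactly the deleted invariant, then forms $\tilde{L}$ by band-summing $L'$ and $L^*$ across a 2-sphere that is crossed \emph{exactly once} by each band. With that geometry in place, the Milnor triple invariants of $\tilde{L}$ are literally the sum $\bar{\mu}_{L'} + \bar{\mu}_{L^*} = \bar{\mu}_L$ by Cochran's additivity theorem (Theorem 8.13 of \cite{cochranderivatives}), and the two components $K' = K'_r$, $K'' = L^*_r$ of the connect sum carry $c(L)-1$ and at most $c(L)-1$ active triples respectively, with no closing-arc analysis needed. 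Your construction instead realizes $\tilde{L}$ directly with disjoint claspers and cuts $K_r$ with a sphere $S$ separating the $A$-claspers from the $B$-claspers; the other components then cross $S$ an uncontrolled number of times, so $S$ is a connect-sum sphere for the knot $K_r$ but \emph{not} for the link, and Cochran's theorem no longer applies as stated.

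The gap is therefore concrete: you must show (i) that $K'$ and $K''$, with their closing arcs, remain homologically split from $\tilde{L}-K$; (ii) that the triples $\bar{\mu}_{(\tilde{L}-K)\cup K'}(K',s,t)$ equal the sum of $A$-clasper contributions and that the $B$-side claspers do not introduce any residual pairwise linking of $K_s$ with $K_t$ or any spurious triple invariants once the $K_r$ arc is replaced by nothing; and (iii) that the claspers along $K_r$ can really be rearranged into a linear order compatible with a separating sphere. You name these as ``the main technical obstacle'' and then dispose of them with ``adjusting the closing arcs \ldots{} using the freedom to route them around the other components'' and an unperformed ``Seifert-surface intersection count.'' The Brunnian property of the Borromean pattern is the right intuition for (ii), but the closing-arc ambiguity is exactly what can alter linking numbers and hence $\bar{\mu}$, and nothing in your argument controls it. The cleanest repair is to arrange that $S$ meets \emph{every} component exactly twice (isotope each $K_s$ to visit all $A$-claspers before all $B$-claspers), so that $\tilde{L}$ is visibly a band connect-sum of two geometrically split links along $S$; at that point Cochran's theorem applies and your argument becomes a variant of the paper's. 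Without that reduction, the claimed vanishing of the $B$-side invariants in $(\tilde{L}-K)\cup K'$, and in particular the inequality $c((\tilde{L}-K)\cup K') < c(L)$, is not established.
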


Before proving Proposition~\ref{splitknots}, we now see how this will be applied in conjunction with Theorem~\ref{connectsumimplies}.

\begin{proof}[Proof of Theorem~\ref{maintheorem}]
For fixed $b_1=\ell$, we induct on $c$. By Remark~\ref{easycomplexity}, we need only concern ourselves with the case where $c(L) \geq 2$ and $L$ does not decompose as two geometrically split links.  Apply the proposition to replace $L$ by $\tilde{L}$.  Since $\bar{\mu}_L(i,j,k)=\bar{\mu}_{\tilde{L}}(i,j,k)$, $L$ and $\tilde{L}$ will be surgery equivalent.  It now suffices to prove (\ref{rankinequality}) on $\tilde{L}$.

Decompose the component $K$ as $K' \# K''$.  Since $\tilde{L} - K  \cup K'$ and $\tilde{L} - K \cup K''$ have strictly smaller $c$-values, (\ref{rankinequality}) holds for each of these.  Theorem~\ref{connectsumimplies} completes the proof of (\ref{rankinequality}) for all $b_1 = \ell$.  Applying Theorem~\ref{spectralcalc} shows that for 0-surgeries on homologically split links $\hfboldinfty$ and $HC^\infty$ have the same rank; the relative gradings also agree simply because the complexes $(E_3,d_3)$ and $C_*^\infty$ agree on relative gradings.  Having this for 0-surgeries on homologically split links gives the relatively-graded isomorphism for arbitrary three-manifolds by repeating the arguments from the proof of Corollary~\ref{surgeryequivalenceimplies}.
\end{proof}

We now recall a helpful theorem of Cochran describing the $\bar{\mu}$-invariants of connect sums.

\begin{theorem} \label{connectsumadds}(Theorem 8.13 of \cite{cochranderivatives}) Suppose $L$ and $L'$ are $\ell$-component links that are separated by an embedded 2-sphere and that $\bar{\mu}_L(J) = \bar{\mu}_{L'}(J) = 0$ for multi-indices $J$ of length at most $\ell$.  Construct $L \# L'$ by connecting each pair of components $L_i$ and $L'_i$ with a band that passes through the separating sphere exactly once.  Then $\bar{\mu}_{L \# L'}(I) = \bar{\mu}_{L}(I) + \bar{\mu}_{L'}(I)$ for any multi-index $I$ of length at most $\ell+1$.
\end{theorem}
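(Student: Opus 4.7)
The plan is to prove this using Milnor's original realization of the $\bar{\mu}$-invariants as Magnus coefficients of longitudes in the free nilpotent quotient of the link group, combined with a Van Kampen analysis of $\pi_1(S^3 - (L \# L'))$ along the separating sphere.

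First I would set up the algebraic framework. Let $F = \langle x_1, \ldots, x_\ell \rangle$ be the free group on symbols corresponding to meridians. By Stallings' theorem on lower central series, the inclusion of meridians into each of $G := \pi_1(S^3 - L)$, $G' := \pi_1(S^3 - L')$, and $G'' := \pi_1(S^3 - (L \# L'))$ induces an isomorphism $F/F_k \cong G/G_k$ (and similarly for $G'$ and $G''$) on every nilpotent quotient. Under this identification the Milnor invariants of length $k+1$ with last index $i$ appear as the degree-$k$ Magnus coefficients of the image of the $i$th longitude in this common quotient.

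Next I would compute the longitudes of the band-connect-sum in terms of the longitudes of the pieces. Cutting $S^3 - (L \# L')$ along the separating sphere (now punctured by the $\ell$ bands) decomposes $G''$ as an amalgamated product of $G$ and $G'$ over a free group of rank $\ell$ generated by the meridians of the bands; these meridians are identified with both the $i$th meridian of $L$ and the $i$th meridian of $L'$. Tracing the $i$th longitude of $L \# L'$, one travels once around $L_i$, crosses the $i$th band, travels around $L_i'$, and returns, which shows that $\lambda_i'' = (w_i \lambda_i w_i^{-1}) \cdot \lambda_i'$ in $F$ for some word $w_i$ coming from the chosen basepoint-to-band path. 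The additivity is then an immediate consequence of the hypothesis: the vanishing of all $\bar{\mu}_L(J), \bar{\mu}_{L'}(J)$ for $|J| \leq \ell$ translates, via Magnus, into $\lambda_i, \lambda_i' \in F_\ell$. Because $F_\ell/F_{\ell+1}$ is abelian and conjugation by $w_i$ shifts an element of $F_\ell$ only by a commutator lying in $F_{\ell+1}$, the image of $\lambda_i''$ in $F_\ell / F_{\ell+1}$ is simply $\lambda_i + \lambda_i'$. Reading off degree-$\ell$ Magnus coefficients yields $\bar{\mu}_{L \# L'}(I) = \bar{\mu}_L(I) + \bar{\mu}_{L'}(I)$ for multi-indices of length $\ell + 1$; for shorter $|I|$ both sides are already zero by hypothesis, so the identity holds for $|I| \leq \ell + 1$.

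The main obstacle will be the middle step: carrying out the Van Kampen computation to express $\lambda_i''$ explicitly as a word in $\lambda_i$ and $\lambda_i'$, and checking that under the Stallings isomorphism the meridian of $(L \# L')_i$ is genuinely identified (not merely conjugate to) both the $i$th meridian of $L$ and the $i$th meridian of $L'$. This requires a consistent choice of basepoint and of basepoint-to-meridian arcs crossing the separating sphere, essentially Cochran's geometric bookkeeping. Once this identification is justified, the remaining algebra, which amounts to multiplying two series whose lowest nontrivial terms lie in degree $\ell$, is entirely formal.
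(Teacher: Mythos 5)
The paper does not prove this theorem; it is quoted from Cochran \cite{cochranderivatives}, so there is no internal argument to compare against. Your plan via Magnus coefficients of longitudes in the Stallings quotient is the classical approach, and the algebraic half of it is correct and complete: by Magnus's theorem, vanishing of $\bar{\mu}$ through length $\ell$ places each longitude $\lambda_i,\lambda_i'$ in $F_\ell$; conjugation by any $w \in F$ changes such an element only by something in $[F,F_\ell] \subset F_{\ell+1}$; and the Magnus expansion of a product of two words each of the form $1$ plus terms of degree $\geq \ell$ has cross-terms only in degree $\geq 2\ell$, so the degree-$\ell$ coefficients add. Note that this same commutator observation already disposes of the worry you raise at the end about whether the meridians coming from the two sides are ``genuinely identified'' or ``merely conjugate'' under Stallings: any ambiguity by conjugation cannot matter modulo $F_{\ell+1}$.

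The genuine gap is the longitude relation $\lambda_i'' = (w_i \lambda_i w_i^{-1})\lambda_i'$ itself, which you assert but do not derive, and the Van Kampen picture offered in support of it is off. Cutting $S^3 - (L \# L')$ along the separating sphere cuts along $S^2$ minus $2\ell$ punctures (each component of $L \# L'$ meets the sphere in two points where its band passes through), whose fundamental group is free of rank $2\ell - 1$, not $\ell$; the $\ell$ band-meridians generate only a proper subgroup of it, and the inclusion maps into the two pieces need not inject, so what one gets is a pushout rather than an honest amalgamated free product of $G$ and $G'$ over a rank-$\ell$ free group. The longitude relation is true, but to establish it one must trace a parallel push-off of the $i$th component through both balls while keeping careful track of the basepoint arcs across the punctured sphere and of the framings induced on the two arcs by the $0$-framed longitude of the band sum. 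That is precisely the ``geometric bookkeeping'' you name at the end, and invoking Cochran for it rather than carrying it out is where the proposal stops short of being a proof.
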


Therefore, in the case of two homologically split links, $\bar{\mu}_{L \# L'}(i,j,k) = \bar{\mu}_L(i,j,k) + \bar{\mu}_{L'}(i,j,k)$.

\begin{proof} [Proof of Proposition~\ref{splitknots}] By hypothesis, we may consider two distinct pairs $(j_1,k_1)$ and $(j_2,k_2)$ such that $\bar{\mu}_L(r,j_1,k_1)$ and $\bar{\mu}_L(r,j_2,k_2)$ are nonzero for $L$.  Construct an $\ell$-component homologically split link $L'$ with an ordering on the components such that

\begin{equation*}
\bar{\mu}_{L'}(a,b,c) = \left\{
\begin{array}{rl}
\bar{\mu}_L(a,b,c) & \text{if } (a,b,c) \neq (r,j_2,k_2),\\
0 & \text{if } (a,b,c) = (r,j_2,k_2).
\end{array} \right.
\end{equation*}
Such a link can be explicitly constructed by repeated applications of ``Borromean braiding" (see Corollary 3.5 of \cite{surgeryequivalence} for more details).

Next, isotope a small arc from both $K'_{j_2}$ and $K'_{k_2}$ out and away from the rest of the diagram for $L'$, and isotope the arc from $K'_{j_2}$ such that it creates $\bar{\mu}_L(r,j_2,k_2)$ twists.  We now take an unknot, $U$, and thread it through the twists of $K'_{j_2}$ and through $K'_{k_2}$ as in Figure~\ref{threadknot}.  This three-component sublink $(U,K'_{j_2},K'_{k_2})$ has Milnor invariants equal to $\bar{\mu}_L(r,j_2,k_2)$.

\begin{figure}[ht!]
\labellist
\small
\pinlabel $K'_{j_2}$ at 75 180
\pinlabel $K'_{k_2}$ at 650 220
\pinlabel $U$ at 375 20
\scriptsize
\pinlabel $\bar{\mu}_L(r,j_2,k_2)$ at 310 280
\pinlabel twists at 312 260
\endlabellist
\qquad\qquad\quad\includegraphics[scale=.34]{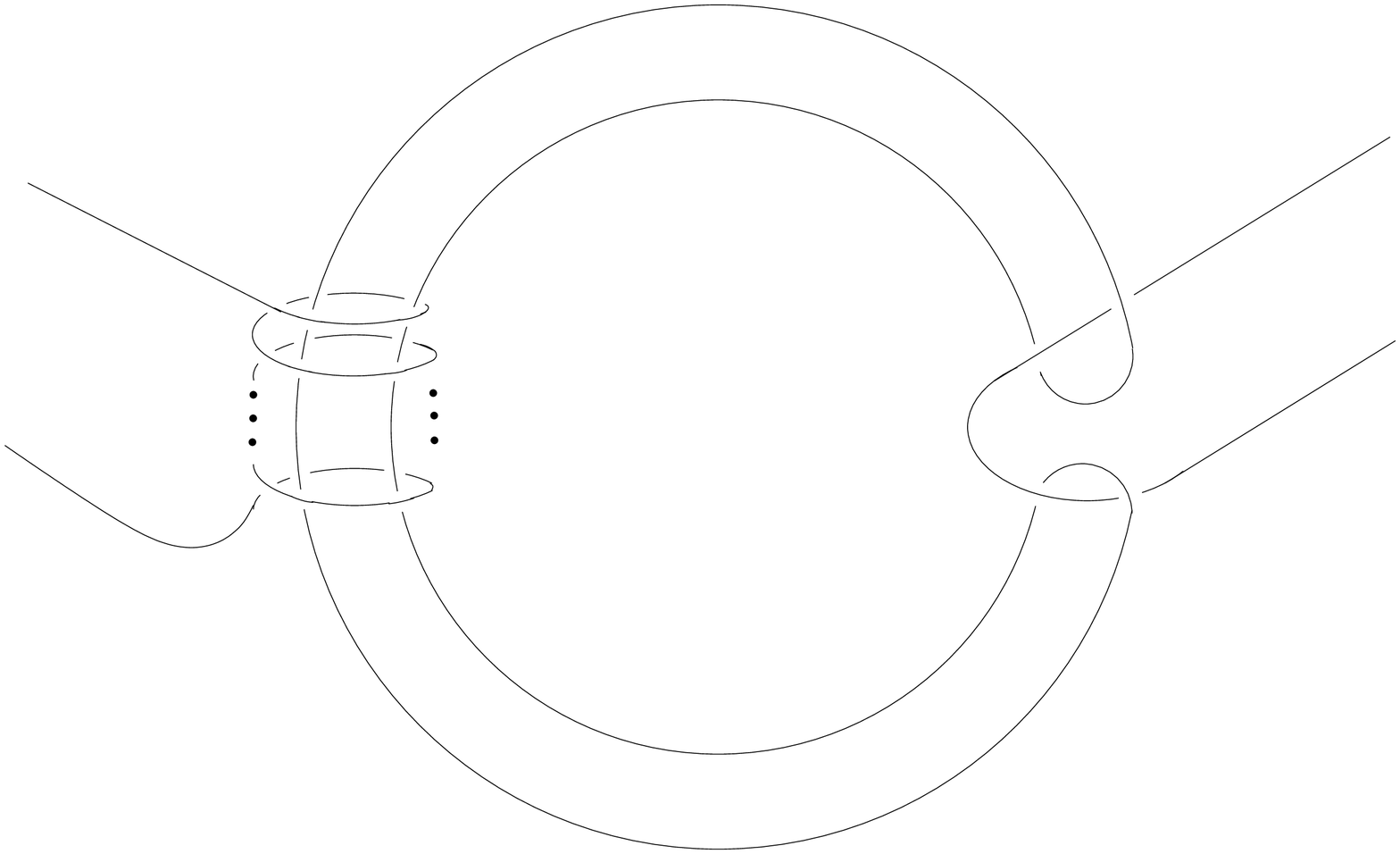}
\caption{Threading the unknot to recreate $\bar{\mu}_L(r,j_2,k_2)$} \label{threadknot}
\end{figure}

We will choose $K'$ to be $K'_r$ in $L'$ and $K'' = U$.  From this we can see that $\bar{\mu}_{(L'-K') \cup K''}(K'',K'_{j_2},K'_{k_2}) = \bar{\mu}_L(r,j_2,k_2)$ and all other $\bar{\mu}_{(L'-K') \cup K''}(K'',\cdot,\cdot)$ vanish.  We now want to see that the connect sum $K=K' \# K''$ yields a link $\tilde{L} = (L' - K') \cup K$ with all $\bar{\mu}_{\tilde{L}}(a,b,c) = \bar{\mu}_L(a,b,c)$.

To show this, it suffices to prove that $\tilde{L}$ can be constructed by connecting two geometrically split $\ell$-component links by bands between pairs of components which intersect the separating 2-sphere exactly once; furthermore, we require that the $\bar{\mu}$-invariants for these two links add up to $\bar{\mu}_L$.  The result will then follow from the additivity of $\bar{\mu}$ in Theorem~\ref{connectsumadds}.

We choose our two links as follows.  The first link will be $L'$.  The other link is an $\ell$-component link, $L^*$, consisting of two split sublinks: a three-component homologically split sublink with $\bar{\mu}_{L^*}(r,j_2,k_2) = \bar{\mu}_L(r,j_2,k_2)$ and an $(\ell-3)$-component unlink, so all other invariants vanish.  Clearly the values of $\bar{\mu}$ add up as expected and Figure~\ref{linksum} demonstrates how we can connect them to obtain $\tilde{L}$ with $K = K' \# K''$.  Note that $L^*_r$ is what creates $K''$ in $\tilde{L}$.

\begin{figure}[ht!]
\labellist
\small
\pinlabel $L'$ at 355 131
\pinlabel $L^*$ at 615 128
\pinlabel $L^*_r$ at 545 475
\pinlabel $L^*_{j_2}$ at 860 430
\pinlabel $L^*_{k_2}$ at 860 535
\pinlabel $K'_{k_2}$ at 210 590
\pinlabel $K'_{j_2}$ at 430 560
\pinlabel $K'_{j_1}$ at 110 325
\pinlabel $K'_{k_1}$ at 295 270
\pinlabel $K'_r$ at 135 440
\pinlabel $S^2$ at 490 70
\endlabellist
\quad\includegraphics[scale=.34]{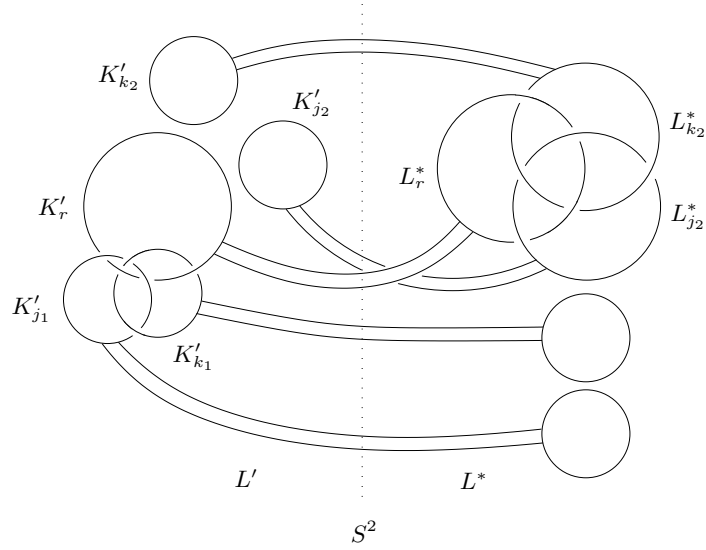}
\caption{Expressing $\tilde{L}$ as the connect-sum of $L'$ and $L^*$} \label{linksum}
\end{figure}

By construction, both $c((\tilde{L}-K) \cup K')$, which equals $c(L')$, and $c((\tilde{L}-K) \cup K'')$ are strictly less than $c(L)$.  This completes the proof.
\end{proof}

\begin{remark}
Because $L$ and $\tilde{L}$ produce indistinguishable cup homology and $\hfboldinfty$, we will use $L$ to in fact refer to $\tilde{L}$ for the remainder of the paper.
\end{remark}

\section{Constructing and Chopping Down the Complex}
Recall that our goal is to prove Theorem~\ref{connectsumimplies}; this loosely said that knowing the higher differentials vanish for a link with a component replaced by $K'$ or $K''$, then this holds after instead replacing with $K' \# K''$.  We now construct a complex which contains all of the Heegaard Floer information of $K'$ and $K''$ simultaneously, where we have identified $K = K' \# K''$ as the component to reduce complexity at.  With this we will be able to use our inductive knowledge for $K'$ and $K''$ to produce the desired result.  The way that this is done is via a standard Kirby calculus trick (see, for example, \cite{saveliev}); we express 0-surgery on $K$ as 0-surgery on three components: $K'$, $K''$, and an unknot $U$ geometrically linking each once as shown in Figure~\ref{kirbydiagram}.

\begin{figure}[h]
\labellist
\small
\pinlabel $U$ at 95 245
\pinlabel $0$ at 275 180
\pinlabel $K'$ at 57 35
\pinlabel $0$ at 165 180
\pinlabel $K''$ at 380 35
\pinlabel $0$ at 220 80
\endlabellist
\qquad\qquad\qquad\quad\quad\includegraphics[scale=.42]{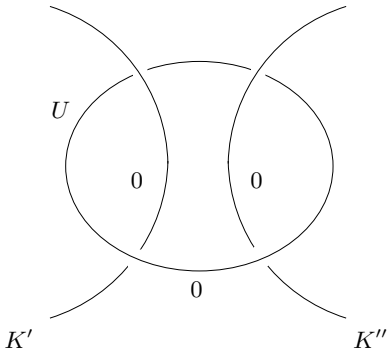}
\caption{An equivalent diagram for 0-surgery on $K' \#K''$} \label{kirbydiagram}
\end{figure}

Thus, if our link has $\ell$ components, then the corresponding link that we would like to study will have $\ell+2$.  As further abuse of notation we will now call this link $L$, since 0-surgery results in the same manifold.  The framing $\Lambda$ will change as well due to the algebraic linking that has been introduced.  Reorder the components in such a way that $K'$, $K''$, and $U$ are the first, second, and third components respectively.  For notational purposes, we will relabel these as $K_1$, $K_2$, and $K_3$.  This three-component sublink will arise often, so we will refer to it as $W$.  We see that $\Lambda_1 = \Lambda_2 = (0,0,1,0, \ldots, 0)$ and $\Lambda_3 = (1,1,0,\ldots,0)$.  Therefore, the equivalence class in $\mathbb{H}(L)$ corresponding to $\mathfrak{s}_0$ will in fact be a 2-dimensional lattice spanned by $\Lambda_1$ and $\Lambda_3$; in fact, $\mathfrak{s}_0 = [(\frac{1}{2},\frac{1}{2},1,0,\ldots,0)]$.

By inducing the proper filtrations and removing acyclic complexes, we will significantly cut down the size of $\C^\infty(\hyperbox,\Lambda,[(\frac{1}{2},\frac{1}{2},1,0,\ldots,0)])$ to a smaller finite-dimensional object.
Before continuing, we remark that the reader interested in this proof should first try to follow the calculation of $\mathbf{HF}^-$ for surgeries on the Hopf link via the link surgery formula (Section 8.1 of \cite{hflz}), as the arguments will be based on this.  Let's also recall the simplified notation used in that computation.  We let ${\varepsilon_1 \varepsilon_2 \hdots \varepsilon_{\ell+2}}_{\mathbf{s}}$ represent the complex $\mathfrak{A}^\infty(\hyperbox^{L-M},\psi^M(\mathbf{s}))$ where $K_i$ is in $M$ if and only if $\varepsilon_i=1$.  To shorten notation further in our setting, we will use $\varepsilon_1 \varepsilon_2 \varepsilon_3 *_{(s_1,s_2,s_3)}$ to denote the hypercube of chain complexes at $(s_1,s_2,s_3,0,\ldots,0)$ with $\varepsilon_1$, $\varepsilon_2$, $\varepsilon_3$ fixed, but all remaining $\varepsilon_i$ free.  We are setting the last components of $\mathbf{s}$ to be 0 since this corresponds to choosing the unique torsion Spin$^c$ structure on $S^3_{\mathbf{0}}(L-W)$.  In fact, $111*_{(s_1,s_2,s_3)}$ is exactly the complex corresponding to 0-surgery on $L - W$ with the torsion Spin$^c$ structure.

A key map that we will study is $\Gamma^{\pm K_i} = \sum_{\vec{N} \subset L-W} \Phi^{\pm K_i \cup \vec{N}}$.  Here we are summing over all possible orientations of sublinks $N$, whereas the previous maps $\mathcal{S}^{+K_i}$ only allowed for $\vec{N}=+N$.

\begin{proposition} \label{finalshape} The complex for 0-surgery on all components in $L$ with Spin$^c$ structure $\mathfrak{s}_0$, $\C=\C^\infty(\hyperbox,\Lambda,[(\frac{1}{2},\frac{1}{2},1,\ldots,0)])$, is quasi-isomorphic to

\[
\xymatrix{
001*_{(\frac{1}{2},\frac{1}{2},1)} \ar[rr]^{\Gamma^{+K_1}} \ar[rrd]^{\Gamma^{+K_2}} && 101*_{(\frac{1}{2},\frac{1}{2},1)} && 001*_{(\frac{1}{2},\frac{1}{2},0)} \ar[ll]_{\Gamma^{-K_1}} \ar[lld]_{\Gamma^{-K_2}} \\	&& 011*_{(\frac{1}{2},\frac{1}{2},1)} \\					
}
\]
\end{proposition}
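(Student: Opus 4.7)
The plan is to model the argument on the Hopf-link computation carried out in Section 8.1 of \cite{hflz}, generalized from a knot-plus-unknot to the three-component sublink $W=K_1\cup K_2\cup K_3$ sitting inside the $(\ell+2)$-component link $L$. The total complex $\C$ splits according to $(\varepsilon_1,\varepsilon_2,\varepsilon_3)\in\{0,1\}^3$ into eight ``blocks'', each of which is an $(\ell-1)$-dimensional sub-hypercube (for sublinks of $L-W$) accompanied by a 2-dimensional $\mathbf{s}$-lattice inside the equivalence class $[(\tfrac12,\tfrac12,1,0,\ldots,0)]$, parametrized by $\mathbf{s}=(\tfrac12+n,\tfrac12+n,1+m,0,\ldots,0)$. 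The notation $\varepsilon_1\varepsilon_2\varepsilon_3*_{(s_1,s_2,s_3)}$ records one such block at a particular $(s_1,s_2,s_3)$, and the maps $\Gamma^{\pm K_i}$ are defined precisely so that they package together the maps $\Phi^{\pm K_i\cup\vec{N}}$ ranging over all oriented sublinks $\vec{N}\subset L-W$, i.e.\ they are the entire contribution of $\destab^\infty$ taking one block to another.

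First I would collapse the ``$L-W$ direction''. Inside each block, Lemma~\ref{pageiso} says that moving along any edge indexed by a component $K_j\subset L-W$ whose $\Phi^{\pm K_j}$ is nontrivial is a filtered quasi-isomorphism onto the parallel face. Using this repeatedly (as in the proof of Lemma~\ref{removecomponent}) lets me replace each block by a single complex while simultaneously turning the full collection of inter-block maps into precisely the $\Gamma^{\pm K_i}$. After this reduction, $\C$ is quasi-isomorphic to the cube of the eight blocks $\varepsilon_1\varepsilon_2\varepsilon_3*_{(\mathbf{s})}$ summed over $\mathbf{s}$ in the equivalence class, with edge maps $\Gamma^{\pm K_i}$ possibly shifting $\mathbf{s}$ by $\Lambda_i$ according to the orientation of the edge.

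Next I would cut the infinite 2-dimensional $\mathbf{s}$-lattice down to the five (actually four, up to symmetry) surviving complexes. The key point, exactly as in Section 8.1 of \cite{hflz}, is that for $|n|$ or $|m|$ large enough the projections $\proj^{\pm K_i}_{\mathbf{s}}$ become literal identities or pure powers of $U$, so along the edges connecting $\mathbf{s}$-values outside a finite window the associated $\Gamma^{\pm K_i}$ is a quasi-isomorphism. Filtering by $|n|+|m|$ (or a lexicographic refinement) and repeatedly cancelling acyclic mapping cones of these quasi-isomorphisms reduces the infinite lattice to finitely many representatives. A further round of cancellation eliminates the blocks $111*$, $110*$, and the $\varepsilon_3=1$ copies of $100*,010*$: each of these is connected by a $\Gamma^{+K_3}$ or $\Gamma^{\pm K_i}$ that is an isomorphism on homology (by Proposition~\ref{vertexiso}) to a neighbouring block at the same $\mathbf{s}$, hence they cancel in pairs, together with the redundant $\mathbf{s}$-copies of $001*,011*,101*$ other than the ones indicated.

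The main obstacle is Step~3, the bookkeeping of which $(\mathbf{s},\varepsilon_1\varepsilon_2\varepsilon_3)$ pairs cancel against which and in what order, keeping track of the shifts by $\Lambda_1=(0,0,1,0,\ldots,0)$ and $\Lambda_3=(1,1,0,\ldots,0)$ that $\destab^\infty$ introduces on each reversed-orientation edge. Once this is set up carefully the surviving four complexes and the four $\Gamma^{\pm K_j}$ arrows with the stated $\mathbf{s}$-labels are read off directly: $\Gamma^{+K_1},\Gamma^{+K_2}$ from $001*_{(\tfrac12,\tfrac12,1)}$ (no $\mathbf{s}$-shift), and $\Gamma^{-K_1},\Gamma^{-K_2}$ from $001*_{(\tfrac12,\tfrac12,0)}$ (which land at $\mathbf{s}+\Lambda_1=(\tfrac12,\tfrac12,1)$ in the $101*$ and $011*$ blocks respectively). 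This is exactly the diagram of the proposition.
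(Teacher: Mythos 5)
Your overall strategy---model on the Hopf-link computation, then repeatedly cancel acyclic mapping cones of quasi-isomorphic $\Gamma$-edges---is the right one, and you correctly identify that the shifts by $\Lambda_1$ and $\Lambda_3$ control which blocks cancel against which. However, there are two concrete gaps.

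First, your Step~1 (collapsing the $L-W$ direction so that each block $\varepsilon_1\varepsilon_2\varepsilon_3*$ becomes ``a single complex'') is not needed and would actually overshoot: the target in Proposition~\ref{finalshape} still carries the full $*$-part at each vertex, and the paper never collapses it. The $L-W$ directions are left untouched throughout; the filtrations used to peel off acyclic pieces are filtrations in the $(s_1,s_3,\varepsilon_1,\varepsilon_2,\varepsilon_3)$-variables only. Lemma~\ref{pageiso} is used only to justify that $\Phi^{\pm K_j}$ induces a quasi-isomorphism between the associated-graded pieces, not to shrink the blocks.

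Second, and more seriously, you defer the crux of the argument to ``bookkeeping'' without addressing the obstruction that makes it nontrivial: $\Lambda_1=\Lambda_2=(0,0,1,0,\ldots,0)$. After collapsing in the $\Lambda_3$-direction (leaving $\{s_1=\tfrac12,\varepsilon_3=0\}$) and peeling off $\{s_3>1\}$ via a filtration in which $\Phi^{+K_1}$ is a filtered quasi-isomorphism, one \emph{cannot} symmetrically repeat the argument with $\Phi^{-K_1}$ or $\Phi^{-K_2}$ to remove $\{s_3<1\}$, precisely because both $\Phi^{-K_1}$ and $\Phi^{-K_2}$ shift $s_3$ by the same amount. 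The paper resolves this by introducing the deliberately \emph{asymmetric} filtration $\F_2(x)=s_3-2\varepsilon_1-\sum_{i\neq1}\varepsilon_i$, which forces $\Phi^{-K_1}$ to drop filtration while $\Phi^{-K_2}$ does not; the associated graded of the carefully chosen subcomplex $\{s_3=1,\varepsilon_1=\varepsilon_2=1\}\oplus\{s_3=0,\varepsilon_1+\varepsilon_2\geq1\}\oplus\{s_3\leq-1\}$ then splits into $\Phi^{-K_2}$-mapping cones that are acyclic. Your proposal neither identifies this degeneracy nor supplies a mechanism to break the symmetry between $K_1$ and $K_2$, so the cancellation you describe does not go through as stated. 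Finally, once the correct four blocks survive, they carry $\varepsilon_3=0$; the passage to $\varepsilon_3=1$ in the proposition's statement is a last application of $\mathcal{S}^{+K_3}$ (a filtered quasi-isomorphism by Lemma~\ref{pageiso}), a step your write-up omits.
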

\begin{proof}
From now on, a complex $\{s_i > r\}$ will refer to all ${\varepsilon_1 \varepsilon_2 \varepsilon_3 *}_{(s_1,s_2,s_3)}$, where $s_i > r$; this is regardless of whether the component $K_i$ has been destabilized or what the value of $s_i$ is under some subsequent $\psi^M$ maps.  For this reason, we omit the $\mathbf{s}$ from the $\Phi$ maps.

Induce the filtration on $\C$ defined by $\F_3(x) = -(s_1 + \sum_{i \neq 3} \varepsilon_i)$ for $x \in {\varepsilon_1 \varepsilon_2 \varepsilon_3 *}_{(s_1,s_2,s_3)}$.  The components of the differential that preserve filtration level are given by $\partial$ and $\Phi^{+K_3}$.  Consider the subcomplex $\{s_1 > \frac{1}{2} \}$.    The associated graded with respect to the filtration on the subcomplex splits as a product of complexes of the form
\[
\xymatrix{
({\varepsilon_1,\varepsilon_2,0,\varepsilon_4,\ldots,\varepsilon_{\ell+2}}_{\mathbf{s}},\partial) \ar[r]^{\Phi^{+K_3}} & ({\varepsilon_1,\varepsilon_2,1,\varepsilon_4,\ldots,\varepsilon_{\ell+2}}_{\mathbf{s}},\partial) \\
}
\]

Since the maps $\Phi^{+K_3}$ are quasi-isomorphisms, we have that the associated graded, and thus all of $\{s_1 > \frac{1}{2} \}$, is acyclic.  Therefore, $\C$ is quasi-isomorphic to the quotient complex $\C/\{s_1 > \frac{1}{2} \}$, which is $\{s_1 \leq \frac{1}{2} \}$.  We then induce a similar filtration, $\mathcal{G}_3(x) = s_1 - \sum_i\varepsilon_i$.  The differentials preserving the filtration level will now be $\partial$ and $\Phi^{-K_3}$.  We consider the subcomplex, $\C '$, of $\{s_1 \leq \frac{1}{2} \}$ defined by $\{s_1 < \frac{1}{2}, \varepsilon_3 = 0 \} \oplus \{s_1 \leq \frac{1}{2}, \varepsilon_3 = 1\}$ (this is everything except $\{s_1 = \frac{1}{2}, \varepsilon_3 = 0 \}$).  This subcomplex now splits as a product of complexes of the form
\[
\xymatrix{
({\varepsilon_1,\varepsilon_2,0,\varepsilon_4,\ldots,\varepsilon_{l+2}}_{\mathbf{s}},\partial) \ar[r]^{\Phi^{-K_3}} & ({\varepsilon_1,\varepsilon_2,1,\varepsilon_4,\ldots,\varepsilon_{l+2}}_{\mathbf{s}+\Lambda_3},\partial) \\
}
\]
Similarly, since the maps $\Phi^{-K_3}$ are quasi-isomorphisms, $\C '$ is acyclic.  We are content to remove this and study only the remaining terms, namely $\{s_1 = \frac{1}{2}, \varepsilon_3 = 0\}$.  We have essentially collapsed this complex in the $\Lambda_3$-direction.
It is best to visualize the remaining complex via Figure~\ref{complexfigure}.

\begin{figure}

\[
\begin{matrix}
&&& \vdots \\ \\
&& 000* & 100* & \mbox{110$*$} & (\frac{1}{2},\frac{1}{2},1) \\
&&      & 010* & \\ \\
&& 000* & \mbox{100$*$} & \mbox{110$*$}  & (\frac{1}{2},\frac{1}{2},0) \\
&&      & \mbox{010$*$} & \\ \\
&& \mbox{000$*$} & \mbox{100$*$} & \mbox{110$*$} & (\frac{1}{2},\frac{1}{2},-1) \\
&&      & \mbox{010$*$} & \\ \\
\bigg{\uparrow}^{\Lambda_1 = \Lambda_2}&& & \vdots \\
\end{matrix}
\]
\caption{The complex $\{s_1 = \frac{1}{2}, \varepsilon_3 = 0\}$}\label{complexfigure}
\end{figure}

We can further reduce this complex in a similar way, by collapsing in the $\Lambda_1$-direction.  Consider the filtration, $\F_1(x) = -(s_3 + \sum_{i \neq 1} \varepsilon_i)$, on the subcomplex $\{s_3 > 1 \}$  of $\{s_1 = \frac{1}{2}, \varepsilon_3 = 0\}$.  The associated graded splits as a product of
\[
\xymatrix{
({0,\varepsilon_2,0,\varepsilon_4,\ldots,\varepsilon_{l+2}}_{\mathbf{s}},\partial) \ar[r]^{\Phi^{+K_1}} & ({1,\varepsilon_2,0,\varepsilon_4,\ldots,\varepsilon_{l+2}}_{\mathbf{s}},\partial) \\
}
\]

This complex is acyclic, as the $\Phi^{+K_1}$ are quasi-isomorphisms.  After removing this subcomplex, we are left with $\{s_3 \leq 1 \}$.  Since $\Lambda_1 = \Lambda_2$, we cannot repeat the argument for $\Phi^{-K_3}$ to remove $\{s_3 <1\}$.  We must tread carefully to chop the remaining complex down further.  Consider the filtration, $\F_2(x) = s_3 - 2\varepsilon_1 - \sum_{i \neq 1} \varepsilon_i$.  This odd-looking filtration is defined such that $\Phi^{-K_1}$ lowers the filtration level, but $\Phi^{-K_2}$ does not, even though $\Lambda_1 = \Lambda_2$.  We now study the subcomplex $\{s_3 = 1, \varepsilon_1 = \varepsilon_2 = 1\} \oplus \{s_3 = 0, \varepsilon_1+\varepsilon_2 \geq 1\} \oplus \{s_3 \leq -1 \}$.  This subcomplex is best seen by the boxed elements in Figure~\ref{boxedcomplex}.

\begin{figure}
$$ \;\overline{\qquad\qquad\qquad\qquad\quad} $$

\[
\begin{matrix}
&& 000* & 100* & \fbox{110$*$} & (\frac{1}{2},\frac{1}{2},1) \\
&&      & 010* & \\ \\
&& 000* & \fbox{100$*$} & \fbox{110$*$}  & (\frac{1}{2},\frac{1}{2},0) \\
&&      & \fbox{010$*$} & \\ \\
&& \fbox{000$*$} & \fbox{100$*$} & \fbox{110$*$} & (\frac{1}{2},\frac{1}{2},-1) \\
&&      & \fbox{010$*$} & \\ \\
&\bigg{\uparrow}^{\Lambda_1 = \Lambda_2}& & \vdots \\ \\
\end{matrix}
\]
\caption{The boxed terms form the final acyclic complex} \label{boxedcomplex}
\end{figure}

The associated graded splits into a product of complexes analogous to the ones defined previously.  Since $\Phi^{-K_2}$ is a quasi-isomorphism, we may again remove this acyclic complex in our study.  We can now see that the remaining complex is the same as the one in the statement of the proposition, except for the fact that all $\varepsilon_3$ are 0 instead of 1.  However, we can simply apply the map $\mathcal{S}^{+K_3} = \sum_{M \subset L'} \Phi^{+K_3 \cup +M}$ to all of the components to obtain a filtered quasi-isomorphism between the complex with $\varepsilon_3 = 0$ and the one with $\varepsilon_3 = 1$ by Lemma~\ref{pageiso}.  This completes the proof.
\end{proof}

\section{Reshaping the Complex} \label{gluing}

We have actually reduced the computation to calculating the homology of the complex given by

\[
\xymatrix{
00*_{(0,0)} \ar[rr]^{\Gamma^{+K_1}} \ar[rrd]^{\Gamma^{+K_2}} && 10*_{(0,0)} && \ar[ll]_{\Gamma^{-K_1}} \ar[lld]_{\Gamma^{-K_2}} 00*_{(0,0)}\\
&& 01*_{(0,0)} \\					
}
\]

Here we have suppressed the $\varepsilon_3$-coordinate by destabilizing this component via $\mathcal{S}^{+K_3}$ and applying Lemma~\ref{removecomponent} (it is easy to see this applies to the truncated complex as well).  Because of this, we have applied $\psi^{+K_3}$ to all $\mathbf{s}$.  Since all values of $\mathbf{s}$ and $\psi^{M}(\mathbf{s})$ are now all $\mathbf{0}$, we will suppress this for the remainder of the proof.

To simplify this problem further, we need some simple linear algebra.  Given a mapping complex over a vector space, $M(\phi:(V,\partial_V) \rightarrow (W,\partial_W))$, to determine the rank of $H(M(\phi))$, we only need to know the homologies of $V$ and $W$ and $\operatorname{rk} \phi_*$.  In the case that $H(V) \cong H(W)$ we have the convenient formula
\[
\operatorname{dim} H(M(\phi)) = 2 \operatorname{dim} H(V) - 2 \operatorname{rk} \phi_*,
\]
where this is a statement about the $\fpowers$-dimensions of the entire homology vector spaces, not at each grading.
This follows easily from the long exact sequence associated to $H(V)$,$H(W)$, and $H(M(\phi))$.  We now apply this to prove a convenient lemma.
\begin{lemma} \label{gluinglemma} Suppose $V$ is a finite-dimensional vector space over a field of characteristic 2.  Consider the complex given by $V$ equipped with the differential $\partial \equiv 0$.  Let $F,G,J,K:V \rightarrow V$, and define $\Theta:V \oplus V \rightarrow V \oplus V$ by
\begin{equation*}
\Theta(v,w) = (F(v) + G(w), J(w) + K(v)).
\end{equation*}
Furthermore, suppose that $J$ is a quasi-isomorphism (or equivalently, an invertible map).  Then, the homology of the mapping cone, $M(\Theta)$, has the same dimension as the homology of $M(F-GJ^{-1}K)$.
\end{lemma}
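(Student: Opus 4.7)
The plan is to apply the dimension formula stated just above the lemma. Since both $V \oplus V$ and $V$ carry the zero differential, their homologies equal themselves, and the hypothesis $H(V) \cong H(W)$ of the formula holds trivially for the mapping cone of $\Theta$ (source and target both equal $V \oplus V$) and for the mapping cone of $F - GJ^{-1}K$ (source and target both equal $V$). Thus the two dimensions in question are $4\dim V - 2\operatorname{rk}\Theta$ and $2\dim V - 2\operatorname{rk}(F - GJ^{-1}K)$, respectively. Matching these reduces the lemma to the rank identity $\operatorname{rk}\Theta = \dim V + \operatorname{rk}(F - GJ^{-1}K)$.

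To establish this rank identity, I would write $\Theta$ as the block matrix $\bigl(\begin{smallmatrix} F & G \\ K & J \end{smallmatrix}\bigr)$ acting on $V \oplus V$. Invertibility of $J$ allows one to block-diagonalize: subtracting $GJ^{-1}$ times the second block row from the first kills the upper-right $G$ and replaces $F$ by $F - GJ^{-1}K$, and then subtracting $J^{-1}K$ times the second block column from the first kills the lower-left $K$ without disturbing anything else. Both operations correspond to pre- and post-multiplication by unipotent block-triangular matrices, which are invertible and hence rank-preserving. The outcome is the block diagonal matrix $\operatorname{diag}(F - GJ^{-1}K,\ J)$, giving $\operatorname{rk}\Theta = \operatorname{rk}(F - GJ^{-1}K) + \operatorname{rk}(J) = \operatorname{rk}(F - GJ^{-1}K) + \dim V$, as required.

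There is no real obstacle here: the argument is pure linear algebra and uses only invertibility of $J$ together with finite-dimensionality of $V$, with no Floer-theoretic input and no essential use of characteristic $2$. The only bookkeeping step is checking that the triangular block operations are implemented by genuinely invertible matrices, which is immediate since each has unipotent triangular form (its inverse negates the off-diagonal block). Plugging the rank identity back into the dimension formula yields $\dim H(M(\Theta)) = \dim H(M(F - GJ^{-1}K))$, completing the proof.
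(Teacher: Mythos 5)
Your proof is correct and follows essentially the same route as the paper: invoking the dimension formula for mapping cones and then computing $\operatorname{rk}\Theta$ by block Gaussian elimination using the invertibility of $J$. The only cosmetic difference is that you explicitly perform both the row and column operation to reach $\operatorname{diag}(F - GJ^{-1}K, J)$, whereas the paper stops at the block lower-triangular matrix $\bigl(\begin{smallmatrix} F - GJ^{-1}K & 0 \\ K & J \end{smallmatrix}\bigr)$ and leaves the final rank count implicit.
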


\begin{proof}
We know that the homology of $M(\Theta)$ has rank given by $2 \operatorname{dim} (V \oplus V) - 2 \operatorname{rk} \Theta$.  We study the matrix
$\Theta = \begin{pmatrix}
F & G \\
K & J
\end{pmatrix}$.
It is easy to see that this matrix has the same rank as
$X = \begin{pmatrix}
F-GJ^{-1}K & 0 \\
K & J
\end{pmatrix}$.
Now, we have
\begin{align*}
\operatorname{dim} H(M(\Theta)) =& 2 \operatorname{dim} (V \oplus V) - 2 \operatorname{rk} X \\ =& 4 \operatorname{dim} V - 2(\operatorname{rk}(F-GJ^{-1}K) + \operatorname{dim} V) \\
=& 2 \operatorname{dim} V - 2\operatorname{rk}(F-GJ^{-1}K) \\ =& \operatorname{dim} H(M(F-GJ^{-1}K)). \qedhere
\end{align*}
\end{proof}

In order to apply Lemma~\ref{gluinglemma} to Proposition~\ref{finalshape}, we must see that one of the maps $\Gamma$ is a quasi-isomorphism.  In fact, it is easy to see each $\Gamma$ map is a quasi-isomorphism by applying the same filtration arguments as in the proof of Proposition~\ref{finalshape}.  Thus, it remains to calculate the rank of the induced map $\Gamma^{+K_1}_* + \Gamma^{-K_1}_* \circ (\Gamma^{-K_2}_*)^{-1} \circ \Gamma^{+K_2}_*$ from $H_*(00*)$ to $H_*(10*)$.  For notational convenience, we will abbreviate this induced map by $\Psi^{K_1,K_2}$.

\section{The Maps $\Gamma^{\pm K_i}$}
In order to study the $\Gamma$ maps, it is useful to note that the homology of each complex, $00*$, $10*$, or $01*$, is naturally isomorphic to $\hfboldinfty(S^3_{\mathbf{0}}(L'))$ by Lemma~\ref{removecomponent}, where $L'$ is $L - W$.  Recall that $L'$ consisted of one less component than the level of $b_1$ that we wanted for Theorem~\ref{maintheorem}.  Therefore, we may assume that this homology is exactly $\hcinfty(S^3_{\mathbf{0}}(L'))$.   However, we want to study this a little more carefully.

Let $x^1$ represent the Hom-dual of the class $[K_3]$ in $H^1(S^3_{\mathbf{0}}(L' \cup K_i))$.  Choose the other basis vectors of $H^1$ to be given by the meridians of the components of $L'$.  Using the $\varepsilon$-filtration restricted to the $*$-part of the complex, we may think of the $E_3$ term for $00*$ as $x^1 \wedge \fexterior(S^3_{\mathbf{0}}(L')) \otimes \mathbb{F}[[U,U^{-1}]$ and the $E_3$ term of $10*/01*$ as simply $\fexterior(S^3_{\mathbf{0}}(L')) \otimes \mathbb{F}[[U,U^{-1}]$.  Therefore, we will think of taking homology of $00*$ as taking $x^1$ and wedging with the elements of $\hcinfty(S^3_{\mathbf{0}}(L'))$.

The homology of each complex $00*/10*/01*$ is actually the homology of $E_3$ with respect to $d_3-d^{K_1}_3$ thought of as a differential on $\cinfty(S^3_{\mathbf{0}}(L'))$.  We mean by $d^{K_1}_3$ the components of $d_3$ that correspond to contracting by triple cup products that contain $x^1$.  Therefore, we can consider the maps $d^{K_i}_3: (x^1 \wedge \fexterior(S^3_{\mathbf{0}}(L'))) \otimes U^j \rightarrow \fexterior(S^3_{\mathbf{0}}(L')) \otimes U^{j-1}$. However, $d^{K_i}_3$ is in fact a chain map with respect to the differential $d_3 - d^{K_i}_3$ on these two complexes since $\iota_\alpha \circ \iota_\beta = \iota_{\alpha \wedge \beta} = \iota_\beta \circ \iota_\alpha$ (mod 2) (the composition of interior multiplying by two different trilinear forms corresponds to interior multiplying by the wedge product).  Therefore, it makes sense to talk about $(d^{K_i}_3)_*$.

Finally, by choosing a basis with the meridian of $K$ to be $x^1$ and the other basis vectors as meridians of the components of $L'$, we can identify $\fexterior(S^3_{\mathbf{0}}(L))$ with each $\fexterior(S^3_{\mathbf{0}}(L' \cup K_i))$.  Thus, we can make sense of the statement $d^K_3 = d^{K_1}_3 + d^{K_2}_3$.  Better yet, this statement is actually true by the construction of $K = K_1 \# K_2$ in Proposition~\ref{splitknots}.

What we hope to find is that the map $\Psi^{K_1,K_2}$ is precisely $(d^K_3)_*$ from $H_*(00*)$ to $H_*(10*)$, so as to guarantee that the higher differentials must vanish.  It turns out that this is not the case, but it will be true up to some terms of higher order.

We now give a brief outline for how the rest of the proof is going to go.  First, we will set up the complex so that $\Gamma^{+K_i}_*$ will essentially be given by the identity and $\Gamma^{-K_i}_*$ by the identity plus a term that corresponds to $(d^{K_i}_3)_*$.  Therefore, $\Psi^{K_1,K_2}$ will be $(d^{K_1}_3)_* + (d^{K_2}_3)_* + (d^{K_1}_3)_* \circ (d^{K_2}_3)_*$.  Finally, it will be a simple slight of hand to prove from here that the rank of $\hfboldinfty$ is at least that of the predicted homology, $HC^\infty$.

From now on we will use $00*$ (and similar complexes) to refer to its homology, as well as for the maps $\Gamma^{\pm K_i}$, unless specified otherwise.  A mindful reader may have noticed that while we have been using hyperboxes of Heegaard diagrams to make all of the constructions so far, there have been no restrictions on the choice of complete system.  Now is the time where we do so.

The complete system of hyperboxes that we will work with is a basic system of hyperboxes.  Instead of recalling the construction, we will review only the properties we will use and refer the reader to Section 6.7 in \cite{hflz}.  Basic systems have the property that if $\vec{M'}$ has the induced orientation of $\vec{M}$ for a sublink $M' \subset M$, then $\hyperbox^{\vec{M},\vec{M'}}$ consists of a single Heegaard diagram and $\hyperbox^{\vec{M}-\vec{M'}}$ is obtained from $\hyperbox^{\vec{M}}$ by removing the $z$ basepoints corresponding to components of $M'$.  Let $K$ be the $i$th component of $L$.  By compatibility, a hyperbox $\hyperbox^{+K \cup \vec{M},\vec{M'}}$ has $d_i = 0$ (it has 0 in the $i$th component of the size).

\begin{lemma} \label{higherhomotopiesvanish} Suppose we are working in a basic system.  If $\vec{M}$ has at least two components, one of which is compatibly oriented with $L$, then $\Phi^{\vec{M}}$ vanishes; in other words, for all $K$, $\Phi^{+K} = \mathcal{S}^{+K} = \Gamma^{+K}$.
\end{lemma}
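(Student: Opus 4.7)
My plan is to reduce the vanishing of $\Phi^{\vec M}$ to that of the destabilization $\destab^{\vec M}$, and then exploit the basic-system property that the hint in the excerpt points to: the hyperbox $\hyperbox^{L,\vec M}$ has size zero in the direction of the compatibly oriented component $+K \subset \vec M$.

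First I would write $\Phi^{\vec M} = \destab^{\vec M} \circ \proj^{\vec M}$, so that it suffices to show $\destab^{\vec M} = 0$. Recall that $\destab^{\vec M}$ is assembled from the $|\vec M|$-dimensional hyperbox $\hyperbox^{L',\vec M}$ by compressing it to a hypercube of chain complexes and reading off the top polygon map, suitably twisted to count basepoints via the $E^i_{s_i}(\phi)$. By the compatibility axiom for complete systems, together with the defining property of a basic system quoted in the excerpt, the sub-hyperbox $\hyperbox^{L,\vec M}(\emptyset, +K) \cong r_{\vec M - +K}(\hyperbox^{L,+K})$ is a single Heegaard diagram (obtained by simply deleting the $z$ basepoint for $K$). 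Consequently the size of $\hyperbox^{L,\vec M}$ in the $K$-direction is zero, so every $D^\varepsilon$ at a vertex of this hyperbox with $\varepsilon_K \geq 1$ is identically zero by the out-of-hyperbox convention. Since $|\vec M| \geq 2$, the top polygon map of the compressed hypercube for $\vec M$ must include at least one $D^\varepsilon$-step with $\varepsilon_K \geq 1$, and each such step contributes a zero factor; hence $\destab^{\vec M} = 0$, and thus $\Phi^{\vec M} = 0$.

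For the second claim, both $\mathcal{S}^{+K}$ and $\Gamma^{+K}$ are sums of $\Phi^{+K \cup \vec N}$ over sublinks $\vec N$ of $L - K$ or $L - W$, respectively, with the summand at $\vec N = \emptyset$ equal to $\Phi^{+K}$. For any nonempty $\vec N$, the sublink $+K \cup \vec N$ has at least two components and contains the compatibly oriented $+K$, so by the first part $\Phi^{+K \cup \vec N} = 0$. Only the $\vec N = \emptyset$ terms survive, giving $\Phi^{+K} = \mathcal{S}^{+K} = \Gamma^{+K}$.

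The main obstacle will be verifying cleanly that the compressed top polygon map vanishes when one coordinate of the underlying hyperbox has size zero. The compression procedure of Manolescu and Ozsv\'ath is iterated and intricate, so the cleanest route is probably an induction on $|\vec M|$ that uses the interpretation of $\destab^{\vec M}$ as the top-order obstruction to the individual destabilizations $\destab^{+K_j}$ for $K_j \in \vec M$ commuting up to higher chain homotopy. In a basic system $\destab^{+K}$ is essentially the identity coming from the basepoint-relabeling identification, so it commutes strictly with each other destabilization and with $\partial$, forcing the higher-order obstructions involving $K$ to vanish.
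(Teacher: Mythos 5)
Your argument takes essentially the same route as the paper: reduce $\Phi^{\vec M}$ to $\destab^{\vec M}$, invoke the basic-system fact that $d_i = 0$ in the direction of the compatibly oriented component, and conclude that the compressed polygon map vanishes because every contributing term would need a step in that direction. The precise verification of this last point, which you flag as the ``main obstacle,'' is exactly where the paper invokes the symphony formalism of Manolescu and Ozsv\'ath: for $k \geq 2$ the $k$th standard symphony always contains a harmony with the index $i$, while nonzero contributions to the compressed map require at most $d_i = 0$ such harmonies, so the sum is empty.
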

\begin{proof} Let $\vec{M'}$ be a nonempty sublink of $L - K$ and suppose that $K$ is the $i$th component, consistently oriented.  We will show that $\destab^{+K \cup \vec{M'}}$ vanishes.  Since $\Phi = \destab \circ \proj$, this will prove the lemma.

Let's study destabilization maps more carefully.  Destabilizing a link of $k$ components is given by compressing the hyperboxes, or in other words, playing the $k$th standard symphony for some hypercubical collection (see Section 3 of \cite{hflz}); if one of the edges in the hyperbox that we are summing over has length 0, the sum over algebra elements in the hypercubical collection when playing the song will be empty, if $k \geq 2$.  This is true because when $k \geq 2$, the $k$th standard symphony contains a harmony with the element $i$ at least once.  According to the definition of playing a song, and thus in compression, in order for there to be nonzero terms in the formula, the number of harmonies that contain $i$ must be at most $d_i$; however, we have established that this is 0.  Therefore, the destabilization for $+K \cup \vec{M'}$ must be 0.
\end{proof}

\begin{lemma} \label{identitybox}
For a basic system, the map $\Gamma^{+K_i}$ is contraction by the dual of $x^1$ in $H^1(Y)^*$, after the appropriate identifications with $HC^\infty$.
\end{lemma}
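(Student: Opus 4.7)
The plan is to reduce $\Gamma^{+K_i}$ to a single destabilization map $\Phi^{+K_i}$, and then to interpret $\Phi^{+K_i}_*$ via the $E_3$-identifications of Lemma~\ref{d3calc}. First, I would apply Lemma~\ref{higherhomotopiesvanish} directly: in the sum $\Gamma^{+K_i} = \sum_{\vec{N} \subset L-W} \Phi^{+K_i \cup \vec{N}}$, every nonempty $\vec{N}$ yields a term $\Phi^{+K_i \cup \vec{N}}$ whose oriented sublink has at least two components with $K_i$ consistently oriented, so the lemma forces that term to vanish. Hence $\Gamma^{+K_i} = \Phi^{+K_i}$, a single destabilization map.

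Next, I would analyze $\Phi^{+K_i}_*$ with respect to the $\varepsilon$-filtration on the $*$-part of the complex. Because $\Phi^{+K_i}$ acts vertex-by-vertex on the $*$-hypercube, it preserves this filtration, and by Proposition~\ref{vertexiso} it is a quasi-isomorphism at each vertex; it therefore induces an isomorphism on $E_3$-pages, from $x^1 \wedge \fexterior(S^3_{\mathbf{0}}(L')) \otimes \fpowers$ to $\fexterior(S^3_{\mathbf{0}}(L')) \otimes \fpowers$. To pin down which isomorphism, I would apply the construction of Lemma~\ref{d3calc} to the hypercube for the homologically split link $L' \cup K_i$ (after collapsing the $K_{3-i}$ direction via the filtered quasi-isomorphism $\mathcal{S}^{+K_{3-i}}$ of Lemma~\ref{pageiso}, which does not disturb $\Gamma^{+K_i}$ on homology). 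Under the resulting generator convention, the generator at a vertex of $00*$ whose $*$-part has zeros at positions $j_1, \dots, j_k$ is labeled $x^{K_i} \wedge x^{L'_{j_1}} \wedge \cdots \wedge x^{L'_{j_k}}$, while the generator at the neighboring vertex of $10*$ (or $01*$) is labeled $x^{L'_{j_1}} \wedge \cdots \wedge x^{L'_{j_k}}$; by construction of the labels via inverse destabilizations, $\Phi^{+K_i}_*$ sends the first to the second, which is precisely contraction by $(x^{K_i})^*$.

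Finally, I would identify $x^{K_i}$ with $x^1$. Because the unknot $K_3$ links $K_i$ once and nothing else, the loop $K_3$ satisfies $[K_3] = [\mu_{K_i}]$ in $H_1(S^3_{\mathbf{0}}(L' \cup K_i))$, so dually $x^1 = x^{K_i}$ in the $H^1$ of this intermediate manifold; the identifications for $\Gamma^{+K_1}$ and $\Gamma^{+K_2}$ use the intermediate manifolds $S^3_{\mathbf{0}}(L' \cup K_1)$ and $S^3_{\mathbf{0}}(L' \cup K_2)$ respectively, and in each case the label $x^1$ names the same class in $H^1(Y)$ after identification. The main obstacle is the bookkeeping: one must verify that the Lemma~\ref{d3calc} generator convention transports correctly through the destabilization of $K_3$, the $\psi^{+K_3}$-shift on Spin$^c$ structures, and the $\mathcal{S}^{+K_{3-i}}$ collapse, so that the same class $x^1$ really is what both $\Gamma^{+K_1}$ and $\Gamma^{+K_2}$ contract against---but this reduces to the compatibility properties of a basic system established in Section~\ref{surgeryreview}.
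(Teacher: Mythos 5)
Your proposal is correct and follows essentially the same route as the paper's proof: reduce $\Gamma^{+K_i}$ to $\Phi^{+K_i}$ via Lemma~\ref{higherhomotopiesvanish}, pass to the $0* \to 1*$ complex for $L' \cup K_i$ by applying $\mathcal{S}^{+K_{3-i}}$, and then read off the answer from the generator convention of Lemma~\ref{d3calc} (where generators of $\C^\varepsilon$ are built by inverting destabilizations, so $\Phi^{+K_i}_*$ just strips off the corresponding wedge factor). The one small thing you make explicit that the paper leaves tacit is the identification $x^1 = x^{K_i}$ via $[K_3]=[\mu_{K_i}]$; conversely, you should also state, as the paper does, that the $E_3 = E_\infty$ equality (by the inductive hypothesis) is what lets you promote the $E_3$-level computation to the map on actual homology.
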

\begin{proof}
We prove this with $i=1$.  We want to study $\Gamma^{+K_1}:00* \rightarrow 10*$.  Let's return to the chain level for now.  First of all, by Lemma~\ref{higherhomotopiesvanish} $\Gamma^{+K_1} = \mathcal{S}^{+K_1} = \Phi^{+K_1}$ in a basic system.    Now, we may identify the complex $00* \rightarrow 10*$ with the complex $01* \rightarrow 11*$ by applying $\Gamma^{+K_2} = \Phi^{+K_2}$, which we can think of as the complex $0* \rightarrow 1*$ for $L' \cup K_1$.

However, by construction, the generators of the $E_3$ pages in $0*$ (exterior elements with $x^1$ in them) were defined by applying $(\Phi^{+K_1}_*)^{-1}$ to the corresponding element in $1*$ in Lemma~\ref{d3calc} (with no $x^1$).  Therefore, under these identifications on the $E_3$ page, $\Gamma^{+K_1}$ is given by simply removing $x^1$.  Thus, passing to the homology of $0*$ and $1*$, the $E_4=E_\infty$ page by induction, the induced maps also behave the same way.
\end{proof}

By ignoring the ``$x^i \wedge$" components of $00*$, we will in fact think of $\Gamma^{+K_i}$ as the identity.  It turns out that knowing $\Gamma^{+K_i}$ is exactly what we need to understand $\Gamma^{-K_i}$ via our inductive arguments.

\begin{lemma}
With the basic system and the corresponding identifications as above, the map $\Gamma^{-K_i}$ is given by $Id + (d^{K_i}_3)_*$.
\end{lemma}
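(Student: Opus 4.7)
The plan is to split the sum defining $\Gamma^{-K_i}$ according to the size of the auxiliary sublink $\vec{N}$,
\[
\Gamma^{-K_i} \;=\; \Phi^{-K_i} \;+\; \sum_{\emptyset \neq \vec{N} \subset L - W} \Phi^{-K_i \cup \vec{N}},
\]
and to show that, after the identifications of Lemma~\ref{d3calc}, the first summand induces the identity on $E_\infty$, while the entire remaining sum induces $(d^{K_i}_3)_*$.

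For the first summand I would adapt the argument of Lemma~\ref{identitybox} essentially verbatim. In a basic system, $\Phi^{-K_i} = \destab^{-K_i} \circ \proj^{-K_i}$ is constructed just like $\Phi^{+K_i}$ but with the $w$-basepoints playing the role the $z$-basepoints played for the $+$ direction; in particular, on the $\partial$-homology of the relevant vertices it realizes the same canonical quasi-isomorphism between the $\fpowers$-generators. Since the $E_3$-generators of $00*$ were defined in Lemma~\ref{d3calc} to be $x^1$ wedged with the image of the corresponding $10*$-generators under $(\Phi^{+K_i}_*)^{-1}$, the induced map $\Phi^{-K_i}_*$ simply erases the $x^1$-factor, i.e., acts as the identity under the identifications.

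For the remaining sum $R$, the plan is to mimic the proof of the $d_3$-formula in Theorem~\ref{spectralcalc}. The summands of $R$ can be graded by $|\vec{N}|$: the summand indexed by $\vec{N}$ shifts the $\varepsilon$-filtration on $L'$ by $|\vec{N}|$, so it contributes to the induced map on the $E_{|\vec{N}|+1}$ page of the $L'$-direction spectral sequences computing $H_*(00*)$ and $H_*(10*)$. The $|\vec{N}| = 1$ contribution therefore lives at $E_2$ and vanishes by Proposition~\ref{firsttwovanish}. For $|\vec{N}| = 2$ with $\vec{N}$ supported on $\{K_{j_1}, K_{j_2}\} \subset L'$, the naturality and injectivity of face inclusions from Proposition~\ref{firsttwovanish}, together with Lemmas~\ref{pageiso} and~\ref{removecomponent}, reduce the question to the three-component face for $K_i \cup K_{j_1} \cup K_{j_2}$; there the explicit $b_1 = 3$ computation of \cite{hfcoho} pins down the corresponding component of $R_*$ as interior multiplication by $\langle x^1 \smile x^{j_1} \smile x^{j_2}, [Y] \rangle$. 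Summing over all such pairs yields $(d^{K_i}_3)_*$ exactly.

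The main obstacle is ruling out the contributions from $|\vec{N}| \geq 3$. For these I would invoke the inductive hypothesis underlying the proof of Theorem~\ref{connectsumimplies}: the link $L' \cup K_i$ (equivalently $(L-K) \cup K'$ or $(L-K) \cup K''$) has strictly smaller $c$-complexity than $L$, so by hypothesis (\ref{rankinequality}) holds for $L' \cup K_i$, forcing its link-surgery spectral sequence to collapse at $E_3$. Via the face identification of Lemma~\ref{pageiso}, the $|\vec{N}| \geq 3$ contributions to $R_*$ on $E_\infty$ correspond to higher differentials $d_k$ with $k \geq 4$ in that spectral sequence and therefore vanish. Combined with the identity contribution of $\Phi^{-K_i}$ and the $(d^{K_i}_3)_*$ contribution from $|\vec{N}| = 2$, this gives $\Gamma^{-K_i}_* = Id + (d^{K_i}_3)_*$ on $E_\infty$, as claimed.
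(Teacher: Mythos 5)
Your decomposition $\Gamma^{-K_i} = \Phi^{-K_i} + \sum_{\emptyset \neq \vec{N}} \Phi^{-K_i \cup \vec{N}}$ is a genuinely different route from the paper, which instead computes the \emph{sum} $(\Gamma^{+K_1} + \Gamma^{-K_1})_*$ by comparing two depth-one spectral sequences on the mapping cone $M(\Gamma^{+K_1} + \Gamma^{-K_1}\colon 0* \to 1*)$: one built from the chain-level complex, the other from $(E_3,d_3)$, which by induction computes the same homology. The filtration $\tilde\F(x) = -\varepsilon_1$ has depth 1, so its $d_1$ is the whole story; the $d_1$ of the chain-level model is $(\Gamma^{+K_1}+\Gamma^{-K_1})_*$, the $d_1$ of the $(E_3,d_3)$ model is $(d^{K_1}_3)_*$, and the identification of the two gives the answer in one stroke. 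By contrast, your argument tries to isolate the contribution of each $|\vec{N}|$-slice.

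The key gap is that the individual summands $\Phi^{-K_i \cup \vec{N}}$ are not chain maps; only the full sum $\Gamma^{-K_i}$ is. In a basic system the higher homotopies vanish for positively-oriented components (Lemma~\ref{higherhomotopiesvanish}), but they are precisely what glue the $\Phi^{-K_i \cup \vec{N}}$ together into the chain map $\Gamma^{-K_i}$. So ``the map $\Phi^{-K_i}$ induces on $E_\infty$'' is not well-defined, and neither is the ``contribution of $R_*$'' of a single $|\vec{N}|$-slice. The induced map $\Gamma^{-K_i}_*$ on homology respects the $L'$-filtration, and its associated-graded piece $E_\infty(\Gamma^{-K_i})$ is indeed the identity (this follows because $\Gamma^{+K_i}$ and $\Gamma^{-K_i}$ agree on the $E_1$ page over $\mathbb{F}$, so agree on every $E_r$); but the off-diagonal pieces of $\Gamma^{-K_i}_*$ are not given slice-by-slice by the $\Phi^{-K_i \cup \vec{N}}$, because choosing a cycle representative mixes the slices.

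Your arguments for the $|\vec{N}|=1$ and $|\vec{N}| \geq 3$ cases also conflate the internal spectral sequence differentials $d_r$ of the hypercube with the filtration-shift pieces of the cross-map. Proposition~\ref{firsttwovanish} says $d_1 = d_2 = 0$ as differentials of the spectral sequence of $\C^\infty$; it does not by itself say that the shift-1 slice of $\Gamma^{-K_i}$ contributes zero to $\Gamma^{-K_i}_*$, because $d_r$ on $E_r$ is a subquotient-level object that has already absorbed corrections from lower-shift terms. The same issue appears in your appeal to $E_3$-collapse for the $|\vec{N}| \geq 3$ slices: $d_k = 0$ for $k \geq 4$ on $E_k$ does not cleanly isolate the shift-$(k-1)$ piece of the cross-map. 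The paper's mapping-cone trick sidesteps all of this by never trying to break $\Gamma^{-K_i}$ into non-chain-map pieces; it extracts the answer from the $d_1$ of a different, much coarser filtration.
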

\begin{proof}
Again, we assume $i=1$.  By applying $\mathcal{S}^{+K_2} = \Phi^{+K_2}$ as above, we can prove the result on $0* \rightarrow 1*$ instead.  Now, by induction, $\hfboldinfty(S^3_{\mathbf{0}}(L' \cup K_1)) \cong \hcinfty(S^3_{\mathbf{0}}(L' \cup K_1))$ and therefore the higher differentials after $d_3$ vanish.  We consider the entire $E_3$ page for surgery on $L' \cup K_1$.  However, we induce a new filtration on this, $\tilde{\F}(x) = -\varepsilon_1$.  Since this filtration has depth 1, to understand the spectral sequence that this filtration gives, it suffices to calculate the homology of the associated graded and then $d_1$.

Let us do a quick algebra review first.  Consider a chain map $f:C_1 \rightarrow C_2$.  Now, we filter $M(f)$ by
\begin{equation*}
O(x) = \left\{
\begin{array}{rl}
1 & \text{if } x \in C_1,\\
0 & \text{if } x \in C_2.
\end{array} \right.
\end{equation*}
We can explicitly describe the pages in the spectral sequence arising from $O$.  The $E_0$ term splits as $C_1 \oplus C_2$.  The $E_1$ term will be given by $H_*(C_1) \oplus H_*(C_2)$.  Now, $d_1$ will be given by $f_*$.  Finally, all higher differentials will vanish.

On the chain level, $\hfboldinfty(S^3_{\mathbf{0}}(L' \cup K_1)$ is quasi-isomorphic to the mapping cone of $\Gamma^{+K_1} + \Gamma^{-K_1}:0* \rightarrow 1*$.  Now, split $d_3$ as $(d_3 - d^{K_1}_3) + d^{K_1}_3$.  With respect to the spectral sequence coming from $\tilde{\F}$, we see the only component of the differential that preserves the filtration level is $d_3 - d^{K_1}_3$.  Therefore, the associated graded splits as $H_*(0*)$ and $H_*(1*)$.  The differential, $d^{\tilde{\F}}_1$, is thus given by $\Gamma^{+K_1}_* + \Gamma^{-K_1}_*$.  However, we also know that $(d^{K_1}_3)_*$ must be $d^{\tilde{\F}}_1$.  Applying Lemma~\ref{identitybox} gives the desired result.
\end{proof}

\section{The Final Calculation}
Recall that we are interested in the calculation of
\begin{equation}
\Psi^{K_1,K_2} = \Gamma^{+K_1} + \Gamma^{-K_1} \circ (\Gamma^{-K_2})^{-1} \circ \Gamma^{+K_2} : 00* \rightarrow 10*,
\end{equation}
after we have taken homology.  By our identifications of the previous section, this corresponds to studying
\begin{equation} \label{gammas}
Id + (Id+d^{K_1}_3)_* \circ (Id+d^{K_2}_3)^{-1}_* \circ Id,
\end{equation}
where $Id$ secretly means contracting out the $x^1$ component of the exterior algebra elements.

\begin{lemma} \label{finalgammaslemma}
We have the following equality of $\fpowers$-module maps:
\begin{equation*} \label{finalgammas}
(\ref{gammas}) = (d^{K_1}_3)_* + (d^{K_2}_3)_* + (d^{K_1}_3)_* \circ (d^{K_2}_3)_*  =  (d^K_3)_* + (d^{K_1}_3)_* \circ (d^{K_2}_3)_*
\end{equation*}
\end{lemma}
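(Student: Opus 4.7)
The plan is to manipulate the expression $\Psi^{K_1,K_2} = Id + (Id + A) \circ (Id + B)^{-1} \circ Id$ directly, where $A = (d^{K_1}_3)_*$ and $B = (d^{K_2}_3)_*$ are the endomorphisms of the common vector space $V$ obtained from the identifications of Section~\ref{gluing}. The central task is to compute $(Id + B)^{-1}$ in closed form; for this, the key nilpotence $B^2 = 0$ will be needed, and once this is in hand the remaining step is a short manipulation in characteristic $2$.

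To establish $B^2 = 0$, I will first unpack what $(d^{K_2}_3)_*$ looks like as an endomorphism of $V$. On the $E_3$-page, $d^{K_2}_3$ is contraction by the sub-$3$-form $\mu_{K_2}$ of $\mu_Y$ whose summands all contain the class $x^1$, so we can write $\mu_{K_2} = x^1 \wedge \eta_2$ for some $2$-form $\eta_2$. The identification of $H_*(00*)$ with $V$ strips off the $x^1$-factor, so by the identity $\iota_\alpha \circ \iota_\beta = \iota_{\alpha \wedge \beta}$ from Section~\ref{gluing}, the induced endomorphism of $V$ is simply $\iota_{\eta_2}$. Since $\eta_2$ is a $2$-form over a field of characteristic $2$, a direct expansion of $\eta_2 \wedge \eta_2$ gives zero (each basis wedge squares to zero, and cross terms come in pairs), so $B^2 = \iota_{\eta_2 \wedge \eta_2} = 0$. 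The same reasoning identifies $A$ with $\iota_{\eta_1}$, where $\mu_{K_1} = x^1 \wedge \eta_1$, so that $AB = \iota_{\eta_1 \wedge \eta_2}$ --- a $4$-form contraction which in general need not vanish.

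With $B^2 = 0$, we have $(Id + B)^{-1} = Id + B$ over $\mathbb{F}$. The expression then collapses to
\begin{equation*}
\Psi^{K_1,K_2} = Id + (Id + A)(Id + B) = Id + Id + A + B + AB = A + B + AB,
\end{equation*}
which is exactly $(d^{K_1}_3)_* + (d^{K_2}_3)_* + (d^{K_1}_3)_* \circ (d^{K_2}_3)_*$. The second form of the answer, $(d^K_3)_* + (d^{K_1}_3)_* \circ (d^{K_2}_3)_*$, follows from $d^K_3 = d^{K_1}_3 + d^{K_2}_3$, which is immediate from the Milnor invariant additivity in Theorem~\ref{connectsumadds} together with the construction of $K = K_1 \# K_2$ in Proposition~\ref{splitknots}.

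The main subtle step is the identification of $(d^{K_2}_3)_*$ with the endomorphism $\iota_{\eta_2}$ of $V$ and the verification that $B^2 = 0$. One must confirm that the identifications from Section~\ref{gluing}, which on the $E_3$-page amount to ``removing the $x^1$-factor'', interact with $d^{K_2}_3$ in exactly the manner needed for the squaring-to-zero argument to succeed; once this is pinned down, everything else is formal algebraic manipulation in characteristic~$2$.
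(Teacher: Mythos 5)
Your argument is correct, and the overall structure matches the paper's: establish that $(d^{K_i}_3)_*$ is nilpotent of order two, invert $Id + (d^{K_i}_3)_*$ as $Id + (d^{K_i}_3)_*$, and expand over a field of characteristic $2$, finishing with the additivity $d^K_3 = d^{K_1}_3 + d^{K_2}_3$ coming from Theorem~\ref{connectsumadds}. Where you differ is in how the nilpotence $((d^{K_i}_3)_*)^2 = 0$ is established. The paper writes $d^{K_i}_3 = d_3 - (d_3 - d^{K_i}_3)$ and observes that both $d_3$ and $d_3 - d^{K_i}_3$ are differentials (the latter being the $\cinfty$-differential for $L'$), hence square to zero; this is terse and leans implicitly on the commutation $\iota_\alpha \iota_\beta = \iota_\beta \iota_\alpha$ established a few paragraphs earlier in order to kill the cross-term. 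You instead make the endomorphism of $V$ explicit: after stripping $x^1$, $(d^{K_i}_3)_*$ is contraction by a $2$-form $\eta_i$ (where $\mu_{K_i} = (x^1)^* \wedge \eta_i$ --- note you mean the dual basis vector here, not $x^1$ itself), and $\eta_i \wedge \eta_i = 0$ directly over $\mathbb{F}_2$. Your version is more self-contained and makes transparent what the operator actually does on $V$, at the cost of going through the $x^1$-stripping identification once more; the paper's version avoids that but relies on the reader having retained the commutation fact. Both are fine. One small remark: you do not even need to reduce to a $2$-form --- any \emph{pure} $k$-form $\omega$ satisfies $\omega \wedge \omega = 0$ over $\mathbb{F}_2$ since the cross-terms pair up and the diagonal terms vanish, so $\iota_{\mu_{K_i}}^2 = \iota_{\mu_{K_i} \wedge \mu_{K_i}} = 0$ already on the $E_3$ page before any identification.
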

\begin{proof}
First, we note that $d^{K_i}_3 = d_3 - (d_3 - d^{K_i}_3)$.  Clearly $(d_3)^2=0$.  However, $d_3 - d^{K_i}_3$ is the differential on $\cinfty(S^3_{\mathbf{0}}(L'))$, so this squares to 0 as well.  Therefore, $(d^{K_i}_3)^2 = 0$.  It is now easy to see that
\begin{equation*}
(Id + (d^{K_i}_3)_*)^2 = Id + ((d^{K_i}_3)^2)_* = Id.
\end{equation*}

Thus, $(Id + (d^{K_i}_3)_*)^{-1} = Id + (d^{K_i}_3)_*$.  Furthermore, we have that (\ref{gammas})$ = (d^{K_1}_3)_* + (d^{K_2}_3)_* + (d^{K_1}_3)_* \circ (d^{K_2}_3)_*$.  However, by the construction that $K = K_1 \# K_2$, $d^K_3 = d^{K_1}_3 + d^{K_2}_3$; thus, the corresponding relation on homology holds.
\end{proof}

We now want to show that adding this cross-term cannot remove any of the kernel of $(d^K_3)_*$.  This will be enough to calculate $\hfboldinfty$.

\begin{proposition} \label{finalproposition} The kernel of $(d^K_3)_*$ is contained in the kernel of (\ref{gammas}).
\end{proposition}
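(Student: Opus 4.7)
The plan is to apply Lemma~\ref{finalgammaslemma} to rewrite (\ref{gammas}) as $(d^K_3)_* + (d^{K_1}_3)_* \circ (d^{K_2}_3)_*$; any $v$ with $(d^K_3)_*(v) = 0$ then lies in the kernel of (\ref{gammas}) precisely when the cross-term $(d^{K_1}_3)_* \circ (d^{K_2}_3)_*(v)$ vanishes. So the task reduces to verifying that single vanishing.

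Because $K = K_1 \# K_2$ was manufactured in Proposition~\ref{splitknots} by band-summing across a separating sphere, Theorem~\ref{connectsumadds} (together with the explicit construction identifying the Milnor invariants) gives the chain-level identity $d^K_3 = d^{K_1}_3 + d^{K_2}_3$, which descends to homology. Working over $\mathbb{F}$, the hypothesis $(d^K_3)_*(v)=0$ therefore rearranges as $(d^{K_1}_3)_*(v) = (d^{K_2}_3)_*(v)$. Substituting into the cross-term and invoking $((d^{K_1}_3)_*)^2 = 0$ --- recorded in the proof of Lemma~\ref{finalgammaslemma}, and equivalent to the identity $(\mathrm{Id} + (d^{K_1}_3)_*)^2 = \mathrm{Id}$ used there in characteristic $2$ --- one obtains
\[
(d^{K_1}_3)_* \circ (d^{K_2}_3)_*(v) \;=\; (d^{K_1}_3)_* \circ (d^{K_1}_3)_*(v) \;=\; 0,
\]
as required. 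Combining with $(d^K_3)_*(v)=0$ gives $(\ref{gammas})(v)=0$.

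The only real subtlety is bookkeeping rather than substance: on the $E_3$ page the raw map $d^{K_i}_3$ sends the $x^1$-component of $00*$ into $10*$ and annihilates the $x^1$-free component, so the composition $(d^{K_1}_3)_* \circ (d^{K_2}_3)_*$ does not literally make sense until we import the Section~\ref{gluing} identification of $H_*(00*)$, $H_*(10*)$, $H_*(01*)$ via $\Gamma^{+K_i}_* = \mathrm{Id}$, which turns both $(d^{K_i}_3)_*$ into endomorphisms of a single module. Once that identification is in hand (and it is, by Lemma~\ref{identitybox} and its companion identifying $\Gamma^{-K_i}$), the expression $((d^{K_1}_3)_*)^2$ is unambiguous and vanishes, so the three-line calculation above closes the proposition. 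I do not anticipate any genuine obstacle beyond this compatibility check.
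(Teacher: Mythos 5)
Your proof is correct and follows the same argument as the paper: both start from $(d^K_3)_*(v)=0 \Rightarrow (d^{K_1}_3)_*(v) = (d^{K_2}_3)_*(v)$ in characteristic $2$, then kill the cross-term by substituting and invoking $((d^{K_1}_3)_*)^2 = 0$. The extra remarks on the Section~\ref{gluing} identifications are useful context but do not change the substance; the paper omits them because they were already set up in the preceding lemmas.
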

\begin{proof}
Suppose that $(d^K_3)_*(x)$ is 0.  This implies $(d^{K_1}_3)_*(x) = (d^{K_2}_3)_*(x)$ by Lemma~\ref{finalgammaslemma}.  We want to see that $(d^{K_1}_3)_* \circ (d^{K_2}_3)_*(x) = 0$.  Simply calculate,
\begin{equation*}
(d^{K_1}_3)_* \circ (d^{K_2}_3)_* (x)= (d^{K_1}_3)_* \circ (d^{K_1}_3)_* (x) = ((d^{K_1}_3)^2)_*(x) = 0. \qedhere
\end{equation*}
\end{proof}

\begin{proof}[Proof of Theorem~\ref{connectsumimplies}] The key observation is that Proposition~\ref{finalproposition} implies that the rank of $\hfboldinfty(S^3_{\mathbf{0}}(L))$ is at least that of $HC^\infty(S^3_{\mathbf{0}}(L))$.
\end{proof}

\bibliographystyle{amsplain}

\bibliography{biblio}

\end{document}